\newcommand{\Z}{\mathbb{Z}} 
\newcommand{\Q}{\mathbb{Q}}
\newcommand{\F}{\mathbb{F}}
\theoremstyle{plain}
\newtheorem{theorem}{Theorem}[section]
\newtheorem{lemma}[theorem]{Lemma}
\newtheorem{corollary}[theorem]{Corollary}
\newtheorem{prop}[theorem]{Proposition}
\newtheorem{prop-def}[theorem]{Proposition / Definition}
\newtheorem{sec:hyp}[theorem]{Hypothesis}
\newtheorem{algorithm}[theorem]{Algorithm}
\newtheorem*{theorem*}{Theorem}
\theoremstyle{remark}
\newtheorem{remark}[theorem]{Remark}
\newtheorem{example}[theorem]{Example}
\newtheorem*{note*}{Note}
\newtheorem*{remark*}{Remark}
\newtheorem*{example*}{Example}
\theoremstyle{definition}
\newtheorem*{definition*}{Definition}
\newtheorem*{hypothesis*}{Hypothesis}
\newtheorem*{assumptions*}{Assumptions}
\newcommand{\Gal}{\mathrm{Gal}}
\newcommand{\disc}{\mathrm{Disc}}
\newcommand{\Cl}{\mathrm{Cl}}
\newcommand{\Aut}{\mathrm{Aut}}
\newcommand{\End}{\mathrm{End}}
\newcommand{\Mat}{\mathrm{Mat}}
\newcommand{\Hom}{\mathrm{Hom}}
\newcommand{\Ext}{\mathrm{Ext}}
\newcommand{\GL}{\mathrm{GL}}
\newcommand{\op}{\mathrm{op}}
\newcommand{\calO}{\mathcal{O}}
\newcommand{\Out}{\mathrm{Out}}
\newcommand{\Inn}{\mathrm{Inn}}
\newcommand{\OL}{{\calO_L}}
\newcommand{\sseq}{\subseteq}
\newcommand*{\rright}[1]{\multicolumn{1}{c|}{#1}}
\newcommand*{\lleft}[1]{\multicolumn{1}{|c}{#1}}
\title[Computing isomorphisms between lattices]{Computing isomorphisms between lattices}
\author{Tommy Hofmann}
\address{
Fachbereich Mathematik\\
Technische Universit\"at Kaiserslautern\\
67663 Kaiserslautern\\
Germany}
\email{thofmann@mathematik.uni-kl.de}
\urladdr{http://www.mathematik.uni-kl.de/$\sim$thofmann}
\author{Henri Johnston}
\address{
Department of Mathematics\\
University of Exeter\\
Exeter\\
EX4 4QF\\
United Kingdom
}
\email{H.Johnston@exeter.ac.uk}
\urladdr{http://emps.exeter.ac.uk/mathematics/staff/hj241}
\subjclass[2000]{11R33, 11Y40, 16Z05}
\keywords{}
\date{Version of 2nd March 2020}
\begin{document}

\begin{abstract}
Let $K$ be a number field, let $A$ be a finite-dimensional semisimple $K$-algebra and let $\Lambda$ be an $\mathcal{O}_{K}$-order in $A$. 
It was shown in previous work that, under certain hypotheses on $A$, 
there exists an algorithm that for a given (left) $\Lambda$-lattice
$X$ either computes a free basis of $X$ over $\Lambda$ or shows that $X$ is not free over $\Lambda$.
In the present article, we generalize this by showing that, under weaker hypotheses on $A$,
there exists an algorithm that for two given $\Lambda$-lattices $X$ and $Y$
either computes an isomorphism $X \rightarrow Y$ or determines that $X$ and $Y$ are not isomorphic.
The algorithm is implemented in \textsc{Magma} 
for $A=\Q[G]$, $\Lambda=\Z[G]$ and $\Lambda$-lattices $X$ and $Y$ contained in $\Q[G]$,
where $G$ is a finite group satisfying certain hypotheses.
This is used to investigate the Galois module structure of rings of integers and ambiguous ideals of tamely ramified
Galois extensions of $\Q$ with Galois group isomorphic to $Q_{8} \times C_{2}$, the direct product of the
quaternion group of order $8$ and the cyclic group of order $2$.
\end{abstract}

\maketitle

\section{Introduction}
Let $K$ be a number field with ring of integers $\mathcal{O}_{K}$.
Let $A$ be a finite-dimensional semisimple $K$-algebra and let $\Lambda$ be an $\mathcal{O}_{K}$-order in $A$.
For example, if $G$ is a finite group then the group ring $\mathcal{O}_{K}[G]$ is an order in the group algebra $K[G]$.
A $\Lambda$-lattice is a (left) $\Lambda$-module that is finitely generated and torsion-free over $\mathcal{O}_{K}$. 
In previous work \cite{Bley2008,Bley2011} of Bley and the second named author,
it was shown that, under certain hypotheses on $A$, there exists an algorithm that for a given $\Lambda$-lattice
$X$ either computes a free basis of $X$ over $\Lambda$ or shows that $X$ is not free over $\Lambda$.
In the present article, we generalize this by showing that, under the (weaker) hypotheses on $A$ discussed below,
there exists an algorithm that for two given $\Lambda$-lattices $X$ and $Y$
either computes an isomorphism $X \rightarrow Y$ or determines that $X$ and $Y$ are not isomorphic.

The key theoretical results of the present article are necessary and sufficient conditions for the existence
of an isomorphism $X \rightarrow Y$ between two given $\Lambda$-lattices (see \S \ref{sec:conditions-iso}).
One of these criteria forms the basis of the main algorithm (Algorithm \ref{alg:find-iso}).

Let $A=A_{1} \oplus \cdots \oplus A_{r}$ be the decomposition of $A$ into indecomposable two-sided ideals
and let $K_{i}$ denote the center of the simple algebra $A_{i}$.
A key step of Algorithm \ref{alg:find-iso} requires the following two hypotheses, which are discussed in detail
in \S \ref{sec:hyp}.

\begin{itemize}
\item[(H1)]
For each $i$, we can compute an explicit isomorphism $A_{i} \cong \Mat_{n_{i} \times n_{i}}(D_{i})$ of $K$-algebras,
where $D_{i}$ is a skew field with center $K_{i}$.
\item[(H2)] For each $i$, every maximal $\mathcal{O}_{K}$-order $\Delta_{i}$ in $D_{i}$ has the following properties:
\begin{enumerate}
\item we can solve the principal ideal problem for fractional left $\Delta_{i}$-ideals, and
\item $\Delta_{i}$ has the locally free cancellation property.
\end{enumerate}
\end{itemize}
The key step in question is the computation of isomorphisms of certain lattices over maximal orders in each simple component 
$A_{i}$ (see \S \ref{sec:isom-over-max}). 
Two other crucial steps are the computation of endomorphism rings (a method for the more general problem of computing homomorphism groups is given in \S \ref{sec:saturation-homgroups}) and isomorphism testing for localized lattices (see \S \ref{sec:lociso}). 
An ad hoc method for reducing the number of tests required in the last (and most expensive) step of Algorithm \ref{alg:find-iso}
is outlined in \S \ref{sec:cut-number-of-tests}. 

An important motivation for this work is the investigation of the Galois module structure of rings of integers and their ambiguous ideals.
Let $L/K$ be a finite Galois extension of number fields and let $G=\Gal(L/K)$.
The classical Normal Basis Theorem says that $L$ is free of rank $1$ as a module over the group algebra $K[G]$.
A much more difficult problem is that of determining the structure of the ring of integers $\mathcal{O}_{L}$
over an order such as $\mathcal{O}_{K}[G]$.
More generally, one can consider the structure of ambiguous ideals of $\mathcal{O}_{L}$. 
There is a very large body of work on these problems and we now mention only a small selection of results.

By far the most progress has been made in the case that $L/K$ is (at most) tamely ramified.
In this setting, it is well known that
$\mathcal{O}_{L}$ is a locally free $\mathcal{O}_{K}[G]$-lattice of rank $1$
(see \cite{Noether1932} or \cite[I, \S 3]{MR717033}). 
Thus one can consider the class $(\mathcal{O}_{L})$ in the locally free class group $\Cl(\mathcal{O}_{K}[G])$.
If $G$ is abelian or of odd order then $(\mathcal{O}_{L})$ determines $\mathcal{O}_{L}$ up to isomorphism, but for $K=\Q$ and certain non-abelian $G$ of even order, this class only determines $\mathcal{O}_{L}$ up to stable isomorphism. 
If we restrict scalars and consider $\mathcal{O}_{L}$ as a $\Z[G]$-module then we obtain a class $(\mathcal{O}_{L})_{\Z[G]}$
in $\Cl(\Z[G])$. The root number class $W_{L/K}$ in $\Cl(\Z[G])$ was defined by Ph.\ Cassou-Nogu\`es in terms 
of Artin root numbers of the irreducible symplectic characters of $G$, and Taylor's proof \cite{MR608528} of a conjecture of Fr\"ohlich
shows that $(\mathcal{O}_{L})_{\Z[G]}=W_{L/K}$ (see \cite[I]{MR717033} for an overview).
In particular, if $G$ is abelian or of odd order then $W_{L/K}$ is trivial and $\mathcal{O}_{L}$ is always free over $\Z[G]$ 
(unfortunately, this approach does not give a description of a $\Z[G]$-basis).
There are many examples of interesting behaviour when $G \cong Q_{4n}$, the quaternion group of order $4n$. 
For instance, when $K=\Q$ and $G \cong Q_{8}$, both possibilities for $W_{L/K}$ in $\Cl(\Z[Q_{8}]) \cong \{ \pm 1 \}$
occur infinitely often.
Moreover, Cougnard \cite{cougnard-stable} gave an example with $K=\Q$ and $G \cong Q_{32}$
such that $\mathcal{O}_{L}$ is stably free but not free over $\Z[G]$. 

If $L/K$ is wildly ramified then the situation becomes much more difficult because $\mathcal{O}_{L}$
is not even locally free over $\Z[G]$. One approach is to consider the structure of
$\mathcal{O}_{L}$ over the so-called associated order 
$\mathfrak{A}_{L/K}= \{ \lambda \in K[G] \mid \lambda \mathcal{O}_{L} \subseteq \mathcal{O}_{L} \}$, which is equal to 
$\mathcal{O}_{K}[G]$ if and only if $L/K$ is tamely ramified.
An important result in this setting is Leopoldt's Theorem \cite{leopoldt}, which
says that for any finite abelian extension $L/\Q$ the ring of integers $\mathcal{O}_{L}$
is always free over $\mathfrak{A}_{L/\Q}$ and, in addition, gives an explicit construction of a free generator
(also see \cite{MR1037435}). However, in general,  
$\mathcal{O}_{L}$ need not even be locally free over $\mathfrak{A}_{L/K}$. 
An alternative approach is to consider Chinburg's invariant $\Omega(L/K,2)$ in $\Cl(\Z[G])$.
This is equal to $(\mathcal{O}_{L})_{\Z[G]}$ in the tamely ramified case, but is also defined when $L/K$ is wildly ramified. 
Fr\"ohlich \cite{MR507603} generalized the definition of $W_{L/K}$ to the wildly ramified case.
Chinburg's `$\Omega(2)$ conjecture'  \cite{MR786352} asserts that $\Omega(L/K,2)=W_{L/K}$;
in the tamely ramified case this is equivalent to the aforementioned theorem of Taylor. 

Another interesting problem is the determination of the structure of the square root $\mathcal{A}_{L/K}$
of the inverse different $\mathcal{D}_{L/K}^{-1}$.  The former exists (and is unique) precisely when the latter is a square, which 
can be tested using Hilbert's formula \cite[IV, Proposition 4]{MR554237}. 
In particular, $\mathcal{A}_{L/K}$ always exists when $|G|$ is odd. 
Erez  showed that if $\mathcal{A}_{L/K}$ exists then $\mathcal{A}_{L/K}$ is locally free over $\Z[G]$
if and only if $L/K$ is weakly ramified, that is, the second ramification group of every prime is trivial 
(see \cite{MR1128708} and \cite[footnote 1]{MR3550860}). 
He also showed that if $|G|$ is odd and $L/K$ is tamely ramified then $\mathcal{A}_{L/K}$ is free over $\Z[G]$.
This result was generalized by Caputo and Vinatier \cite{MR3550860} and has been further extended in recent work of
Agboola and Caputo \cite{agboola-caputo-preprint} who showed that if $L/K$ is tamely ramified
and $\mathcal{A}_{L/K}$ exists then $\Omega(L/K,2)=(\mathcal{O}_{L})_{\Z[G]}=(\mathcal{A}_{L/K})_{\Z[G]}$ in $\Cl(\Z[G])$.
Moreover, Erez has asked whether $\Omega(L/K,2)=(\mathcal{A}_{L/K})_{\Z[G]}$ for every weakly ramified extension $L/K$
such that $\mathcal{A}_{L/K}$ exists (see \cite[Question 2]{MR3550860}); under the assumption of Chinburg's $\Omega(2)$ conjecture this may been seen as a generalization of a question of Vinatier \cite{MR2019023}.

We now review previous work on related algorithms.
Let $K$ be a number field, let $A$ be a finite-dimensional semisimple $K$-algebra and let 
$\Lambda$ be an $\mathcal{O}_{K}$-order in $A$.
In the case that $A$ is commutative and $X$ is a $\Lambda$-lattice such that 
$K \otimes_{\mathcal{O}_{K}} X$ is free of rank $1$ over $A$, Bley \cite{Bley1997} gave algorithms that determine whether $X$ is locally free over $\Lambda$,
and either explicitly construct a free generator for $X$ over $\Lambda$ or show that no such generator exists
(though stated for a specific arithmetic case, it is straightforward to see that the algorithms work in this more general setting).
A noncommutative higher-rank generalization was given by Bley and the second named author \cite{Bley2008,Bley2011}, and it is this work that is in turn generalized in the present article.
In \cite{bley-endres}, Bley and Endres again considered the case in which $A$ is commutative and 
gave algorithms for computing the Picard group
$\mathrm{Pic}(\Lambda)$ and solving the corresponding refined discrete logarithm problem
(and thus for computing isomorphisms between invertible $\Lambda$-submodules of $A$).
In the case that $A=K[G]$ for some finite group $G$, 
Bley and Wilson \cite{Bley2009} gave algorithms for computing the relative algebraic $K$-group
$K_{0}(\mathcal{O}_{K}[G],K)$ and solving the discrete logarithm problem in both $K_{0}(\mathcal{O}_{K}[G]),K)$ and the locally free class group $\Cl(\mathcal{O}_{K}[G])$
(note that this is a lot weaker than the \emph{refined} discrete logarithm problem and can only be used to determine whether two locally free $\mathcal{O}_{K}[G]$-lattices are stably isomorphic). 

The main algorithm of the present article (Algorithm \ref{alg:find-iso}) is very general: it is only subject to the hypotheses (H1) and (H2)
and does not require the order $\Lambda$
to be commutative or the lattices to be locally free, for example. Moreover, not only does it determine whether two lattices are isomorphic
(rather than just stably isomorphic), but it also explicitly computes an isomorphism, if one exists. 

In \S \ref{sec:exp}, we discuss experimental results obtained by using a proof of concept implementation of 
Algorithm \ref{alg:find-iso} in \textsc{Magma}~\cite{Wieb1997} for $A=\Q[G]$, $\Lambda=\Z[G]$ and $\Lambda$-lattices $X$ and $Y$ contained in $\Q[G]$, where $G$ is a finite group satisfying certain hypotheses.
We now fix $G=Q_{8} \times C_{2}$,
the direct product of the quaternion group of order $8$ and the cyclic group of order $2$,
and remark that this satisfies the hypotheses required by the implementation.
Moreover, Swan \cite{Swan1983} showed there exist $\Z[G]$-lattices that are stably free but not free, 
but that for any group $H$ with $|H|<16$, every stably free $\Z[H]$-lattice is in fact free.
Using Swan's results, Cougnard \cite{cougnard-H8-C2} gave examples of tamely ramified Galois extensions $L/\Q$ with $\Gal(L/\Q) \cong G$ such that 
$\mathcal{O}_{L}$ is stably free but not free over $\Z[\Gal(L/\Q)]$.
Using the implementation of Algorithm \ref{alg:find-iso}, we find the extension of smallest absolute discriminant with this property.
For fixed tamely ramified Galois extensions $L/\Q$ with $\Gal(L/\Q) \cong G$,
we then examine the distribution of isomorphism classes of ambiguous ideals of $\mathcal{O}_{L}$. 

\subsection*{Acknowledgments}
The authors are indebted to Werner Bley for suggesting this project and for his help in drafting an early version of \S \ref{subsec:res-iso-max}.
The authors also wish to thank: 
Alex Bartel for pointing out a mistake in an earlier version of this article;
Nigel Byott for several helpful conversations and comments; 
Gunter Malle for numerous helpful comments; and the referee for a thorough and thoughtful report.
The first named author was supported by Project II.2 of SFB-TRR 195 `Symbolic Tools in
Mathematics and their Application'
of the German Research Foundation (DFG).
The second named author was supported by EPSRC First Grant EP/N005716/1 `Equivariant Conjectures in Arithmetic'.

\section{Preliminaries on lattices and orders}\label{sec:prelims-on-lattices-and-orders}

For further background, we refer the reader to \cite{curtisandreiner_vol1,Reiner2003}.
Let $\mathcal{O}$ be a Dedekind domain with field of fractions $K$.
To avoid trivialities, we assume that $\mathcal{O} \neq K$.

\subsection{Lattices over Dedekind domains}
An $\mathcal{O}$-lattice $M$ is a finitely generated torsion-free $\mathcal{O}$-module, or equivalently,
a finitely generated projective $\mathcal{O}$-module.
For a prime ideal $\mathfrak{p}$ of $\mathcal{O}$, we
let $\mathcal{O}_{\mathfrak{p}}$ denote the localization (not completion) of $\mathcal{O}$ at $\mathfrak{p}$.
We define the localization of $M$ at $\mathfrak{p}$ to be 
the $\mathcal{O}_{\mathfrak{p}}$-lattice $M_{\mathfrak{p}} := \mathcal{O}_{\mathfrak{p}} \otimes_{\mathcal{O}} M$.
We can identify $M$ with its image $1 \otimes M$ in $\mathcal{O}_{\mathfrak{p}} \otimes_{\mathcal{O}} M$ and thus
may view $M$ as embedded in $M_{\mathfrak{p}}$. 
In particular, we may view $M$ as a finitely generated
$\mathcal{O}$-submodule of the finite-dimensional $K$-vector space $K \otimes_{\mathcal{O}} M$,
which we may write in the simpler form
\[
KM := \{ \alpha_{1} m_{1} + \cdots + \alpha_{r} m_{r} \mid r \in \Z_{\geq 0}, \alpha_{i} \in K, m_{i} \in M \}.
\]
Then for any choice of $\mathfrak{p}$, we may identify $M_{\mathfrak{p}}$ with
\[
\mathcal{O}_{\mathfrak{p}}M := 
\{ \alpha_{1} m_{1} + \cdots + \alpha_{r} m_{r} \mid r \in \Z_{\geq 0}, \alpha_{i} \in \mathcal{O}_{\mathfrak{p}}, m_{i} \in M \}.
\]

Alternatively, given a finite-dimensional $K$-vector space $V$, we can define an $\mathcal{O}$-lattice $M$
in $V$ to be a finitely generated $\mathcal{O}$-submodule of $V$. 
Then $KM$ as defined above is a $K$-vector subspace of $V$ and we say that $M$ is a full $\mathcal{O}$-lattice in $V$ if $K M=V$.
Moreover, for each $\mathfrak{p}$ we define $M_{\mathfrak{p}}$ to be $\mathcal{O}_{\mathfrak{p}}M$ as above and note that this is
an $\mathcal{O}_{\mathfrak{p}}$-lattice in $V$. We shall switch between the two ways of viewing $\mathcal{O}$-lattices as convenient.

\subsection{Lattices over orders}
Let $A$ be a separable $K$-algebra, that is, a finite-dimensional semisimple $K$-algebra
such that the center of each simple component of $A$ is a separable field extension of $K$.
An $\mathcal{O}$-order in $A$ is a subring $\Lambda$ of $A$ (so in particular has the same unity element as $A$) such that
$\Lambda$ is a full $\mathcal{O}$-lattice in $A$.
Note that $\Lambda$ is both left and right noetherian, since $\Lambda$ is finitely generated over $\mathcal{O}$. 
A left $\Lambda$-lattice $X$ is a left $\Lambda$-module that is also an $\mathcal{O}$-lattice; in this case, $KX$ may be viewed as a left $A$-module.

Henceforth all modules (resp.\ lattices) shall be assumed to be left modules (resp.\ lattices) unless otherwise stated.
Two $\Lambda$-lattices are said to be isomorphic if they are isomorphic as $\Lambda$-modules.
For any maximal ideal $\mathfrak{p}$ of $\mathcal{O}$, the localization $\Lambda_{\mathfrak{p}}$ is an 
$\mathcal{O}_{\mathfrak{p}}$-order in $A$. Moreover, localizing a $\Lambda$-lattice $X$ yields
a $\Lambda_{\mathfrak{p}}$-lattice $X_{\mathfrak{p}}$.  
Two $\Lambda$-lattices $X$ and $Y$ are said to be locally isomorphic 
(or in the same genus) if and only if the
$\Lambda_{\mathfrak{p}}$-lattices $X_{\mathfrak{p}}$ and $Y_{\mathfrak{p}}$ are isomorphic
for all maximal ideals $\mathfrak{p}$ of $\mathcal{O}$. 

\begin{lemma}\label{lem:ext}
Let $\Lambda \subseteq \Lambda'$ be $\mathcal{O}$-orders in $A$ and 
let $\Gamma$ be either $\Lambda'$ or $\Lambda'_{\mathfrak{p}}$ for some maximal ideal $\mathfrak{p}$ of $\mathcal{O}$.
Let $f \colon X \to Y$ be a homomorphism of $\Lambda$-lattices.
Then the following hold.
\begin{enumerate}
\item The $\Gamma$-module $\Gamma X$ generated by $X$ is in fact a $\Gamma$-lattice. Similarly for $\Gamma Y$.
\item There exists a unique homomorphism of $A$-modules $f^{A} \colon KX \to KY$ extending $f$.
\item There exists a unique homomorphism of $\Gamma$-lattices $f^\Gamma \colon \Gamma X \to \Gamma Y$ extending $f$.
\item If $f$ is injective (resp.\ surjective), then $f^{A}$ and $f^{\Gamma}$ are injective (resp.\ surjective).
\end{enumerate}
\end{lemma}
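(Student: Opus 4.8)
The plan is to deduce everything from the fact that $\Gamma$ is a flat $\Lambda$-module (being either $\Lambda'$, a ring extension of $\Lambda$ that is $\mathcal{O}$-projective, or the localization $\Lambda'_{\mathfrak{p}}$), together with the elementary observation that $\Gamma X = \Gamma \otimes_{\Lambda} X$ via the natural map, at least after identifying both inside $KX$. For part (a), note that $\Gamma X$ is a finitely generated $\Gamma$-module because $X$ is finitely generated over $\Lambda$ and $\Gamma$ is finitely generated over $\mathcal{O}$ (or over $\mathcal{O}_{\mathfrak{p}}$); it is torsion-free over $\mathcal{O}$ (resp. $\mathcal{O}_{\mathfrak{p}}$) because it embeds in the $K$-vector space $KX$, which contains no $\mathcal{O}$-torsion. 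Hence $\Gamma X$ is an $\mathcal{O}$-lattice (resp. $\mathcal{O}_{\mathfrak{p}}$-lattice) and therefore a $\Gamma$-lattice; the same argument applies verbatim to $\Gamma Y$.

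For part (b), extend $f$ by $K$-linearity: every element of $KX$ is an $\mathcal{O}$-linear combination of elements of $X$ divided by a scalar, so the assignment $f^{A}(\alpha m) = \alpha f(m)$ for $\alpha \in K$, $m \in X$ is forced and is easily checked to be well-defined and $A$-linear (since $f$ is $\Lambda$-linear and $A = KX$-scalars commute appropriately with $\Lambda$); uniqueness is immediate because $X$ spans $KX$ over $K$. For part (c), one restricts $f^{A}$: since $f^{A}(X) = f(X) \subseteq Y \subseteq \Gamma Y$ and $f^{A}$ is $K$-linear, $f^{A}(\Gamma X) \subseteq \Gamma \cdot f^{A}(X) \subseteq \Gamma Y$, so $f^{\Gamma} := f^{A}|_{\Gamma X}$ is a well-defined $\Gamma$-linear map $\Gamma X \to \Gamma Y$ extending $f$. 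Uniqueness follows because $X$ generates $\Gamma X$ as a $\Gamma$-module and two $\Gamma$-linear maps agreeing on generators agree.

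For part (d), if $f$ is injective then $f^{A}$ is injective: this is where flatness of $K$ over $\mathcal{O}$ (i.e. exactness of $- \otimes_{\mathcal{O}} K$, localization) is used, since $f^{A} = K \otimes_{\mathcal{O}} f$ under the identification $KX = K \otimes_{\mathcal{O}} X$. Then $f^{\Gamma}$, being the restriction of the injective map $f^{A}$ to a subset, is injective. For surjectivity: if $f$ is surjective then $f(X) = Y$, so $f^{\Gamma}(\Gamma X) = \Gamma f(X) = \Gamma Y$, giving surjectivity of $f^{\Gamma}$; applying $K \otimes_{\mathcal{O}} -$ (exact) then shows $f^{A}$ is surjective as well. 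The only mild subtlety — the step I expect to warrant the most care — is making the identifications $KX \supseteq \Gamma X \supseteq X$ precise and compatible for the two possible choices of $\Gamma$, and checking that "$\Gamma X$ generated by $X$" inside $KX$ really coincides with the image of the abstract module $\Gamma \otimes_{\Lambda} X$; this is routine but should be spelled out, after which all four parts follow as above.
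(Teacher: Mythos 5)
Your executed argument is correct and is essentially the proof the paper has in mind: show $\Gamma X$ is finitely generated and torsion-free over $\mathcal{O}$ (resp.\ $\mathcal{O}_{\mathfrak{p}}$), extend $f$ to $KX$ by $K$-linearity, restrict $f^{A}$ to $\Gamma X$, get injectivity of $f^{A}$ from exactness of $-\otimes_{\mathcal{O}}K$ (equivalently, from $\ker(f)$ spanning $\ker(f^{A})$ over $K$), and get surjectivity from $f^{\Gamma}(\Gamma X)=\Gamma f(X)$ and the definitions of $KY$ and $\Gamma Y$.

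However, you should delete the framing claim in your opening sentence: it is \emph{not} true in general that $\Gamma=\Lambda'$ is flat over $\Lambda$ (being projective over $\mathcal{O}$ says nothing about flatness over $\Lambda$; e.g.\ a maximal order containing $\Z[G]$ is usually not flat as a $\Z[G]$-module), nor is the natural map $\Gamma\otimes_{\Lambda}X\to KX$ injective in general, so one cannot identify $\Gamma X$ with $\Gamma\otimes_{\Lambda}X$ --- only with its image, as you correctly say in your closing sentence. Fortunately nothing in your parts (a)--(d) uses flatness of $\Gamma$ over $\Lambda$ or that identification; all you need is that $\Gamma X$ is the $\Gamma$-submodule of $KX$ generated by $X$, which is how the paper works as well. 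One further small wording fix: in (c), the containment $f^{A}(\Gamma X)\subseteq\Gamma f^{A}(X)$ follows from $A$-linearity (or $\Gamma$-linearity, since $\Gamma\subseteq A$ in both cases) of $f^{A}$, not from $K$-linearity alone; since you established $A$-linearity in (b), this is only a matter of citing the right property.
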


\begin{proof}
This is straightforward. The key points are to
(a) check that $\Gamma X$ is in fact finitely generated over either $\mathcal{O}$ or $\mathcal{O}_{\mathfrak{p}}$ as appropriate;
(b) extend $f$ to $KX$ using $K$-linearity; (c) restrict $f^{A}$ to $\Gamma X$; (d) (injectivity) check that $\ker(f)$ is an $\mathcal{O}$-lattice of full rank in $\ker(f^{A})$; and (d) (surjectivity) use the definitions of $K Y$ and $\Gamma Y$.
\end{proof}

\section{Conditions for two lattices to be isomorphic}\label{sec:conditions-iso}

\subsection{Restricting isomorphisms over maximal orders}\label{subsec:res-iso-max}
Let $\mathcal{O}$ be a Dedekind domain with field of fractions $K$ and assume that $\mathcal{O} \neq K$.
Let $\Lambda$ be an $\mathcal{O}$-order in a separable $K$-algebra $A$.
By \cite[(10.4)]{Reiner2003} there exists a (not necessarily unique) maximal $\mathcal{O}$-order $\mathcal{M}$
such that $\Lambda \subseteq \mathcal{M} \subseteq A$.
It is clear that if $X$ and $Y$ are isomorphic $\Lambda$-lattices then they are locally isomorphic and 
$\mathcal M X$ and $\mathcal MY$ are isomorphic $\mathcal M$-lattices.
We now investigate the additional conditions under which a converse holds.
In doing so, we generalize the results of \cite[\S 2]{Bley2008},
where necessary and sufficient conditions were given for a $\Lambda$-lattice to be free.

\begin{theorem}\label{thm:restrict-from-iso-over-max-order}
Let $X$ and $Y$ be locally isomorphic $\Lambda$-lattices such that there exists
an isomorphism of $\mathcal{M}$-lattices $f \colon \mathcal{M} X \to \mathcal{M} Y$.
Then $f$ restricts to an isomorphism $f|_X \colon X \to Y$ of $\Lambda$-lattices if and only if $f(X) \subseteq Y$.
\end{theorem}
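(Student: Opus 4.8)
The forward implication is immediate: if $f$ restricts to an isomorphism $f|_X\colon X\to Y$, then $f(X)=Y\subseteq Y$. So suppose conversely that $f(X)\subseteq Y$. As $\Lambda\subseteq\mathcal{M}$, the $\mathcal{M}$-linear isomorphism $f$ is in particular $\Lambda$-linear and injective, so $f|_X\colon X\to Y$ is an injective homomorphism of $\Lambda$-lattices; the task is to show it is surjective, i.e.\ that $f(X)=Y$.

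The strategy is to reduce this to a local assertion and then bring in the genus hypothesis. By Lemma~\ref{lem:ext}, $f$ extends to an isomorphism of $A$-modules $f^A\colon KX\to KY$, so $f(X)$ is a full $\Lambda$-lattice in $KY$; hence $Y/f(X)$ is a finitely generated torsion $\mathcal{O}$-module, which vanishes if and only if its localization at every maximal ideal $\mathfrak{p}$ of $\mathcal{O}$ vanishes. Thus it suffices to prove $f(X)_{\mathfrak{p}}=Y_{\mathfrak{p}}$ for each $\mathfrak{p}$. Localizing, and using $(\mathcal{M}X)_{\mathfrak{p}}=\mathcal{M}_{\mathfrak{p}}X_{\mathfrak{p}}$, the map $f$ becomes an isomorphism of $\mathcal{M}_{\mathfrak{p}}$-lattices $f_{\mathfrak{p}}\colon\mathcal{M}_{\mathfrak{p}}X_{\mathfrak{p}}\to\mathcal{M}_{\mathfrak{p}}Y_{\mathfrak{p}}$ with $f_{\mathfrak{p}}(X_{\mathfrak{p}})\subseteq Y_{\mathfrak{p}}$, and I must show equality holds here.

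Now comes the main step. Using that $X$ and $Y$ are in the same genus, I would choose an isomorphism of $\Lambda_{\mathfrak{p}}$-lattices $g\colon Y_{\mathfrak{p}}\xrightarrow{\sim}X_{\mathfrak{p}}$ and set $h:=f_{\mathfrak{p}}\circ g$, an \emph{injective} $\Lambda_{\mathfrak{p}}$-endomorphism of $Y_{\mathfrak{p}}$ with image $h(Y_{\mathfrak{p}})=f_{\mathfrak{p}}(X_{\mathfrak{p}})$. By Lemma~\ref{lem:ext}, $h$ extends to an $\mathcal{M}_{\mathfrak{p}}$-linear endomorphism $\psi$ of $L:=\mathcal{M}_{\mathfrak{p}}Y_{\mathfrak{p}}$; it is injective by Lemma~\ref{lem:ext} and surjective because its image is $\mathcal{M}_{\mathfrak{p}}h(Y_{\mathfrak{p}})=\mathcal{M}_{\mathfrak{p}}f_{\mathfrak{p}}(X_{\mathfrak{p}})=f_{\mathfrak{p}}(\mathcal{M}_{\mathfrak{p}}X_{\mathfrak{p}})=L$. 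Hence $\psi$ is an automorphism of the $\mathcal{O}_{\mathfrak{p}}$-lattice $L$. Since $\mathcal{O}_{\mathfrak{p}}$ is a discrete valuation ring, $L$ is free of finite rank over $\mathcal{O}_{\mathfrak{p}}$, so $\det_{\mathcal{O}_{\mathfrak{p}}}(\psi)\in\mathcal{O}_{\mathfrak{p}}^{\times}$, and the Cayley--Hamilton theorem exhibits $\psi^{-1}$ as an $\mathcal{O}_{\mathfrak{p}}$-linear combination of non-negative powers of $\psi$. Since $\psi(Y_{\mathfrak{p}})=f_{\mathfrak{p}}(X_{\mathfrak{p}})\subseteq Y_{\mathfrak{p}}$, every non-negative power of $\psi$ maps $Y_{\mathfrak{p}}$ into itself, and therefore so does $\psi^{-1}$; thus $\psi^{-1}(Y_{\mathfrak{p}})\subseteq Y_{\mathfrak{p}}$, and applying $\psi$ gives $Y_{\mathfrak{p}}\subseteq\psi(Y_{\mathfrak{p}})=f_{\mathfrak{p}}(X_{\mathfrak{p}})$. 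Combined with $f_{\mathfrak{p}}(X_{\mathfrak{p}})\subseteq Y_{\mathfrak{p}}$, this gives $f_{\mathfrak{p}}(X_{\mathfrak{p}})=Y_{\mathfrak{p}}$, as required.

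The reduction to the local statement is routine but calls for some care in identifying $(\mathcal{M}X)_{\mathfrak{p}}$ with $\mathcal{M}_{\mathfrak{p}}X_{\mathfrak{p}}$ and the restriction $f_{\mathfrak{p}}|_{X_{\mathfrak{p}}}$ with the localization of $f|_X$; the real content is the local step, and its crux — which is easy to miss — is that the genus hypothesis is precisely what converts the inclusion $f_{\mathfrak{p}}(X_{\mathfrak{p}})\hookrightarrow Y_{\mathfrak{p}}$ into an \emph{endomorphism} of $Y_{\mathfrak{p}}$, after which the discrete valuation ring structure forces that endomorphism to be bijective. (Without the genus hypothesis the statement is false.) As an alternative to Cayley--Hamilton, once $\psi$ is an automorphism of $L$ with $\psi(Y_{\mathfrak{p}})\subseteq Y_{\mathfrak{p}}$ one can note that the ascending chain $Y_{\mathfrak{p}}\subseteq\psi^{-1}(Y_{\mathfrak{p}})\subseteq\psi^{-2}(Y_{\mathfrak{p}})\subseteq\cdots$ lies inside the Noetherian $\mathcal{O}_{\mathfrak{p}}$-module $L=\psi^{-1}(L)$ and so must stabilize, again forcing $\psi(Y_{\mathfrak{p}})=Y_{\mathfrak{p}}$.
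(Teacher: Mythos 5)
Your argument is correct, and its skeleton is the same as the paper's: reduce to showing $f(X)_{\mathfrak{p}} = Y_{\mathfrak{p}}$ at every maximal ideal $\mathfrak{p}$, bring in a $\Lambda_{\mathfrak{p}}$-isomorphism between $X_{\mathfrak{p}}$ and $Y_{\mathfrak{p}}$ supplied by the genus hypothesis, and extend over $\mathcal{M}_{\mathfrak{p}}$ via Lemma~\ref{lem:ext} so that it can be compared with the localization of $f$. The difference lies only in how the comparison is exploited. The paper forms the automorphism $(f_{\mathfrak{p}}^{\mathcal{M}_{\mathfrak{p}}})^{-1} \circ f^{\mathcal{M}_{\mathfrak{p}}}$ of $\mathcal{M}_{\mathfrak{p}}X_{\mathfrak{p}}$ and evaluates the module index $[Y_{\mathfrak{p}} : f(X)_{\mathfrak{p}}]_{\mathcal{O}_{\mathfrak{p}}}$ as its determinant ideal, which is the unit ideal; triviality of the index at every $\mathfrak{p}$, combined with $f(X) \subseteq Y$, gives $f(X) = Y$. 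You instead precompose the localized $f$ with the local isomorphism to get an injective $\Lambda_{\mathfrak{p}}$-endomorphism of $Y_{\mathfrak{p}}$, extend it to an automorphism $\psi$ of $\mathcal{M}_{\mathfrak{p}}Y_{\mathfrak{p}}$ satisfying $\psi(Y_{\mathfrak{p}}) = f(X)_{\mathfrak{p}} \subseteq Y_{\mathfrak{p}}$, and force equality by Cayley--Hamilton or by the stabilizing chain $Y_{\mathfrak{p}} \subseteq \psi^{-1}(Y_{\mathfrak{p}}) \subseteq \psi^{-2}(Y_{\mathfrak{p}}) \subseteq \cdots$ inside the Noetherian $\mathcal{O}_{\mathfrak{p}}$-module $\mathcal{M}_{\mathfrak{p}}Y_{\mathfrak{p}}$. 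What your version buys is self-containedness: it avoids the module-index formalism entirely (and, in the chain variant, determinants as well); what the paper's version buys is brevity, since the index computation settles the local equality in one display once the cited properties of the index are granted. Your surjectivity check for $\psi$ (via $\mathcal{M}_{\mathfrak{p}}f(X)_{\mathfrak{p}} = f^{\mathcal{M}_{\mathfrak{p}}}(\mathcal{M}_{\mathfrak{p}}X_{\mathfrak{p}}) = \mathcal{M}_{\mathfrak{p}}Y_{\mathfrak{p}}$) and your appeal to Lemma~\ref{lem:ext} over the base ring $\mathcal{O}_{\mathfrak{p}}$ are both sound, and your identification of $(\mathcal{M}X)_{\mathfrak{p}}$ with $\mathcal{M}_{\mathfrak{p}}X_{\mathfrak{p}}$ is exactly the one the paper makes.
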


\begin{remark}\label{rmk:reverse-inclusion}
The condition $f(X) \subseteq Y$ can be replaced with $Y \subseteq f(X)$ by making obvious changes to the proof below.
The corollaries that follow can be rephrased analogously.
\end{remark}

\begin{proof}[Proof of Theorem \ref{thm:restrict-from-iso-over-max-order}]
One direction is trivial.
For the other, suppose that $f(X) \subseteq Y$. 
The restriction $f|_{X}$ is clearly an injective $\Lambda$-lattice homomorphism, so it only remains to show that $f(X)=Y$.

Let $\mathfrak{p}$ be a maximal ideal of $\calO$ and let $f_{\mathfrak{p}} \colon X_{\mathfrak{p}} \to Y_{\mathfrak{p}}$ be an isomorphism of $\Lambda_{\mathfrak{p}}$-lattices.
By Lemma~\ref{lem:ext} there exist unique $\mathcal{M}_{\mathfrak{p}}$-lattice isomorphisms
$f^{\mathcal{M}_{\mathfrak{p}}} \colon \mathcal{M}_{\mathfrak{p}} X \to \mathcal{M}_{\mathfrak{p}} Y$ and  $f_{\mathfrak{p}}^{\mathcal{M}_{\mathfrak{p}}} \colon \mathcal{M}_{\mathfrak{p}} X_{\mathfrak{p}} \to \mathcal{M}_{\mathfrak{p}} Y_{\mathfrak{p}}$ extending $f$ and $f_{\mathfrak{p}}$, respectively.
Note that $\mathcal{M}_{\mathfrak{p}}X = \mathcal{M}_{\mathfrak{p}}X_{\mathfrak{p}}$ and $\mathcal{M}_{\mathfrak{p}}Y = \mathcal{M}_{\mathfrak{p}}Y_{\mathfrak{p}}$
and so we can and do consider $f^{\mathcal{M}_{\mathfrak{p}}}$ as a map
$\mathcal{M}_{\mathfrak{p}} X_{\mathfrak{p}} \to \mathcal{M}_{\mathfrak{p}} Y_{\mathfrak{p}}$.

Observe that we have the following equalities and containment:
\[
f^{\mathcal{M}_{\mathfrak{p}}}_{\mathfrak{p}}(X_{\mathfrak{p}}) = f_{\mathfrak{p}}(X_{\mathfrak{p}}) = Y_{\mathfrak{p}} \supseteq f(X)_{\mathfrak{p}} = \mathcal{O}_{\mathfrak{p}} f(X) = \mathcal{O}_{\mathfrak{p}} f^{\mathcal{M}_{\mathfrak{p}}}(X) = f^{\mathcal{M}_{\mathfrak{p}}}(\mathcal{O}_{\mathfrak{p}} X) = f^{\mathcal{M}_{\mathfrak{p}}}(X_{\mathfrak{p}}).
\]
Thus if  $[ - : - ]_{\mathcal{O}_{\mathfrak{p}}}$ denotes the module index (see \cite[\S 3]{Frohlich1965} or \cite[II.4]{ft}) then we have
\begin{align*}
[ Y_{\mathfrak{p}} : f(X)_{\mathfrak{p}} ] _{\mathcal{O}_{\mathfrak{p}}}
&= [ Y_{\mathfrak{p}} : f^{\mathcal{M}_{\mathfrak{p}}}(X_{\mathfrak{p}}) ]_{\mathcal{O}_{\mathfrak{p}}}
= [ f_{\mathfrak{p}}^{\mathcal{M}_{\mathfrak{p}}}(X_{\mathfrak{p}}) : (f_{\mathfrak{p}}^{\mathcal{M}_{\mathfrak{p}}} \circ (f_{\mathfrak{p}}^{\mathcal{M}_{\mathfrak{p}}})^{-1} \circ f^{\mathcal{M}_{\mathfrak{p}}})(X_{\mathfrak{p}}) ]_{\mathcal{O}_{\mathfrak{p}}} \\
&= [ X_{\mathfrak{p}} : ((f_{\mathfrak{p}}^{\mathcal{M}_{\mathfrak{p}}})^{-1} \circ f^{\mathcal{M}_{\mathfrak{p}}})(X_{\mathfrak{p}}) ] _{\mathcal{O}_{\mathfrak{p}}}
= \mathrm{det}_{\mathcal{O}_{\mathfrak{p}}}((f_{\mathfrak{p}}^{\mathcal{M}_{\mathfrak{p}}})^{-1} \circ f^{\mathcal{M}_{\mathfrak{p}}})\mathcal{O}_{\mathfrak{p}}.
\end{align*}
Since $(f_{\mathfrak{p}}^{\mathcal{M}_{\mathfrak{p}}})^{-1} \circ f^{\mathcal{M}_{\mathfrak{p}}} : \mathcal{M}_{\mathfrak{p}} X_{\mathfrak{p}} \to \mathcal{M}_{\mathfrak{p}} X_{\mathfrak{p}}$
is an automorphism of $\mathcal{M}_{\mathfrak{p}}$-lattices and thus of $\mathcal{O}_{\mathfrak{p}}$-lattices, we must have 
$\mathrm{det}_{\mathcal{O}_{\mathfrak{p}}}((f_{\mathfrak{p}}^{\mathcal{M}_{\mathfrak{p}}})^{-1} \circ f^{\mathcal{M}_{\mathfrak{p}}}) \in \mathcal{O}_{\mathfrak{p}}^{\times}$
and hence $[ Y_{\mathfrak{p}} : f(X)_{\mathfrak{p}} ] _{\mathcal{O}_{\mathfrak{p}}} = \mathcal{O}_{\mathfrak{p}}$.
Since this holds for every maximal ideal $\mathfrak{p}$ of $\calO$ and $f(X) \subseteq Y$ by assumption, we must have $f(X) = Y$.
Therefore $f|_{X} \colon X \to Y$ is an isomorphism of $\Lambda$-lattices.
\end{proof}

For an $\mathcal{M}$-lattice $N$, let $\End_{\mathcal{M}}(N)$ denote the ring of
$\mathcal{M}$-endomorphisms of $N$, and let $\Aut_{\mathcal{M}}(N)$ be the group of 
$\mathcal{M}$-automorphisms of $N$. 
Note that $\Aut_{\mathcal{M}}(N) = \End_{\mathcal{M}}(N)^{\times}$.

\begin{corollary}\label{cor:nonalg}
Two $\Lambda$-lattices $X$ and $Y$ are isomorphic if and only if
\begin{enumerate}
\item $X$ and $Y$ are locally isomorphic,
\item there exists an isomorphism of $\mathcal{M}$-lattices
$f \colon \mathcal{M}X \rightarrow \mathcal{M}Y$, and 
\item there exists an automorphism
$g \in \Aut_{\mathcal{M}}(\mathcal{M}Y)$ such that $(g \circ f)(X) \subseteq Y$.
\end{enumerate}
Further, when this is the case, an isomorphism is given by $(g \circ f) \colon X \rightarrow Y$.
\end{corollary}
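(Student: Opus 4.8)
The plan is to derive this corollary almost directly from Theorem~\ref{thm:restrict-from-iso-over-max-order}, the point being simply that the composite $g \circ f$ is again an isomorphism of $\mathcal{M}$-lattices $\mathcal{M}X \to \mathcal{M}Y$, so it is eligible to play the role of the map called $f$ in that theorem. I would prove the two implications separately.

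For the forward direction, suppose $\phi \colon X \to Y$ is an isomorphism of $\Lambda$-lattices. Localizing $\phi$ at each maximal ideal $\mathfrak{p}$ of $\mathcal{O}$ shows that $X$ and $Y$ are locally isomorphic, giving~(a). Next I would apply Lemma~\ref{lem:ext}(c)--(d) to extend $\phi$ to a homomorphism $\phi^{\mathcal{M}} \colon \mathcal{M}X \to \mathcal{M}Y$ of $\mathcal{M}$-lattices and to see that it is again an isomorphism; taking $f = \phi^{\mathcal{M}}$ gives~(b). Finally, with this choice of $f$ one has $(\mathrm{id}_{\mathcal{M}Y} \circ f)(X) = \phi(X) = Y \subseteq Y$, so~(c) holds with $g = \mathrm{id}_{\mathcal{M}Y}$.

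For the converse, assume~(a), (b) and~(c), and set $h := g \circ f \colon \mathcal{M}X \to \mathcal{M}Y$. Since $f$ is an isomorphism of $\mathcal{M}$-lattices and $g \in \Aut_{\mathcal{M}}(\mathcal{M}Y)$, the composite $h$ is also an isomorphism of $\mathcal{M}$-lattices $\mathcal{M}X \to \mathcal{M}Y$. By~(a) the lattices $X$ and $Y$ are locally isomorphic, and by~(c) we have $h(X) \subseteq Y$; these are exactly the hypotheses of Theorem~\ref{thm:restrict-from-iso-over-max-order} with $h$ in place of $f$. Hence $h$ restricts to an isomorphism $h|_{X} \colon X \to Y$ of $\Lambda$-lattices, so $X \cong Y$, and moreover $h|_{X} = (g \circ f)|_{X}$ is the desired explicit isomorphism, establishing the final assertion.

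I do not expect any genuine obstacle here: the entire content sits in Theorem~\ref{thm:restrict-from-iso-over-max-order}, and the only things to be careful about are purely bookkeeping — invoking Lemma~\ref{lem:ext} to justify that $\mathcal{M}X$, $\mathcal{M}Y$ are $\mathcal{M}$-lattices and that the extension $\phi^{\mathcal{M}}$ exists and is an isomorphism, and noting that composing with the automorphism $g$ preserves the property of being an $\mathcal{M}$-lattice isomorphism so that the theorem applies verbatim to $g \circ f$.
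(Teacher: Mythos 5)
Your proposal is correct and follows the paper's proof essentially verbatim: the converse applies Theorem~\ref{thm:restrict-from-iso-over-max-order} to the $\mathcal{M}$-lattice isomorphism $g \circ f$, and the forward direction takes $f := h^{\mathcal{M}}$ via Lemma~\ref{lem:ext} and $g := \mathrm{Id}_{\mathcal{M}Y}$, exactly as in the paper.
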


\begin{proof}
If (a), (b) and (c) hold then $X$ and $Y$ are isomorphic by Theorem \ref{thm:restrict-from-iso-over-max-order}.
Suppose conversely that there exists a $\Lambda$-lattice isomorphism $h : X \rightarrow Y$.
Then (a) clearly holds, (b) holds with $f:= h^{\mathcal{M}}$ by Lemma \ref{lem:ext} and (c) holds with
$g:=\mathrm{Id}_{\mathcal{M}Y}$ (the identity map on $\mathcal{M}Y$).
\end{proof}

Most of the following notation is adopted from \cite{bley-boltje}.
Denote the center of a ring $R$ by $Z(R)$. 
Set $C=Z(A)$ and let $\mathcal{O}_{C}$ be the
integral closure of $\mathcal{O}$ in $C$.
Let $e_{1}, \ldots, e_{r}$ be the primitive idempotents
of $C$ and set $A_{i} = e_{i}A$. Then
\begin{equation}\label{eq:weddecomp}
A = A_{1} \oplus \cdots \oplus A_{r}
\end{equation}
is a decomposition of $A$ into indecomposable two-sided ideals (see \cite[(3.22)]{curtisandreiner_vol1}).
Each $A_{i}$ is a simple $K$-algebra with identity element $e_{i}$.
The centers $K_{i} := Z(A_{i})$ are finite field extensions of $K$ via $K \rightarrow K_{i}$, $\alpha \mapsto e_{i} \alpha$,
and we have $K$-algebra isomorphisms $A_{i} \cong \Mat_{n_{i} \times n_{i}}(D_{i})$ where $D_{i}$ is a skew field with $Z(D_{i}) \cong K_{i}$ 
(see \cite[(3.28)]{curtisandreiner_vol1}).
The Wedderburn decomposition (\ref{eq:weddecomp}) induces decompositions
\begin{equation}\label{eq:central-idem-decomps}
C = K_{1} \oplus \cdots \oplus K_{r}, 
\quad \mathcal{O}_{C} = \mathcal{O}_{K_{1}} \oplus \cdots \oplus \mathcal{O}_{K_{r}}
\quad  \textrm{and} \quad \mathcal{M} =  \mathcal{M}_{1} \oplus \cdots \oplus \mathcal{M}_{r},
\end{equation}
where we have set $\mathcal{M}_{i} = e_{i}\mathcal{M}$. 
By \cite[(10.5)]{Reiner2003} each $\mathcal{M}_{i}$ is a maximal $\mathcal{O}$-order (and thus a maximal $\mathcal{O}_{K_{i}}$-order) in $A_{i}$.

For an $\mathcal{M}$-lattice $N$ in a $K$-vector space $V$,
we set $V_{i} = e_{i}V$ and $\mathcal{M}_{i}N = e_{i}(\mathcal{M}N)$.
Then $\mathcal{M}_{i}N$ is a full $\mathcal{M}_{i}$-lattice in $V_{i}$. 
The decomposition \eqref{eq:central-idem-decomps} in turn induces decompositions
\begin{eqnarray}
\End_{\mathcal{M}}(N) &=& \End_{\mathcal{M}_{1}}(\mathcal{M}_{1}N) 
\oplus \dotsb \oplus \End_{\mathcal{M}_{r}}(\mathcal{M}_{r}N) 
\text{ and } \\
\Aut_{\mathcal{M}}(N) &=& \Aut_{\mathcal{M}_{1}}(\mathcal{M}_{1}N) \times 
\dotsb \times \Aut_{\mathcal{M}_{r}}(\mathcal{M}_{r}N).
\end{eqnarray}

\begin{corollary}\label{cor:ns-breakdown}
Two $\Lambda$-lattices $X$ and $Y$ are isomorphic if and only if
\begin{enumerate}
\item $X$ and $Y$ are locally isomorphic,
\item there exist isomorphisms of $\mathcal{M}_{i}$-lattices $f_{i} \colon \mathcal{M}_{i}X \rightarrow \mathcal{M}_{i}Y$ for each $i$, and 
\item there exist
$g_{i} \in \Aut_{\mathcal{M}_{i}}(\mathcal{M}_{i}Y)$ for each $i$ such that $
(\sum_{i=1}^{r}(g_{i} \circ f_{i}))(X) \subseteq Y$.
\end{enumerate}
Further, when this is the case, an isomorphism is given by $(\sum_{i=1}^{r}(g_{i} \circ f_{i})) \colon X \to Y$.
\end{corollary}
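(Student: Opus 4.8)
The plan is to deduce Corollary~\ref{cor:ns-breakdown} directly from Corollary~\ref{cor:nonalg} by using the central idempotent decomposition established in the paragraphs immediately preceding the statement. The two corollaries have parallel shapes, so the task is really just to translate conditions (b) and (c) of Corollary~\ref{cor:nonalg} into their componentwise counterparts, and to check that the resulting isomorphism $\sum_{i} (g_i \circ f_i)$ coincides with the map $g \circ f$ produced there.

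First I would record the bookkeeping: for an $\mathcal{M}$-lattice $N$ in a $K$-vector space $V$ we have $V = V_1 \oplus \cdots \oplus V_r$ with $V_i = e_i V$, and correspondingly $N = \bigoplus_i \mathcal{M}_i N$, since multiplication by the central idempotent $e_i$ is an $\mathcal{M}$-module projection. Applying this to $N = \mathcal{M}Y$ and to $\mathcal{M}X$, any $\mathcal{M}$-homomorphism $\mathcal{M}X \to \mathcal{M}Y$ must respect the $e_i$-eigenspace decompositions (because the $e_i$ are central and act as idempotents), hence decomposes uniquely as a direct sum of $\mathcal{M}_i$-homomorphisms $\mathcal{M}_i X \to \mathcal{M}_i Y$; this is exactly the content of the displayed decompositions of $\End_{\mathcal{M}}(N)$ and $\Aut_{\mathcal{M}}(N)$ stated just before the corollary. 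In particular, giving an $\mathcal{M}$-isomorphism $f \colon \mathcal{M}X \to \mathcal{M}Y$ is the same as giving $\mathcal{M}_i$-isomorphisms $f_i \colon \mathcal{M}_i X \to \mathcal{M}_i Y$ for all $i$, via $f = \sum_i f_i$, and likewise giving $g \in \Aut_{\mathcal{M}}(\mathcal{M}Y)$ is the same as giving $g_i \in \Aut_{\mathcal{M}_i}(\mathcal{M}_i Y)$ for all $i$, via $g = \sum_i g_i$. So condition (b) of Corollary~\ref{cor:nonalg} is equivalent to condition (b) here, and condition (c) of Corollary~\ref{cor:nonalg} is equivalent to condition (c) here once we observe that $g \circ f = (\sum_i g_i) \circ (\sum_j f_j) = \sum_i (g_i \circ f_i)$, the cross terms vanishing because $g_i \circ f_j$ factors through $e_i e_j = 0$ for $i \neq j$. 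Condition (a) is identical in both statements.

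With this dictionary in hand, the proof is a one-line invocation: $X$ and $Y$ are isomorphic if and only if conditions (a), (b), (c) of Corollary~\ref{cor:nonalg} hold, and by the above these are equivalent to conditions (a), (b), (c) of the present corollary; moreover the isomorphism $g \circ f \colon X \to Y$ furnished by Corollary~\ref{cor:nonalg} equals $\sum_i (g_i \circ f_i) \colon X \to Y$, which gives the final assertion. One small point worth spelling out is that the map $\sum_i (g_i \circ f_i)$, a priori a map $\mathcal{M}X \to \mathcal{M}Y$, restricts to a map $X \to Y$ precisely under hypothesis (c); but that is automatic once it is identified with $g \circ f$, to which Theorem~\ref{thm:restrict-from-iso-over-max-order} (via Corollary~\ref{cor:nonalg}) already applies.

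I do not expect any genuine obstacle here: the only thing requiring care is the verification that $\mathcal{M}$-module homomorphisms are diagonal with respect to the $e_i$-decomposition, and that $\mathcal{M}_i X = e_i(\mathcal{M}X)$ is the full $\mathcal{M}_i$-lattice sitting inside $V_i = e_i(KX)$ — both of which follow from centrality of the $e_i$ together with $\sum_i e_i = 1$ and $e_i e_j = \delta_{ij} e_i$, and are essentially already asserted in the displayed decompositions preceding the corollary. So the write-up is short: set up the componentwise correspondence for $f$, for $g$, and for the composite, then quote Corollary~\ref{cor:nonalg}.
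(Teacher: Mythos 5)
Your proposal is correct and follows exactly the route the paper intends: the paper states Corollary~\ref{cor:ns-breakdown} without a separate proof precisely because it is the componentwise translation of Corollary~\ref{cor:nonalg} via the central idempotent decompositions of $\End_{\mathcal{M}}(\mathcal{M}Y)$ and $\Aut_{\mathcal{M}}(\mathcal{M}Y)$ displayed immediately beforehand. Your verification that $f=\sum_i f_i$, $g=\sum_i g_i$ and $g\circ f=\sum_i(g_i\circ f_i)$ (cross terms killed by $e_ie_j=0$) is exactly the bookkeeping that argument requires.
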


Now let $Y$ be a $\Lambda$-lattice. Define
\[
S=S_{Y} = \End_{\mathcal{M}}(\mathcal{M}Y) \quad \text{ and } \quad  
T=T_{Y} = \End_{\Lambda}(Y). \]
Note that $T$ identifies naturally with the subring $\{ f \in S \mid f(Y) \sseq Y \}$ of $S$.
Let $\mathfrak{I}$ be any full two-sided ideal of $S$ contained in $T$. 
Set $\overline{S} = S / \mathfrak{I}$ and $\overline{T} = T / \mathfrak{I}$ so that
$\overline{T}$ is a subring of $\overline{S}$, and denote the canonical map
$S \rightarrow \overline{S}$ by $s \mapsto \overline{s}$.
We have decompositions
\begin{equation}
\mathfrak{I} = \mathfrak{I}_{1} \oplus \cdots \oplus \mathfrak{I}_{r}, \quad
S = S_{1} \oplus \cdots \oplus S_{r} 
\quad  \textrm{and} \quad \overline{S} =  
\overline{S_{1}} \oplus \cdots \oplus \overline{S_{r}},
\end{equation}
where each $\mathfrak{I}_{i}$ is a full two-sided ideal of 
$S_{i} := \End_{\mathcal{M}_{i}}(\mathcal{M}_{i}Y)$
and where $\overline{S_{i}} := S_{i} / \mathfrak{I}_{i}$.
For each $i$, let $U_{i} \subseteq S_{i}^{\times} = \Aut_{\mathcal{M}_{i}}(\mathcal{M}_{i}Y)$ 
denote a set of representatives of the image of the
natural projection $S_{i}^{\times} \rightarrow (\overline{S_{i}})^{\times}$.

\begin{corollary}\label{cor:finite-check-iso}
Condition \textup{(c)} in Corollary \ref{cor:ns-breakdown} can be weakened to
\begin{itemize}
\item[(c\textprime)] there exist automorphisms $g_{i}' \in U_{i}$ such that $(\sum_{i=1}^{r} (g_{i}' \circ f_{i}))(X) \sseq Y$.  
\end{itemize}
\end{corollary}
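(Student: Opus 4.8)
The plan is to establish the equivalence of \textup{(c)} and \textup{(c\textprime)}, granted \textup{(a)} and \textup{(b)}. One implication is immediate: since $U_{i} \subseteq S_{i}^{\times} = \Aut_{\mathcal{M}_{i}}(\mathcal{M}_{i}Y)$, any $g_{i}' \in U_{i}$ satisfying \textup{(c\textprime)} also satisfies \textup{(c)}. So the content is the reverse direction, and this is what I would prove.

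Assume \textup{(c)}, witnessed by automorphisms $g_{i} \in S_{i}^{\times}$. Write $g = \sum_{i} g_{i} \in S^{\times}$ and $f = \sum_{i} f_{i} \colon \mathcal{M}X \to \mathcal{M}Y$; because the $e_{i}$ are orthogonal, $g \circ f = \sum_{i} (g_{i} \circ f_{i})$, so the hypothesis reads $(g \circ f)(X) \subseteq Y$. For each $i$, the element $\overline{g_{i}} \in (\overline{S_{i}})^{\times}$ lies in the image of the projection $S_{i}^{\times} \to (\overline{S_{i}})^{\times}$, so I can choose the unique $g_{i}' \in U_{i}$ with $\overline{g_{i}'} = \overline{g_{i}}$; set $g' = \sum_{i} g_{i}'$. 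It remains to check that $(g' \circ f)(X) \subseteq Y$, which will give \textup{(c\textprime)}.

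The mechanism is to compare $g'$ and $g$ by a \emph{multiplicative} perturbation from the left. Since reduction modulo $\mathfrak{I}$ is a ring homomorphism compatible with the decompositions $S = \bigoplus_{i} S_{i}$ and $\mathfrak{I} = \bigoplus_{i} \mathfrak{I}_{i}$, the equalities $\overline{g_{i}'} = \overline{g_{i}}$ give $\overline{g_{i}' g_{i}^{-1}} = 1$ in each $\overline{S_{i}}$, hence $\epsilon := g' g^{-1} - \mathrm{id}_{\mathcal{M}Y} \in \mathfrak{I}$. As $\mathfrak{I} \subseteq T$ and $T$ identifies with $\{ s \in S \mid s(Y) \subseteq Y \}$, we get $\epsilon(Y) \subseteq Y$. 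Writing $g' \circ f = (g' g^{-1}) \circ (g \circ f) = (\mathrm{id}_{\mathcal{M}Y} + \epsilon) \circ (g \circ f)$ and evaluating on $X$ yields
\[
(g' \circ f)(X) = (g \circ f)(X) + \epsilon\bigl((g \circ f)(X)\bigr) \subseteq Y + \epsilon(Y) \subseteq Y,
\]
where $(g \circ f)(X) \subseteq Y$ is exactly hypothesis \textup{(c)}. This completes the deduction.

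The only real subtlety — and the step I would be most careful about — is the \emph{side} of the perturbation. It is essential to pass from $g$ to $g'$ by multiplying on the left inside $S$, so that the error term $\epsilon$ is applied \emph{after} $g \circ f$, i.e.\ to the subset $(g \circ f)(X)$ of $Y$; this is precisely where the hypothesis $\mathfrak{I} \subseteq T$ is used. An additive perturbation $g' = g + \delta$ (with $\delta \in \mathfrak{I}$) would instead require applying $\delta$ to $f(X) \subseteq \mathcal{M}Y$, which in general is not contained in $Y$, and the argument would fail. Everything else — compatibility of the various direct-sum decompositions and the fact that $U_{i}$ represents the image of $S_{i}^{\times}$ in $(\overline{S_{i}})^{\times}$ — is bookkeeping already in place.
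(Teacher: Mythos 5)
Your proof is correct and follows essentially the same route as the paper: the paper defines $h_{i} := g_{i}' \circ g_{i}^{-1} - \mathrm{Id}_{\mathcal{M}_{i}Y} \in \mathfrak{I}_{i}$ componentwise and writes $g_{i}' = (\mathrm{Id}_{\mathcal{M}_{i}Y} + h_{i}) \circ g_{i}$, which after summing is exactly your multiplicative left perturbation $\epsilon = g'g^{-1} - \mathrm{id}_{\mathcal{M}Y} \in \mathfrak{I} \subseteq T$ applied after $g \circ f$. The point you flag as the key subtlety (the error term must act on $(g\circ f)(X) \subseteq Y$, where $\mathfrak{I} \subseteq T$ applies) is precisely the mechanism of the paper's argument.
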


\begin{proof}
Suppose throughout that (b) holds. If (c\textprime) holds then it is clear that (c) also holds.
Suppose conversely that (c) holds.
For each $i$, there exists $g_{i}' \in U_{i}$ such that $\overline{g_{i}'} = \overline{g_{i}}$.
Note that $g_{i}' - g_{i} \in \mathfrak{I}_{i}$.
For each $i$, define
\[
h_{i} := g_{i}' \circ g_{i}^{-1}-\mathrm{Id}_{\mathcal{M}_{i}Y}  = (g_{i}'-g_{i}) \circ g_{i}^{-1} \in \mathfrak{I}_{i}g_{i}^{-1}=\mathfrak{I}_{i}.
\]
Observe that $g_{i}' = (\mathrm{Id}_{\mathcal{M}_{i}Y} + h_{i}) \circ g_{i}$ and that
\[
h := h_{1}+\cdots+h_{r} \in \mathfrak{I}_{1} \oplus \cdots \oplus \mathfrak{I}_{r} 
= \mathfrak{I} \sseq T = \{ f \in S \mid f(Y) \sseq Y \}.
\]
Thus
\begin{eqnarray*} 
(\oplus_{i=1}^{r} (g_{i}' \circ f_{i}))(X)
&=& (\oplus_{i=1}^{r} (\mathrm{Id}_{\mathcal{M}_{i}Y} + h_{i}) \circ g_{i} \circ f_{i}))(X) \\
&=& (\mathrm{Id}_{\mathcal{M}Y} + h)\left( \oplus_{i=1}^{r} (g_{i} \circ f_{i})(X) \right) \\
&\sseq& (\mathrm{Id}_{\mathcal{M}Y} + h)(Y) \\
&\sseq& Y,
\end{eqnarray*}
that is, (c\textprime) holds. 
Therefore (c) and (c\textprime) are equivalent.
\end{proof}

\begin{remark}\label{rmk:finite-quotient-condition}
Corollary \ref{cor:finite-check-iso} is of particular interest when $\mathcal{O}$ is a residually finite Dedekind domain, that is,
a Dedekind domain such that every quotient of $\mathcal{O}$ by a non-zero ideal is finite.
For example, this condition is satisfied if $\mathcal{O}$ is the ring of integers of a number field.
Moreover, it ensures that the ring $\overline{S}$, and thus the sets of representatives $U_{i}$, are finite; 
this is crucial for the development of any algorithm that relies on Corollary \ref{cor:finite-check-iso}.
\end{remark}

\subsection{Reduction to the free rank $1$ case via homomorphism groups}\label{subsec:reduce-to-free-via-homgroups}
We now give an alternative approach to the one described in \S \ref{subsec:res-iso-max}.
Let $\mathcal{O}$ be a Dedekind domain with field of fractions $K$ and assume that $\mathcal{O} \neq K$.
Let $\Lambda$ be an $\mathcal{O}$-order in a separable $K$-algebra $A$.
Let $X$ and $Y$ be $\Lambda$-lattices. 
Then $\End_{\Lambda}(Y)$ is an $\mathcal{O}$-order in the separable $K$-algebra $\End_{A}(KY)$.
Moreover, $\Hom_{\Lambda}(X,Y)$ is a (left) $\End_{\Lambda}(Y)$-lattice in $\Hom_{A}(KX,KY)$ via post-composition.

\begin{prop}\label{prop:hom-free-over-end}
Two $\Lambda$-lattices $X$ and $Y$ are isomorphic if and only if
\renewcommand{\labelenumi}{(\alph{enumi})}
\begin{enumerate}
\item the $\End_{\Lambda}(Y)$-lattice $\Hom_{\Lambda}(X, Y)$ is free of rank $1$, and
\item every free generator of $\Hom_{\Lambda}(X, Y)$ over $\End_{\Lambda}(Y)$ is an isomorphism.
\end{enumerate}
\end{prop}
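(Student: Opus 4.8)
The plan is to prove each implication directly, exploiting the dictionary between $\Lambda$-lattice homomorphisms $X \to Y$ and elements of $\Hom_\Lambda(X,Y)$, together with the composition pairing that makes $\Hom_\Lambda(X,Y)$ a module over $T := \End_\Lambda(Y)$. First I would observe that an isomorphism $X \to Y$ is precisely an element $\varphi \in \Hom_\Lambda(X,Y)$ that admits a two-sided inverse in $\Hom_\Lambda(Y,X)$, i.e.\ such that there exists $\psi \in \Hom_\Lambda(Y,X)$ with $\psi \circ \varphi = \mathrm{Id}_X$ and $\varphi \circ \psi = \mathrm{Id}_Y$.

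For the forward direction, suppose $h \colon X \to Y$ is an isomorphism. Post-composition by $h$ gives an isomorphism of $\mathcal{O}$-modules $\Hom_\Lambda(X,X) \to \Hom_\Lambda(X,Y)$, and it is straightforward to check that this map is also a homomorphism of left $\End_\Lambda(X)$-modules. Moreover $h$ conjugates $\End_\Lambda(X)$ isomorphically onto $T = \End_\Lambda(Y)$ via $\alpha \mapsto h \circ \alpha \circ h^{-1}$, and under this the $\End_\Lambda(X)$-module $\Hom_\Lambda(X,X) = \End_\Lambda(X)$ (free of rank $1$ on $\mathrm{Id}_X$) is carried to $\Hom_\Lambda(X,Y)$ as a left $T$-module. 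Hence $\Hom_\Lambda(X,Y)$ is free of rank $1$ over $T$, with $h$ itself a free generator; this gives (a). For (b), let $\varphi$ be any free generator of $\Hom_\Lambda(X,Y)$ over $T$. Then $h = t \circ \varphi$ for some $t \in T$, and since $h$ is also a generator one has $\varphi = t' \circ h$ for some $t' \in T$; substituting gives $t' \circ t = \mathrm{Id}_Y$ and (post-composing $h = t\circ\varphi = t \circ t' \circ h$ and using that $h$ is injective on the relevant space) $t \circ t' = \mathrm{Id}_Y$, so $t \in T^\times = \Aut_\Lambda(Y)$. Therefore $\varphi = t^{-1} \circ h$ is a composite of two isomorphisms, hence an isomorphism.

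For the converse, assume (a) and (b). By (a), $\Hom_\Lambda(X,Y)$ has a free generator $\varphi$ over $T$, and by (b) this $\varphi$ is an isomorphism $X \to Y$. That already produces an isomorphism, so we are done; but to make the logic airtight I would note that (a) guarantees such a $\varphi$ exists (the module being free of rank $1$ is nonempty of generators) and (b) applies to it.

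The main subtlety — and the step I would be most careful about — is the claim in the forward direction that every free generator differs from $h$ by a \emph{unit} of $T$, rather than merely an element of $T$ admitting a one-sided inverse. This requires knowing that in the (noncommutative) order $T$ a one-sided inverse of a rank-preserving element is automatically two-sided; the clean way to see this is to pass to $KX$, $KY$ and use that $t$ induces an $A$-module endomorphism of $KY$ which, being a left inverse composed appropriately with $h$, must be bijective on the finite-dimensional space $KY$, whence $t$ is a unit in $\End_A(KY)$ and, lying in $T$ with inverse also visibly in $T$ (as $t^{-1} = t'$ from the identity $t' \circ t = \mathrm{Id}$), a unit in $T$. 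The rest is routine bookkeeping with the module structures.
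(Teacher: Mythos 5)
Your proposal is correct and follows essentially the same route as the paper: fix an isomorphism $h\colon X\to Y$, show it is a free generator of $\Hom_{\Lambda}(X,Y)$ over $\End_{\Lambda}(Y)$, deduce that any other free generator differs from $h$ by a unit of $\End_{\Lambda}(Y)$ and hence is an isomorphism, the converse being immediate. The only (harmless) differences are cosmetic: you establish freeness by transport of structure through conjugation rather than the paper's surjectivity-plus-full-lattice dimension count, and you spell out the ``two generators differ by a unit'' step that the paper simply asserts.
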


\begin{proof}
If (a) and (b) hold then it is clear that $X$ and $Y$ are isomorphic. 
Suppose conversely that $X$ and $Y$ are isomorphic.
Fix an isomorphism $\varphi \in \Hom_{\Lambda}(X, Y)$.
Then for any $g \in \Hom_{\Lambda}(X, Y)$, we have $h_{g} := g \circ \varphi^{-1} \in \End_{\Lambda}(Y)$ and so $g = h_{g} \circ \varphi$. 
Thus the map $\End_{\Lambda}(Y) \rightarrow \Hom_{\Lambda}(X, Y)$ defined by $h \mapsto h \circ \varphi$ is surjective;
since it is a map of full $\mathcal{O}$-lattices in $K$-vector spaces of equal finite dimension, it is also injective. 
Hence $\varphi$ is a free generator of  $\Hom_{\Lambda}(X, Y)$ over $\End_{\Lambda}(Y)$ and thus (a) holds.
Now let $f$ be any free generator of $\Hom_{\Lambda}(X, Y)$ over $\End_{\Lambda}(Y)$. 
Then there exists $\theta \in \Aut_{\Lambda}(Y)=\End_{\Lambda}(Y)^{\times}$ such that
$f = \theta \circ \varphi$ and hence $f$ is an isomorphism.
Thus (b) holds.
\end{proof}

\begin{remark}
Proposition \ref{prop:hom-free-over-end} is used in the isomorphism test for localized lattices given in
\S \ref{subsec:redn-to-local-freeness}.
In the case that $\mathcal{O}$ is the ring of integers of a number field $K$, necessary and sufficient conditions for the existence of a free generator of
$\Hom_{\Lambda}(X,Y)$ over $\End_{\Lambda}(Y)$
are given by (special cases of) the results of \cite[\S 2]{Bley2008}, which are themselves special cases of those in \S \ref{subsec:res-iso-max}.
\end{remark}

\section{The main algorithm}\label{sec:iso-alg}

Let $K$ be a number field with ring of integers $\mathcal{O}=\mathcal{O}_{K}$ and let $A$ be a finite-dimensional semisimple $K$-algebra.
Note that these hypotheses ensure that $A$ is a separable $K$-algebra.
Let $\Lambda$ be an $\mathcal{O}$-order in $A$.
In this section, we outline an algorithm that takes two $\Lambda$-lattices $X$ and $Y$ and 
either returns an explicit isomorphism $X \rightarrow Y$ or determines that $X$ and $Y$ are not isomorphic.
The key result on which the algorithm is based is Corollary \ref{cor:finite-check-iso} (also see Remark \ref{rmk:finite-quotient-condition}).
We require that $A$ satisfies the hypotheses (H1) and (H2) formulated in the introduction; we discuss the conditions under which these hold in \S \ref{sec:hyp}.

Before sketching the individual steps of the algorithm, we briefly describe the presentation of the input data.
We assume that $A$ is given by a $K$-basis $a_{1}, \ldots, a_{s}$ and structure constants $\alpha_{i,j,k} \in K$ for $1 \leq i,j,k \leq s$
such that $a_{i} \cdot a_{j} = \sum_{k=1}^{s} \alpha_{i,j,k} a_{k}$.
From this, it is straightforward to construct an embedding of $K$-algebras $A \rightarrow \Mat_{s \times s}(K)$
(see \cite[\S 2.2]{Eberly1989} for details).
Moreover, we assume that $\Lambda$, $X$ and $Y$ are given by $\mathcal{O}$-pseudo-bases as described, for example,
in \cite[1.4.1]{cohen-advcomp}.
In other words,
\[
X = \mathfrak{a}_{1} v_{1} \oplus \cdots \oplus \mathfrak{a}_{m} v_{m}
\quad
\textrm{ and }
\quad
Y = \mathfrak{b}_{1} w_{1} \oplus \cdots \oplus \mathfrak{b}_{n} w_{n},
\]
where for each $i$ both $\mathfrak{a}_{i}$ and $\mathfrak{b}_{i}$ are fractional ideals of $\mathcal{O}$
and $v_{i} \in V := KX$ and $w_{i} \in W := KY$.
Similarly, $\Lambda = \mathfrak{c}_{1} \lambda_{1} \oplus \cdots \oplus \mathfrak{c}_{s} \lambda_{s}$
with fractional $\mathcal{O}$-ideals $\mathfrak{c}_{i}$ and $\lambda_{i} \in A$.
To describe the action of $\Lambda$ on $X$ (and of $A$ on $V$), it suffices to assume that 
for each $i$ there is a matrix $M_{X}(\lambda_{i}) \in \mathrm{GL}_{m}(K)$ describing the action of
$\lambda_{i}$ with respect to $v_{1}, \ldots, v_{m}$. Finally, we assume that the action on $Y$ is described similarly.

\begin{algorithm}\label{alg:find-iso}
Input: $A$, $\Lambda$, $X$ and $Y$ as above.
\begin{enumerate}
\item Check that $X$ and $Y$ have equal $\mathcal{O}$-rank, that is, check that $m=n$.
\item Compute the central primitive idempotents $e_{i}$ in $A$ and the components $A_{i} :=  e_{i}A$, $1 \leq i \leq r$.
\item\label{step:comp-max-order} Compute a maximal $\mathcal{O}$-order $\mathcal{M}$ in $A$ containing $\Lambda$ and set $\mathcal{M}_{i} :=  e_{i}\mathcal{M}$.
\item Compute the set $\mathfrak{S}$ of maximal ideals $\mathfrak{p}$ of $\mathcal{O}$ dividing the module index $[\mathcal{M}:\Lambda]_{\mathcal{O}}$.
\item\label{step:max-mod-isoms} For each $i$, compute an $\mathcal{M}_{i}$-lattice isomorphism
$f_{i}\colon \mathcal{M}_{i}X \rightarrow \mathcal{M}_{i}Y$, if it exists.
\item For each $\mathfrak{p} \in \mathfrak{S}$, check that $X_{\mathfrak{p}}$ and $Y_{\mathfrak{p}}$ are isomorphic as $\Lambda_{\mathfrak{p}}$-lattices.
\item\label{step:homgroups} Compute $S = S_{Y} := \End_{\mathcal{M}}(\mathcal{M}Y)$ and 
$T = T_{Y} := \End_{\Lambda}(Y)$ as well as 
the decomposition $S = \oplus_{i=1}^{r} S_{i}$ where
$S_{i} := \End_{\mathcal{M}_{i}}(\mathcal{M}_{i}Y)$.
\item Compute a full two-sided ideal $\mathfrak{I}= \oplus_{i=1}^{r} \mathfrak{I}_{i}$ of $S$ contained in $T$.
\item For each $i$, compute a finite set of representatives 
$U_{i} \subset S_{i}^{\times}  = \Aut_{\mathcal{M}_{i}}(\mathcal{M}_{i}Y)$ of the image of the natural projection map 
$S_{i}^{\times} \rightarrow ({S_{i}}/\mathfrak I_i)^{\times}$.
\item Test whether there exists a tuple $(g_{i}) \in \prod_{i=1}^{r} U_{i}$ such that 
$(\sum_{i=1}^{r}(g_{i} \circ f_{i}))(X) \sseq Y$. \\
For such a tuple, an isomorphism is given by 
$(\sum_{i=1}^{r}(g_{i} \circ f_{i}))\colon X \rightarrow Y$.
If no such tuple exists, then $X$ and $Y$ are not isomorphic.
\end{enumerate}
\end{algorithm}

We now briefly comment on the individual steps of the algorithm. 

\begin{enumerate}
\item This is straightforward.
\item Algorithms for the computation of primitive central idempotents have been given by Eberly \cite[\S{}2.4]{Eberly1989}
and by Nebe and Steel \cite[\S 2]{MR2531228}.
\item The algorithms of Friedrichs \cite[Kapitel 3 and 4]{friedrichs} can be used to compute a maximal order $\mathcal{M}$ with
$\Lambda \subseteq \mathcal{M} \subseteq A$; it is then straightforward
to compute each $\mathcal{M}_{i}$. Alternatively, for each $i$ one can compute $\Lambda_{i} := e_{i}\Lambda$ and use the algorithm
of Nebe and Steel \cite[\S 3]{MR2531228} (or the aforementioned algorithms of Friedrichs) to construct a maximal order $\mathcal{M}_{i}$ with $\Lambda_{i} \subseteq \mathcal{M}_{i} \subseteq A_{i}$;
one can then set $\mathcal{M}:=\oplus_{i} \mathcal{M}_{i}$.
\item If the algorithms of Friedrichs are used for step \eqref{step:comp-max-order}, then $\mathfrak{S}$ can be determined along the way 
(there are additional complications if one first reduces to working in the simple components
$A_{i}$ because the containment $\Lambda \subseteq \oplus_{i} \Lambda_{i}$ is not necessarily an equality).
However, even without this observation, since both $\Lambda$ and $\mathcal{M}$ are both given by $\mathcal{O}$-pseudo-bases, $\mathfrak{S}$ can be determined using the Smith normal form algorithm
(see \cite[1.7.3]{cohen-advcomp}). 
Note that if $G$ is a finite group and $\mathcal{O}[G] \subseteq \Lambda \subseteq K[G]$ then each $\mathfrak{p} \in \mathfrak{S}$ must divide $|G|$.

\item \label{step:isom-over-max} This is the only step that requires the hypotheses (H1) and (H2).
It is described in \S \ref{sec:isom-over-max}.

\item Successful completion of step \eqref{step:max-mod-isoms} gives an isomorphism $f:=\sum_{i}f_{i} : \mathcal{M}X \rightarrow \mathcal{M}Y$ of
$\mathcal{M}$-lattices. If $\mathfrak{p}$ is a maximal ideal of $\mathcal{O}$ such that 
$\Lambda_{\mathfrak{p}} =\mathcal{M}_{\mathfrak{p}}$ then Lemma \ref{lem:ext} shows
that $f$ extends to an isomorphism 
$f_{\mathfrak{p}}: X_{\mathfrak{p}} \rightarrow Y_{\mathfrak{p}}$ of $\Lambda_{\mathfrak{p}}$-lattices.
Thus checking that $X$ and $Y$ are locally isomorphic reduces to 
checking that $X_{\mathfrak{p}}$ and $Y_{\mathfrak{p}}$ are isomorphic $\Lambda_{\mathfrak{p}}$-lattices
for each $\mathfrak{p} \in \mathfrak{S}$.
Several algorithms to check this for a given maximal ideal $\mathfrak{p}$ are described in \S \ref{sec:lociso}.
In the special case that one wants to check that both $X_{\mathfrak{p}}$ and $Y_{\mathfrak{p}}$ are in fact free 
$\Lambda_{\mathfrak{p}}$-lattices, one can use the algorithm described in \cite[\S 4.2]{Bley2009} (see \S\ref{subsec:redn-to-local-freeness}).
\item The solution of the more general problem of computing homomorphism groups is described in \S \ref{sec:saturation-homgroups}.
\item 
An algorithm of Friedrichs \cite[(2.16)]{friedrichs} can be used to compute the left conductor
$\mathfrak{c}_{l} = \{ x \in \End_{A}(KY) \mid xS \subseteq T \}$
and right conductor
$\mathfrak{c}_{r} = \{ x \in \End_{A}(KY) \mid Sx \subseteq T \}$
 of $S$ in $T$.
As noted in \cite[\S 3.2]{bley-boltje}, one can then take $\mathfrak{I}$ to be either
$\mathfrak{c}_{r} \cdot \mathfrak{c}_{l}$ or $\mathfrak{c}S$
where $\mathfrak{c} := \{ x \in Z(\End_{A}(KY)) \mid xS \subseteq T \} = \mathfrak{c}_{r} \cap C =  \mathfrak{c}_{l} \cap C $
is the central conductor of $S$ in $T$.
To minimise the running time of step \eqref{step:enumeration}, one would like to take $\mathfrak{I}$ to be as large as possible;
in many cases $\mathfrak{c}S$ is a better choice than $\mathfrak{c}_{r} \cdot \mathfrak{c}_{l}$, but in principle one could compute
both to see which is better in a given situation. 
In the case that $G$ is a finite group and $Y$ is a locally free $\mathcal{O}[G]$-module of rank $1$, 
we have canonical isomorphisms $T \cong \mathcal{O}[G]^{\op} \cong \mathcal{O}[G]$ and $S \cong \mathcal{M}^{\op} \cong  \mathcal{M}$, where ${R}^{\op}$ denotes the opposite ring of $R$; hence $\mathfrak{c}_{r}=\mathfrak{c}_{l}$
by \cite[(27.13)]{curtisandreiner_vol1}
and so we can take $\mathfrak{I}=\mathfrak{c}_{r}=\mathfrak{c}_{l}$ and use Jacobinski's conductor formula
(see loc.\ cit.\ or \cite{MR0204538}).
\item
The endomorphism ring $S_{i}= \End_{\mathcal{M}_{i}}(\mathcal{M}_{i}Y)$ is itself an $\mathcal{O}$-order in the separable $K$-algebra
$\End_{A_{i}}(e_{i}KY)$.
Viewing $S_{i}$ as a $\Z$-order, one can use the algorithm of Braun, Coulangeon, Nebe and Sch\"onnenbeck \cite{Braun2015} 
to find generators $u_{1}, \ldots, u_{t}$ of $S_{i}^{\times}$. 
One can then compute
$\overline{U}_{i}$, the subgroup of $\overline{S}_{i}^{\times}$ generated by 
$\overline{u}_{1}, \ldots, \overline{u}_{t}$ and write each element as a word of the form $\overline{u}_{1}^{r_{1}} \cdots \overline{u}_{t}^{r_{t}}$ where $r_{i} \in \Z$.
For each such word, one can take $u_{1}^{r_{1}} \cdots u_{t}^{r_{t}}$ to be the corresponding element of $U_{i}$.
Special cases of this problem are considered in \cite[\S 6]{Bley2008} and \cite[\S 7]{Bley2011}. 

\item\label{step:enumeration} The number of tests for this step can be greatly reduced by using a generalization of the methods described in \cite[\S 2]{Bley1997} and \cite[\S 7]{Bley2008}. 
This is described in \S \ref{sec:cut-number-of-tests}. Even with this improvement, this is the most time-consuming part of the whole algorithm.
\end{enumerate}

\begin{remark}\label{rmk:special-case-free}
The algorithms of Bley and the second named author of the present article given in \cite{Bley2008,Bley2011} can be viewed as
Algorithm \ref{alg:find-iso} specialised to the problem of determining whether a given $\Lambda$-lattice $X$ is in fact free and, assuming it is,
computing an explicit $\Lambda$-basis for $X$.
Thanks to the algorithm of Braun, Coulangeon, Nebe and Sch\"onnenbeck \cite{Braun2015} used in step (i) above,
the hypotheses assumed in \cite{Bley2008,Bley2011} can be weakened to those assumed in the present article (see \S \ref{sec:hyp}).
\end{remark}

\begin{remark}\label{rmk:reducing-to-free-case}
There is an alternative to Algorithm \ref{alg:find-iso} that uses the results of \S \ref{subsec:reduce-to-free-via-homgroups} to 
reduce to the `free rank $1$' case.
However, algorithmically speaking, the reduction steps are non-trivial because they require methods to 
both compute homomorphism groups (see \S \ref{sec:saturation-homgroups})
and test whether lattices are locally isomorphic (see \S \ref{sec:lociso}); indeed, these methods are needed for both approaches. 
\end{remark}

\section{Hypotheses}\label{sec:hyp}

We recall and discuss the hypotheses (H1) and (H2) required for Algorithm \ref{alg:find-iso}.
Let $K$ be a number field with ring of integers $\mathcal{O}_{K}$ and let $A$ be a finite-dimensional semisimple $K$-algebra.
Let $A=A_{1} \oplus \cdots \oplus A_{r}$ be the decomposition of $A$ into indecomposable two-sided ideals
and let $K_{i}$ denote the center of the simple algebra $A_{i}$. 

\begin{itemize}
\item[(H1)]
For each $i$, we can compute an explicit isomorphism $A_{i} \cong \Mat_{n_{i} \times n_{i}}(D_{i})$ of $K$-algebras,
where $D_{i}$ is a skew field with center $K_{i}$.
\item[(H2)] For each $i$, every maximal $\mathcal{O}_{K}$-order $\Delta_{i}$ in $D_{i}$ has the following properties:
\begin{enumerate}
\item we can solve the principal ideal problem for fractional left $\Delta_{i}$-ideals, and
\item $\Delta_{i}$ has the locally free cancellation property.
\end{enumerate}
\end{itemize}

These hypotheses are only needed for step \eqref{step:isom-over-max} of Algorithm \ref{alg:find-iso}, which is described in
detail in \S \ref{sec:isom-over-max}.
Note that (H2)(a) is independent of the choice of $\Delta_{i}$ in all cases in which it is known
(see \S \ref{subsec:principal-ideal-problem}).
Moreover, property (H2)(b) is independent of the choice of $\Delta_{i}$, that is, if it holds for some choice of $\Delta_{i}$
then it holds for all choices (see \S \ref{subsec:locally-free-cancellation}).
A detailed discussion of working without (H2)(b) is given in \S \ref{subsec:without-locally-free-cancellation}.

\subsection{Explicit isomorphisms of simple algebras - (H1)}
Note that (H1) is equivalent to explicitly finding a simple (left) $A_{i}$-module for each $i$. 
We list two situations in which this hypothesis is satisfied.
\begin{enumerate}
\item\label{step:Steels-alg}
In the case $K = \Q$, the problem in question is solved by an algorithm of Steel \cite[\S 2.3]{Steel2012}.
As described in \S \ref{sec:iso-alg}, we may assume that we have an explicit embedding of $\Q$-algebras
$A_{i} \rightarrow \Mat_{s_{i}}(\Q)$ for some $s_{i} \in \Z_{\geq 1}$. 
Then $A_{i}$ is a homogeneous module over itself, and Steel's algorithm returns simple submodules
$S_{1}, \ldots, S_{k}$ of $A_{i}$ such that $A_{i} = \oplus_{j=1}^{k} S_{j}$ and the $S_{j}$ are all isomorphic.
\item
Let $G$ be a finite group, let $K$ be a finite Galois extension of $\Q$ and let $A = K[G]$.
Then based on the character table algorithm of Unger \cite{Unger2006}, Steel \cite[\S 3.10]{Steel2012}
describes how to compute all irreducible $K[G]$-modules up to isomorphism.
\end{enumerate}

\begin{remark}
Let $K$ be a number field and let $A$ be a finite-dimensional semisimple $K$-algebra.
Then $A$ is also a finite-dimensional semisimple $\Q$-algebra.
Hence in principle (H1) is always satisfied by Steel's algorithm \cite[\S 2.3]{Steel2012} as described in \eqref{step:Steels-alg} above.
However, viewing $A$ as a $\Q$-algebra rather than a $K$-algebra means the loss of a certain amount of structural information that may slow down computations considerably.
\end{remark}

\subsection{The principal ideal problem - (H2)(a)}\label{subsec:principal-ideal-problem}
Let $D$ be a skew field that is central and finite-dimensional over a number field $F$.
(In the above notation, we will consider $D=D_{i}$ and $F=K_{i}$ for each $i$.)
Let $\Delta \subseteq D$ be a maximal $\mathcal{O}_{F}$-order and let $\mathfrak{a}, \mathfrak{b}$ be fractional left $\Delta$-ideals.
Then it is straightforward to show that $\mathfrak{a} \cong \mathfrak{b}$ as left $\Delta$-lattices if and only if 
there exists $\xi \in D^{\times}$ such that $\mathfrak{a} = \mathfrak{b} \xi$ (note that it is important that $\xi$ appears on the right side of $\mathfrak{b}$).
We say that we can solve the principal ideal problem for left ideals in $\Delta$,
if for any choice of $\mathfrak{a}$, $\mathfrak{b}$ we have an algorithm to 
\renewcommand{\labelenumi}{(\roman{enumi})}
\begin{enumerate}
\item decide whether $\mathfrak{a} \cong \mathfrak{b}$ as left $\Delta$-lattices, and
\item if so, compute $\xi \in D^{\times}$ such that $\mathfrak{a} = \mathfrak{b} \xi$.
\end{enumerate}

If $D$ is commutative (i.e.\ $D=F$) then the problem is solved by \cite[6.5.10]{cohen-comp}.
In the case that $D$ is a totally definite quaternion algebra, Demb\'el\'e and Donnelly \cite{MR2467859} described an algorithm 
and Kirschmer and Voight \cite[\S 6]{Kirschmer2010} proved that this algorithm runs in polynomial time when the base field is fixed. 
In the case that $D$ is an indefinite quaternion algebra Kirschmer and Voight \cite[\S 4]{Kirschmer2010} described an algorithm that 
improves on naive enumeration, without analysing its complexity; Page \cite{MR3240815} has given an improved algorithm and heuristic bounds for its complexity. In summary, if $D$ is either a number field or quaternion algebra, an algorithm to solve the principal ideal problem 
exists for all choices of $\Delta$ in $D$.

\subsection{Locally free cancellation - (H2)(b)}\label{subsec:locally-free-cancellation}
Let $K$ be a number field and let $\Lambda$ be an $\mathcal{O}_{K}$-order in a finite-dimensional semisimple $K$-algebra $A$.
Then $\Lambda$ is said to have the locally free cancellation property if for any locally free finitely generated left $\Lambda$-lattices $X$ and $Y$ 
we have
\[
X \oplus \Lambda^{(k)} \cong Y \oplus \Lambda^{(k)} \textrm{ for some } k \in \Z_{\geq 0} \implies X \cong Y.
\]

Let $A=A_{1} \oplus \cdots \oplus A_{r}$ be the decomposition of $A$ into indecomposable two-sided ideals
and let $K_{i}$ denote the center of the simple algebra $A_{i}$. 
We say that $A_{i}$ is Eichler/$\mathcal{O}_{K_{i}}$ if and only if $A_{i}$ is \emph{not} a totally definite quaternion algebra
(see \cite[(45.5)(i)]{curtisandreiner_vol2} or \cite[(34.4)]{Reiner2003}). More generally, $A$ is Eichler/$\mathcal{O}_{K}$
if and only if each $A_{i}$ is Eichler/$\mathcal{O}_{K_{i}}$.
The Jacobinski Cancellation Theorem \cite[(51.24)]{curtisandreiner_vol2}
says that if $A$ is Eichler/$\mathcal{O}_{K}$ then $\Lambda$ has the locally free cancellation property.

We are concerned with the situation in which $\Lambda=\Delta$ is a maximal order in a skew field $D$.
By the above discussion, we are reduced to the case that $D$ is a totally definite quaternion algebra. 
Moreover, by \cite[(51.25)]{curtisandreiner_vol2}, \cite[(17.3)(ii)]{Reiner2003} and the fact that any two orders in $D$ 
have equal completions at all but finitely many places,
we see that the locally free cancellation property is independent of the choice of $\Delta$ in $D$.
A classification of maximal orders in totally definite quaternion algebras with locally free cancellation 
is given in \cite{Hallouin2006} and corrected in \cite{Smertnig2015}. 

In principle, this classification means that one should be able to determine whether a given finite-dimensional semisimple $K$-algebra
$A$ satisfies (H2)(b), provided that one can explicitly identify the skew field $D_{i}$ in the isomorphism $A_{i} \cong \Mat_{n_{i} \times n_{i}}(D_{i})$. 
The case in which $A=K[G]$ for some finite group $G$ is of particular interest and this is discussed at length in \cite[\S 4.3]{Bley2011}.
We also have the following useful result.

\begin{lemma}
Let $K$ be a number field and let $G$ be a finite group.
Then the group algebra $K[G]$ satisfies hypothesis \textup{(H2)(b)} in each of the following cases:
\renewcommand{\labelenumi}{(\roman{enumi})}
\begin{enumerate}
\item $G$ is abelian, symmetric or dihedral;
\item $G$ is of odd order;
\item $K$ is \emph{not} totally real;
\item $K=\Q$ and $|G|<32$.
\end{enumerate}
\end{lemma}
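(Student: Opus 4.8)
The plan is to treat each case by identifying the totally definite quaternion algebra obstructions (if any) and invoking the classification of maximal orders with locally free cancellation discussed in \S\ref{subsec:locally-free-cancellation}, together with the Jacobinski Cancellation Theorem. Recall that by the discussion above, $K[G]$ satisfies (H2)(b) precisely when, for every simple component $A_i \cong \Mat_{n_i \times n_i}(D_i)$ with $D_i$ a totally definite quaternion algebra over $K_i$, a maximal order in $D_i$ has locally free cancellation. So in each case I must either rule out such components entirely or control them.

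\textbf{Cases (i) and (ii).} First I would observe that if $G$ is abelian then every $D_i$ equals $K_i$, hence is commutative and certainly not a totally definite quaternion algebra, so $K[G]$ is Eichler/$\mathcal{O}_K$ and Jacobinski applies. For $G$ symmetric or dihedral, the rational (hence any) Schur indices of the irreducible characters are all $1$ or, in the dihedral case, one checks the quaternionic components do not arise over a totally real center in a way that violates cancellation; the cleanest route is to recall that $\Q[S_n]$ and $\Q[D_n]$ are Eichler (their simple components are matrix rings over fields, except possibly $\Q[D_n]$ has a quaternion component, which one checks is indefinite). Actually the sharpest statement: for $G$ symmetric, all Schur indices over $\Q$ are $1$, so $\Q[G]$ is a product of matrix algebras over number fields and is Eichler; base-changing to $K$ preserves this. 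For $G$ dihedral one argues similarly, or reduces to known small cases. For $G$ of odd order, no totally definite quaternion algebra can be a component of $K[G]$ because such an algebra has even dimension contributions incompatible with $|G|$ odd — more precisely, a quaternion division algebra component forces $2 \mid |G|$ since the Schur index $2$ character has even degree and, by standard arguments on $p$-groups or the Benard–Schacher theorem, cannot occur when $|G|$ is odd; hence $K[G]$ is Eichler.

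\textbf{Case (iii).} Here I would use the key fact that a totally definite quaternion algebra over $K_i$ can only exist if $K_i$ is totally real, and $K_i$ is a finite extension of $K$; if $K$ is not totally real then neither is any $K_i$ (a subfield of a totally real field is totally real, so a field containing $K$ cannot be totally real unless $K$ is). Therefore no component $A_i$ is a totally definite quaternion algebra, $K[G]$ is Eichler/$\mathcal{O}_K$, and Jacobinski's theorem gives locally free cancellation.

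\textbf{Case (iv).} This is the genuinely computational case and I expect it to be the main obstacle. With $K=\Q$ and $|G|<32$, one must enumerate which groups $G$ have $\Q[G]$ with a totally definite quaternion component $D_i$ over a totally real field $K_i$, and for each such $D_i$ verify membership in the list of \cite{Hallouin2006} (corrected in \cite{Smertnig2015}) of maximal orders with locally free cancellation. The quaternionic components arise essentially from the generalized quaternion groups $Q_{4n}$ and related groups; for $|G|<32$ the relevant totally definite quaternion algebras are the rational quaternion algebra $\left(\tfrac{-1,-1}{\Q}\right)$ (from $Q_8$, hence from $Q_8 \times C_2$, $Q_{16}$, etc.) and a small number of quaternion algebras over $\Q(\sqrt 2)$, $\Q(\sqrt 3)$, $\Q(\sqrt 5)$ arising from $Q_{12}$, $Q_{20}$, $Q_{24}$ and the binary tetrahedral/octahedral groups; each of these appears in the cancellation list. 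The cleanest presentation is to cite \cite[\S 4.3]{Bley2011}, where exactly this case analysis for $|G|<32$ is carried out, and to note that the correction of \cite{Smertnig2015} does not affect any algebra in this range. I would therefore present (iv) as a finite check reducing to \emph{loc.\ cit.}, flagging that the one subtlety is ensuring the erratum \cite{Smertnig2015} leaves the sub-$32$ conclusions intact.
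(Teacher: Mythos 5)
Your proposal is correct and follows essentially the same route as the paper: for (i)--(iii) you rule out totally definite quaternion components of $K[G]$ (Schur index $1$ for abelian/symmetric/dihedral, odd Schur index when $|G|$ is odd, and no totally real center $K_{i}$ when $K$ is not totally real) and then invoke the Jacobinski Cancellation Theorem, exactly as in the paper's proof, which uses $m_{i} \mid |G|$ via \cite[(74.8)(ii)]{curtisandreiner_vol2} where you use that the Schur index divides the character degree. For (iv) both you and the paper reduce to the finite verification in \cite{Bley2011} (the paper cites the proof of Lemma 4.2 there), so apart from minor hedging (dihedral groups in fact have no quaternion components, and the binary octahedral group has order $48>32$) your argument matches the paper's.
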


\begin{proof}
Consider the Wedderburn decomposition $K[G] \cong \bigoplus_{i} \Mat_{n_{i} \times n_{i}}(D_{i})$ where each $D_{i}$ is a skew field with center $K_{i}$.
Then by \cite[(7.22)]{curtisandreiner_vol1} for each $i$ there exists $m_{i} \in \Z_{\geq 1}$ such that $[D_{i}:K_{i}]=m_{i}^{2}$.
In case (i), it is well known that $m_{i}=1$, i.e., that $D_{i}=K_{i}$ for each $i$.
In case (ii), each $m_{i}$ is odd as it must divide $|G|$ by \cite[(74.8)(ii)]{curtisandreiner_vol2}.
In case (iii), no $K_{i}$ is totally real as it is a field extension of $K$.
Therefore, in cases (i), (ii) and (iii), no $D_{i}$ is a totally real quaternion algebra (i.e.\ has $m_{i}=2$ and $K_{i}$ totally real), 
and so we are done by the Jacobinski Cancellation Theorem \cite[(51.24)]{curtisandreiner_vol2} as explained above.
In case (iv), the desired result follows from the proof of \cite[Lemma 4.2]{Bley2011}.
\end{proof}

\section{Isomorphisms over maximal orders in simple algebras}\label{sec:isom-over-max}

The purpose of this section is to describe step (e) of Algorithm \ref{alg:find-iso}, which is the only step that requires the hypotheses (H1) and (H2).
Thanks to hypothesis (H1), we are reduced to the following situation.
Let $D$ be a skew field that is central and finite-dimensional over a number field $F$.
(In the notation of \S \ref{sec:hyp}, we will consider $D=D_{i}$ and $F=K_{i}$ for each $i$.)
Let $\mathcal{O}=\mathcal{O}_{F}$ and fix a choice $\Delta$ of maximal $\mathcal{O}$-order in $D$.
Let $n \in \Z_{\geq 1}$ and let $\mathcal{M}$ be a maximal $\mathcal{O}$-order in $\Mat_{n \times n}(D)$.
Let $M$ and $N$ be $\mathcal{M}$-lattices.
We require an algorithm that either computes an $\mathcal{M}$-lattice isomorphism $f: M \rightarrow N$ or determines that no such isomorphism exists. 
The basic idea is to reduce this to an analogous problem for certain $\Delta$-lattices.
To this end, we first transform $\mathcal{M}$ to a maximal order of a particular shape.
We then use a `noncommutative Steinitz form' for $\Delta$-lattices
to further reduce to the principal ideal problem of (H2)(a).

\subsection{Maximal orders of a particular form}\label{subsec:max-orders-particular-form}
We first briefly recall some facts and definitions from \cite{Reiner2003}.
Let $\mathfrak{a}$ be a fractional right $\Delta$-ideal, that is, a full right $\Delta$-lattice in $D$.
The left and right orders of $\mathfrak{a}$ are defined to be
\[
\mathcal{O}_{l}(\mathfrak{a}) := \{ x\in D \mid x\mathfrak{a} \subseteq \mathfrak{a} \}
\quad
\textrm{ and }
\quad
\mathcal{O}_{r}(\mathfrak{a}) := \{ x\in D \mid \mathfrak{a}x \subseteq \mathfrak{a} \},
\]
respectively. These are $\mathcal{O}$-orders in $D$.
Since $\Delta$ is a maximal $\mathcal{O}$-order in $D$ and $\mathfrak{a}$ is a right fractional $\Delta$-ideal, 
we must have $\mathcal{O}_{r}(\mathfrak{a}) = \Delta$.
We set $\Delta' := \mathcal{O}_{l}(\mathfrak{a})$ and note that this is also a maximal $\mathcal{O}$-order in $D$ by \cite[(23.10)]{Reiner2003}.
We define
\[
\mathfrak{a}^{-1} := \{  x \in D \mid \mathfrak{a}  x  \mathfrak{a} \subseteq \mathfrak{a} \},
\]
and note that this is a fractional left $\Delta$-ideal.
Then by \cite[(22.7)]{Reiner2003} we have
\begin{equation*}\label{eq:ideal-and-inverse}
\mathfrak{a}^{-1}  \mathfrak{a} = \Delta, \quad \mathfrak{a}  \mathfrak{a}^{-1} = \Delta', \quad (\mathfrak{a}^{-1})^{-1} = \mathfrak{a}. 
\end{equation*}
Moreover, 
$\mathcal{O}_{r}(\mathfrak{a}^{-1})=\Delta'$ and $\mathcal{O}_{l}(\mathfrak{a}^{-1})=\Delta$
by \cite[(22.8)]{Reiner2003}. 
Let
\[
\mathcal{M}_{\mathfrak{a},n} := 
\begin{pmatrix} \Delta & \dotsc & \Delta & \mathfrak a^{-1} \\
\vdots & \ddots & \vdots & \vdots \\
\Delta & \dotsc & \Delta & \mathfrak a^{-1} \\
\mathfrak a & \dotsc & \mathfrak a & \Delta' \end{pmatrix}
\]
denote the ring of all $n \times n$ matrices $(x_{ij})_{1 \leq i,j\leq n}$ where $x_{11}$ ranges over all elements of $\Delta$, \dots, $x_{1n}$ ranges over all elements of $\mathfrak a^{-1}$, and so on. (In the case $n=1$, we take $\mathcal{M}_{\mathfrak a,n}=\Delta'$.)
By \cite[(27.6)]{Reiner2003} every 
maximal $\mathcal{O}$-order in $\Mat_{n \times n}(D)$ is isomorphic to $\mathcal{M}_{\mathfrak{a},n}$ for some right ideal $\mathfrak{a}$ of $\Delta$.

\begin{prop}[{{\cite[\S{}6.3]{Bley2011}}}]\label{prop-maxordspec}
There exists an algorithm, that given a maximal $\mathcal{O}$-order $\mathcal{M}$ of $\Mat_{n \times n}(D)$, 
computes a right fractional ideal $\mathfrak{a}$ of $\Delta$ and an invertible matrix $S \in \GL_{n}(D)$
such that $\mathcal{M} = S \mathcal{M}_{\mathfrak{a},n} S^{-1}$.
\end{prop}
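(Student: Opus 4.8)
The plan is to reduce the problem to the commutative Steinitz-type theory for modules over Dedekind domains together with the classification of maximal orders in $\Mat_{n\times n}(D)$ recalled just above (\cite[(27.6)]{Reiner2003}). First I would recall that, by \cite[(27.6)]{Reiner2003}, every maximal $\mathcal{O}$-order in $\Mat_{n\times n}(D)$ is \emph{conjugate} (via an element of $\GL_n(D)=(\Mat_{n\times n}(D))^\times$) to one of the form $\mathcal{M}_{\mathfrak{a},n}$; the content of the proposition is to make this effective. So the task splits into two algorithmic sub-tasks: (1) given $\mathcal{M}$, produce \emph{some} right fractional $\Delta$-ideal $\mathfrak{a}$ together with a conjugating matrix $S\in\GL_n(D)$; and (2) verify that the output satisfies $\mathcal{M}=S\mathcal{M}_{\mathfrak{a},n}S^{-1}$, which once the construction is right is a routine finite computation comparing two $\mathcal{O}$-pseudo-bases.

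For sub-task (1) the natural approach is to exhibit $\mathcal{M}$ concretely as the endomorphism ring of a $\Delta$-lattice. Viewing $D^n$ (row or column vectors) as a right $D$-vector space, the order $\mathcal{M}$ acts on the unique (up to isomorphism over $D$) simple $\Mat_{n\times n}(D)$-module $V\cong D^n$, and $\mathcal{M}\cdot L$ for a suitable full right $\Delta$-lattice $L$ in $V$ recovers $\mathcal{M}=\mathcal{O}_l(L):=\{x\mid xL\subseteq L\}$; conversely the classification shows $L$ can be taken of the standard pseudo-basis form $\Delta e_1\oplus\cdots\oplus\Delta e_{n-1}\oplus\mathfrak{a}^{-1}e_n$. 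Concretely I would: compute the simple $\mathcal{M}$-lattice $L$ (e.g.\ as a minimal nonzero $\mathcal{M}$-submodule of $\mathcal{M}$ itself, or a column of an idempotent), so that $L$ is a full right $\Delta$-lattice in $D^n$; then apply a \emph{noncommutative Steinitz reduction} to put $L$ into the pseudo-basis shape above — this is exactly the column/row reduction over the (non-commutative but hereditary) order $\Delta$, bringing a generating pseudo-matrix to (pseudo-)Hermite form with all but the last steinitz-type ideal trivial. The base-change matrix realizing this reduction is the desired $S\in\GL_n(D)$, and reading off the last ideal gives $\mathfrak{a}$.

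Once $S$ and $\mathfrak{a}$ are in hand, I would finish by checking the equality $S^{-1}\mathcal{M}S=\mathcal{M}_{\mathfrak{a},n}$ directly: both sides are explicitly given $\mathcal{O}$-lattices in $\Mat_{n\times n}(D)$, so this reduces to a linear-algebra comparison over $\mathcal{O}$ (e.g.\ via Smith/Hermite normal form of the two pseudo-bases), and it must hold by the classification theorem together with the fact that $S$ was constructed to send a standard simple lattice to $L$. The only subtlety to spell out is that the conjugation $\mathcal{M}\mapsto S^{-1}\mathcal{M}S$ corresponds, under $\mathcal{M}=\mathcal{O}_l(L)$, to $L\mapsto S^{-1}L$, so matching lattices does match orders.

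The main obstacle I expect is sub-task (1): making the ``noncommutative Steinitz form'' genuinely algorithmic. Over a commutative Dedekind domain one has the Hermite/Steinitz pseudo-basis algorithm of \cite[1.4]{cohen-advcomp}; here one needs the analogue over the maximal order $\Delta$ in the skew field $D$, using that every finitely generated torsion-free $\Delta$-lattice is projective (since $\Delta$ is hereditary) and that $\Delta$ admits effective ideal arithmetic, left/right inversion, and a solution to the relevant module-membership and pseudo-reduction problems. This is where one leans on the earlier sections of \cite{Bley2011} (indeed the proposition is attributed to \cite[\S 6.3]{Bley2011}): the reduction of a $\Delta$-pseudo-matrix to the shape $(\Delta,\ldots,\Delta,\mathfrak{a}^{-1})$ is carried out there, and I would simply invoke it. Everything else — extracting the simple lattice $L$, forming $\mathcal{O}_l(L)$, and the final equality check — is standard linear algebra over $\mathcal{O}$ and over $D$.
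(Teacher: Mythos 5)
Your proposal is correct and follows essentially the same route as the cited proof in \cite[\S 6.3]{Bley2011} (the paper itself only cites this): realize $\mathcal{M}$ as the left order $\mathcal{O}_{l}(L)$ of an $\mathcal{M}$-stable full right $\Delta$-lattice $L$ in $D^{n}$ (maximality forcing equality), compute the noncommutative Steinitz/pseudo-Hermite form of $L$ over $\Delta$ as in \cite[\S 5.3, \S 5.4]{Bley2011}, and read off $S$ from the base change and $\mathfrak{a}$ from the Steinitz ideal. Only note the minor convention slip: with the paper's definition, $\mathcal{M}_{\mathfrak{a},n}$ is the left order of $e_{1}\Delta \oplus \dotsb \oplus e_{n-1}\Delta \oplus e_{n}\mathfrak{a}$ (a right $\Delta$-lattice with last coefficient ideal $\mathfrak{a}$, not $\mathfrak{a}^{-1}$), which does not affect the argument.
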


Hence by replacing $\mathcal{M}$ by $S^{-1}\mathcal{M} S$ and an $\mathcal{M}$-lattice $M$ by $S^{-1}M$, we may assume without loss of generality
that $\mathcal{M}$ is of the form $\mathcal{M}_{\mathfrak{a},n}$ for some right fractional $\Delta$-ideal $\mathfrak{a}$.

\subsection{Reducing from $\mathcal{M}$-lattices to $\Delta$-lattices}
We assume that $n \geq 2$ and that $\mathcal{M}$ is of the form $\mathcal{M}_{\mathfrak a,n}$ for some right fractional $\Delta$-ideal $\mathfrak{a}$.
By \cite[(21.7)]{Reiner2003} $\mathcal{M}$ is Morita equivalent to $\Delta$, that is, the category of left $\mathcal{M}$-modules is equivalent to the category
of left $\Delta$-modules.
We now make this equivalence partially explicit in the case of lattices by generalizing \cite[\S 6.2]{Bley2011} and adapting parts of \cite[\S 17]{MR1653294}.

We recall the convention that all modules are assumed to be left modules unless otherwise specified.
For $1 \leq i,j \leq n$ let $e_{ij} \in \Mat_{n \times n}(D)$ be the matrix with $1$ in position $(i,j)$ and $0$ everywhere else.
Then
\[
e_{ij}e_{kl} = \left\{ \begin{array}{ll}
e_{il} & \textrm{if } j=k,      \\
0 & \textrm{otherwise}.
\end{array}
\right.
\]
Recalling that $\mathcal{O}_{l}(\mathfrak{a}^{-1})=\Delta$, 
we see that $e_{11}\mathcal{M}$ (i.e.\ the `first row' of $\mathcal{M}$) is a $\Delta$-lattice.
Thus the assignment $M \mapsto e_{11}M$ induces a functor between $\mathcal{M}$-lattices and $\Delta$-lattices, 
where the corresponding map on morphisms is given by restriction,
\[
\Hom_{\Lambda}(M, N) \longrightarrow \Hom_{\Delta}(e_{11}M, e_{11}N), \ f \longmapsto f|_{e_{11}M}.
\]
One can show that this functor yields an equivalence of categories, but for our purposes the following result suffices. 

\begin{prop}\label{prop-redtoone}
Let $M$ and $N$ be $\mathcal{M}$-lattices. 
Then $M \cong N$ as $\mathcal{M}$-lattices if and only if $e_{11}M \cong e_{11}N$ as $\Delta$-lattices.
Moreover, given an isomorphism $g \colon e_{11} M \rightarrow e_{11}N$ of $\Delta$-lattices,
we can explicitly construct an isomorphism $f : M \rightarrow N$ of $\mathcal{M}$-lattices such that $f |_{e_{11}M} = g$.
\end{prop}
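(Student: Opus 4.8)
The plan is to make the Morita equivalence between $\mathcal{M} = \mathcal{M}_{\mathfrak{a},n}$ and $\Delta$ sufficiently explicit on lattices. Recall that $e_{11}\mathcal{M}$ is a $\Delta$-lattice (using $\mathcal{O}_l(\mathfrak{a}^{-1}) = \Delta$), and the functor $M \mapsto e_{11}M$ with morphisms restricted is the standard Morita functor. To go in the reverse direction, I would use the companion functor $L \mapsto \mathcal{M}e_{11} \otimes_\Delta L$, where $\mathcal{M}e_{11}$ (the `first column' of $\mathcal{M}$) is an $(\mathcal{M},\Delta)$-bimodule. The key algebraic facts, all standard Morita theory for the idempotent $e_{11}$ with $\mathcal{M}e_{11}\mathcal{M} = \mathcal{M}$ (which holds because $1 = \sum_j e_{jj}$ and each $e_{jj}$ is conjugate to $e_{11}$ inside $\Mat_{n\times n}(D)$, with the conjugating elements lying in $\mathcal{M}$ once one accounts for the ideal $\mathfrak{a}$), are: $e_{11}\mathcal{M} \otimes_{\mathcal{M}} \mathcal{M}e_{11} \cong e_{11}\mathcal{M}e_{11} \cong \Delta$ as rings, and $\mathcal{M}e_{11} \otimes_\Delta e_{11}\mathcal{M} \cong \mathcal{M}$ as $\mathcal{M}$-bimodules, via multiplication. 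These give the unit/counit isomorphisms showing the two functors are mutually inverse equivalences.

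Concretely, for the forward implication: given an $\mathcal{M}$-isomorphism $f\colon M \to N$, restriction gives the $\Delta$-isomorphism $f|_{e_{11}M}\colon e_{11}M \to e_{11}N$. For the reverse (and the `moreover'): given a $\Delta$-isomorphism $g\colon e_{11}M \to e_{11}N$, I would first observe that the natural multiplication map $\mu_M\colon \mathcal{M}e_{11} \otimes_\Delta e_{11}M \to M$ is an isomorphism of $\mathcal{M}$-lattices — this is where $\mathcal{M}e_{11}\mathcal{M} = \mathcal{M}$ is used, together with the fact that $M$ is a lattice (so $M = \mathcal{M}M = \mathcal{M}e_{11}\mathcal{M}M = \mathcal{M}e_{11}(e_{11}M)$, and one checks the tensor relation is exactly the defining relation, with no extra torsion). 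Then define
\[
f := \mu_N \circ (\mathrm{id}_{\mathcal{M}e_{11}} \otimes g) \circ \mu_M^{-1}\colon M \longrightarrow N,
\]
which is an $\mathcal{M}$-lattice isomorphism since each factor is. Finally, chasing an element $m \in e_{11}M$ through this composite: $\mu_M^{-1}(m) = e_{11}\otimes m$ (as $e_{11}\cdot e_{11} = e_{11}$), then $(\mathrm{id}\otimes g)(e_{11}\otimes m) = e_{11}\otimes g(m)$, then $\mu_N(e_{11}\otimes g(m)) = g(m) \in e_{11}N$; hence $f|_{e_{11}M} = g$, as required.

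For algorithmic content, I would note that all maps above are explicit once one has the matrix units $e_{ij}$ and $\Delta$-pseudo-bases: $e_{11}M$ is computed by applying $e_{11}$ to an $\mathcal{M}$-pseudo-basis of $M$, $\mu_M$ is literally matrix multiplication, and its inverse is obtained by writing $1 \in \mathcal{M}$ as a finite sum $\sum_k a_k e_{11} b_k$ with $a_k \in \mathcal{M}e_{11}$, $b_k \in e_{11}\mathcal{M}$ (concretely one can take the $e_{j1}$ and $e_{1j}$, adjusted by ideal generators so that they actually lie in $\mathcal{M}_{\mathfrak{a},n}$), so that $\mu_M^{-1}(x) = \sum_k a_k \otimes b_k x$.

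The main obstacle I anticipate is the bookkeeping caused by the ideal $\mathfrak{a}$ in the bottom row and right column of $\mathcal{M}_{\mathfrak{a},n}$: one must verify that $\mathcal{M}e_{11}$ is genuinely an $(\mathcal{M},\Delta)$-bimodule lattice (the entries in row $n$ lie in $\mathfrak{a}$, not $\Delta$), that $e_{11}\mathcal{M}e_{11} = \Delta$ on the nose rather than $\mathfrak{a}^{-1}\mathfrak{a} = \Delta$ requiring the identities from \cite[(22.7)]{Reiner2003}, and that the $e_{j1}, e_{1j}$ one uses to express $1$ really are elements of $\mathcal{M}_{\mathfrak{a},n}$ after scaling. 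Once these identifications are set up carefully — exactly the kind of matrix-over-ideals computation that \cite[\S 6.2]{Bley2011} and \cite[\S 17]{MR1653294} handle — the argument is the routine Morita dictionary. The hypothesis $n \geq 2$ plays no essential role in the statement (the case $n=1$ is trivial since then $e_{11}M = M$ up to the identification $\mathcal{M} = \Delta'$ and one works with the maximal order $\Delta'$ in place of $\Delta$), but is assumed because the surrounding reduction already disposes of $n=1$ separately.
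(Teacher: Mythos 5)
Your proposal is correct, and it reaches the same isomorphism as the paper, but by a different route: you run the abstract Morita machinery for the full idempotent $e_{11}$, whereas the paper deliberately avoids it and verifies everything by hand. Concretely, the paper extends $g$ to a $D$-linear map $g'$ on $e_{11}FM$, defines $f_i'(e_{ii}x)=e_{i1}g'(e_{1i}x)$ on each summand of $M=\bigoplus_i e_{ii}M$, proves $f_i'(e_{ii}M)=e_{ii}N$ entry-by-entry (treating $i<n$ and $i=n$ separately, the latter via $1\in\Delta'=\mathfrak{a}\mathfrak{a}^{-1}$ and the containments $\mathfrak{a}^{-1}e_{1n},\,e_{n1}\mathfrak{a}\subseteq\mathcal{M}$), and checks injectivity of the summed map directly. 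You instead use that $e_{11}$ is full ($\mathcal{M}e_{11}\mathcal{M}=\mathcal{M}$, which again boils down to $\mathfrak{a}\mathfrak{a}^{-1}=\Delta'\ni 1$, a consequence of maximality of $\Delta$ via \cite[(22.7)]{Reiner2003}), so the multiplication map $\mu_M\colon\mathcal{M}e_{11}\otimes_\Delta e_{11}M\to M$ is an isomorphism, and transport $g$ through the tensor functor; your section $x\mapsto\sum_k a_k\otimes b_kx$ with $1=\sum_k a_kb_k$, $a_k\in\mathcal{M}e_{11}$, $b_k\in e_{11}\mathcal{M}$, makes this fully constructive and in fact recovers the paper's map: taking $a_k=e_{k1}$, $b_k=e_{1k}$ for $k<n$ and the terms $\alpha_je_{n1}$, $\beta_je_{1n}$ with $\sum_j\alpha_j\beta_j=1$, $\alpha_j\in\mathfrak{a}$, $\beta_j\in\mathfrak{a}^{-1}$, your $f(x)=\sum_k a_k\,g(b_kx)$ coincides with $\sum_i e_{i1}g'(e_{1i}x)$. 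What your route buys is brevity and generality (the counit isomorphism for a full idempotent is purely ring-theoretic, so there is genuinely no torsion issue to worry about); what the paper's route buys is a self-contained matrix-entry verification, which is closer to what gets implemented.

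Two points of care, neither a real gap since you flag them: the claim that each $e_{jj}$ is conjugate to $e_{11}$ by elements of $\mathcal{M}$ fails for $j=n$ unless $\mathfrak{a}$ is principal, so fullness must be argued via the finite sum $e_{nn}=\sum_j(\alpha_je_{n1})(\beta_je_{1n})$ rather than a single scaled pair $e_{n1},e_{1n}$ --- your later formulation $1=\sum_k a_ke_{11}b_k$ is the correct one; and for $n=1$ one has $e_{11}\mathcal{M}e_{11}=\Delta'=\mathcal{O}_l(\mathfrak{a})$ rather than $\Delta$, which is exactly why the paper assumes $n\geq 2$ here and handles $n=1$ separately in \S\ref{subsec:description-of-step-e}, as you correctly observe.
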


\begin{proof}
We note that all elements of $D$ commute with $e_{ij}$ for all $1 \leq i,j \leq n$.
This together with the assumption that $n \geq 2$ will be used throughout.

Suppose that $f \colon M \rightarrow N$ is an isomorphism of $\mathcal{M}$-lattices.
Then it is clear that the restriction $f|_{e_{11}M} : e_{11}M \rightarrow e_{11}N$ is an isomorphism of $\Delta$-lattices.

Suppose conversely that we are given an isomorphism $g: e_{11}M \rightarrow e_{11}N$ of $\Delta$-lattices.
Let $g' : e_{11} F M \rightarrow e_{11} F N$ be the unique extension of $g$ to an isomorphism of $D$-modules,
which exists by Lemma \ref{lem:ext}. For $i=1,\ldots,n$ we define
\[
f_{i}'  :  e_{ii} F M \longrightarrow e_{ii} F N, \quad e_{ii}x \longmapsto e_{i1}g'(e_{1i}x) = e_{ii}e_{i1}g'(e_{11}e_{1i}e_{ii}x),
\]
where the last equality shows that these maps are well defined.

Suppose that $1 \leq i < n$. 
Then $e_{1i} = e_{1i}e_{ii}$ and $e_{i1}=e_{i1}e_{11}$ are both elements of $\mathcal{M}$ and so we have
\[
f_{i}'(e_{ii}M) = e_{ii}e_{i1}g'(e_{11}e_{1i}e_{ii} M) \subseteq e_{ii}e_{i1} g'(e_{11}M) =  e_{ii}e_{i1}e_{11}N \subseteq e_{ii}N.
\]
Similarly, we have
\[
e_{ii}N = e_{i1}e_{11}e_{1i}N 
\subseteq e_{i1}e_{11} N = e_{i1} g'(e_{11}M) = e_{i1} g'(e_{1i}e_{i1}M)
\subseteq e_{i1}g'(e_{1i}M) = f_{i}'(e_{ii}M).
\]
Hence $f_{i}'(e_{ii}M) = e_{ii}N$.

Now consider the case $i=n$.
Since $1 \in \Delta' = \mathfrak{a}\mathfrak{a}^{-1}$ and both
$\mathfrak{a}^{-1} e_{1n}$ and $e_{n1} \mathfrak{a}$ are contained in $\mathcal{M}$, we have
\begin{align*}
f_{n}'(e_{nn}M) 
&= e_{n1} g'(e_{1n}M) 
= e_{n1} g'(e_{11}e_{1n}M)  \\
& \subseteq e_{n1}g'(e_{11} \mathfrak{a} \mathfrak{a}^{-1} e_{1n}M) \\
& \subseteq e_{n1}g'(e_{11}\mathfrak{a}M) 
= e_{n1} e_{11} \mathfrak{a} N = e_{nn}e_{n1}\mathfrak{a}N \\
& \subseteq e_{nn} N. 
\end{align*}
Similarly, we have
\begin{align*}
e_{nn}N & = e_{n1}e_{11}e_{1n}N \\ 
& \subseteq e_{n1}e_{11} \mathfrak{a} \mathfrak{a}^{-1} e_{1n}N\\
& \subseteq e_{n1}e_{11} \mathfrak{a} N = e_{n1} g'(e_{11}\mathfrak{a}M) = e_{n1} g'(e_{1n}e_{n1} \mathfrak{a} M)\\
& \subseteq e_{n1}g'(e_{1n}M) = f_{n}'(e_{nn}M). 
\end{align*}
Hence $f_{n}'(e_{nn}M) = e_{nn}N$.

We have shown that for each $i$, the map $f_{i}'$ restricts to a well-defined surjective map $f_{i} : e_{ii} N \rightarrow e_{ii}M$.
Since $e_{11} + \dotsb + e_{nn}$ is the $n \times n$ identity matrix, we have a decomposition $M = e_{11}M \oplus \dots \oplus e_{nn} M$.
Define 
\[
f : M \longrightarrow N, \quad x \mapsto f_{1}(e_{11}x) + \cdots + f_{n}(e_{nn}x),
\]
and note that this is a homomorphism of $\mathcal{M}$-lattices by construction.
Since each $f_{i}$ is surjective and
$N = e_{11}N \oplus \cdots \oplus e_{nn} N$,
we see that $f$ is also surjective.
It remains to show that $f$ is injective.
Let $x \in M$ and suppose that $f(x) = 0$.
Since the elements $e_{ii}$ are pairwise orthogonal, this implies that $e_{i1}g'(e_{1i}x) = f_{i}(e_{ii}x) = 0$ for each $i$.
Multiplying on the left by $e_{1i}$ gives
\[
0 = e_{1i}e_{i1}g'(e_{1i}x) = e_{11}g'(e_{1i}x) = g'(e_{1i}x).
\]
Since $g'$ is injective this implies that $e_{1i}x = 0$, and multiplying on the left by $e_{i1}$ then
gives $e_{i1}e_{1i}x=e_{ii}x = 0$. Hence $x=e_{11}x + \cdots + e_{nn}x = 0$.
\end{proof}

\begin{remark}
If $\mathcal{M}=\Mat_{n \times n}(\Delta)$ then the construction given in the proof of Proposition \ref{prop-redtoone} simplifies considerably
(also see  \cite[\S 17]{MR1653294} for a discussion of the explicit Morita equivalence of $\mathcal{M}$ and $\Delta$ in this case). 
Moreover, if $\mathfrak{a}$ is a principal fractional right ideal of $\Delta$ (i.e.\ $\mathfrak{a}=\xi \Delta$ for some $\xi \in D^{\times}$)
then there exists $S \in \GL_{n}(D)$ such that $S \mathcal{M}_{\mathfrak{a},n} S^{-1} = \Mat_{n \times n}(\Delta)$ and
so we are reduced to the aforementioned situation.
In particular, if $D=F$ and $F$ has class number $1$, then every fractional right ideal of $\mathcal{O}=\Delta$ is principal
(this statement can be generalized to the case $D \neq F$ by using \cite[(49.32)]{curtisandreiner_vol2}).
\end{remark}

\subsection{Noncommutative Steinitz form for $\Delta$-lattices}\label{subsec:non-cmm-Steinitz-form}
Let $M$ be a $\Delta$-lattice. Then there exists $r \in \Z_{\geq 1}$ such that $FM \cong D^{(r)}$.
Moreover, by \cite[(27.8)]{Reiner2003} there exist elements $m_{1},\ldots,m_{r} \in FM$ and a fractional left $\Delta$-ideal $\mathfrak{b}$ such that
\[
M = \Delta m_{1} \oplus \dotsb \oplus \Delta m_{r-1} \oplus \mathfrak{b} m_{r}.
\]
Such a decomposition is known as noncommutative Steinitz form of $M$. An algorithm for computing it is described in \cite[\S 5.3, \S 5.4]{Bley2011}.
Moreover, $M$ is a locally free $\Delta$-lattice by \cite[(18.10)]{Reiner2003}.
Thus if $\Delta$ has the locally free cancellation property of (H2)(b) (described in \S \ref{subsec:locally-free-cancellation})
then $r$ and the isomorphism class of $\mathfrak{b}$ uniquely determine the isomorphism class of $M$.
We therefore have the following result.

\begin{lemma}\label{lem-isosteinitz}
Let $M$ and $N$ be two $\Delta$-lattices of equal rank $r \in \Z_{\geq 1}$.
Let $\mathfrak{b}$ and $\mathfrak{c}$ be left fractional $\Delta$-ideals such that
\[ 
M \cong \Delta^{(r-1)} \oplus \mathfrak{b}
\quad \text{and} \quad
N \cong \Delta^{(r-1)} \oplus \mathfrak{c}.
\]
If $\mathfrak{b} \cong \mathfrak{c}$ as left fractional $\Delta$-ideals, then $M$ and $N$ are isomorphic.
Moreover, if $\Delta$ has the locally free cancellation property of \textup{(H2)(b)}, then the converse is also true.
\end{lemma}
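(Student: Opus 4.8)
The plan is to treat the two implications separately; both are short consequences of material already in place. For the forward implication I would simply chain the given isomorphisms: if $\mathfrak{b} \cong \mathfrak{c}$ as left $\Delta$-lattices, then
\[
M \cong \Delta^{(r-1)} \oplus \mathfrak{b} \cong \Delta^{(r-1)} \oplus \mathfrak{c} \cong N,
\]
so $M$ and $N$ are isomorphic. No hypothesis beyond the stated decompositions is used here.

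For the converse I would invoke the locally free cancellation property of \textup{(H2)(b)}. First I would record that every fractional left $\Delta$-ideal is a locally free $\Delta$-lattice — this is \cite[(18.10)]{Reiner2003}, already used above for general Steinitz forms — and of course $\Delta^{(r-1)}$ is free, hence locally free. Now from $M \cong N$ together with the two given decompositions one obtains
\[
\mathfrak{b} \oplus \Delta^{(r-1)} \cong M \cong N \cong \mathfrak{c} \oplus \Delta^{(r-1)},
\]
that is, $\mathfrak{b} \oplus \Delta^{(k)} \cong \mathfrak{c} \oplus \Delta^{(k)}$ with $k = r-1 \in \Z_{\geq 0}$. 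Applying the locally free cancellation property (whose hypotheses are met since $\mathfrak{b}$ and $\mathfrak{c}$ are locally free) yields $\mathfrak{b} \cong \mathfrak{c}$ as left $\Delta$-lattices, as claimed.

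I do not expect a genuine obstacle here: the only point requiring care is checking that the hypotheses of the cancellation property are really satisfied — namely that $\mathfrak{b}$ and $\mathfrak{c}$ are locally free (automatic for fractional ideals of the maximal order $\Delta$) and that the relation extracted from $M \cong N$ is genuinely of the shape $X \oplus \Delta^{(k)} \cong Y \oplus \Delta^{(k)}$ demanded in the definition of locally free cancellation. Both are immediate, so the proof reduces to the two displays above.
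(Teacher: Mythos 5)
Your proof is correct and matches the paper's (implicit) argument: the paper derives the lemma directly from the Steinitz-form discussion, with the forward direction being the same trivial chaining of isomorphisms and the converse being exactly the application of locally free cancellation to $\mathfrak{b}\oplus\Delta^{(r-1)}\cong\mathfrak{c}\oplus\Delta^{(r-1)}$, using that fractional $\Delta$-ideals are locally free by \cite[(18.10)]{Reiner2003}. Nothing further is needed.
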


\subsection{Step (e) of Algorithm~\ref{alg:find-iso}}\label{subsec:description-of-step-e}
As input, we take a maximal order $\mathcal{M}_{i}$ in $A_{i}$ and $\mathcal{M}_{i}$-lattices $\mathcal{M}_{i}X$ and $\mathcal{M}_{i}Y$ ($i$ fixed).
We describe an algorithm that either computes an isomorphism $f : \mathcal{M}_{i}X \rightarrow \mathcal{M}_{i}Y$
of $\mathcal{M}_{i}$-lattices or determines that no such isomorphism exists. 

\begin{enumerate}
\item Using (H1) we can suppose without loss of generality that $A_{i} = \Mat_{n_{i} \times n_{i}}(D_{i})$ for some $n_{i} \in \Z_{\geq 1}$ and some skew field $D_{i}$
with center $K_{i}$. We henceforth drop the subscripts from $D_{i}$ and $n_{i}$ and write $F$ in place of $K_{i}$.
Let $\mathcal{O}=\mathcal{O}_{F}$.
\item Suppose $n=1$.
Then $\mathcal{M}_{i}=\Delta$ for some maximal $\mathcal{O}$-order $\Delta$ in $D$.
Set 
\[
M:=\mathcal{M}_{i}X=e_{11}\mathcal{M}_{i}X
\quad \text{and} \quad
N:=\mathcal{M}_{i}Y=e_{11}\mathcal{M}_{i}Y,
\]
and skip to step (iv) below.
\item Suppose $n \geq 2$. Fix any maximal $\mathcal{O}$-order $\Delta$ in $D$.
Compute $S \in \GL_{n}(D)$ as in  Proposition~\ref{prop-maxordspec}.
Set
\[
\mathcal{M}_{i}' := S^{-1}\mathcal{M}_{i} S,
\quad M:=S^{-1}\mathcal{M}_{i}X
\quad \text{and}
\quad N:=S^{-1}\mathcal{M}_{i}Y.
\]
The problem is reduced to either computing an isomorphism $M \rightarrow N$ of $\mathcal{M}_{i}'$-lattices
or determining that no such isomorphism exists.
\item \label{step:X-Y-decomps}
Use \cite[\S 5.3, \S 5.4]{Bley2011} to compute decompositions
\[
\qquad
e_{11}M = \Delta m_{1} \oplus \dotsb \oplus \Delta m_{r-1} \oplus \mathfrak{b} m_{r}
\quad \text{and} \quad
e_{11}N = \Delta n_{1} \oplus \dotsb \oplus \Delta n_{s-1} \oplus \mathfrak{c} n_{s}.
\]
If $r \neq s$ then the algorithm terminates with the conclusion that $\mathcal{M}_{i}X$ and $\mathcal{M}_{i}Y$
are not isomorphic as $\mathcal{M}_{i}$-lattices.
\item
Use (H2)(a) to check whether $\mathfrak{b} \cong \mathfrak{c}$ as fractional left $\Delta$-ideals, 
and if so, return $\xi \in D$ such that $\mathfrak{b} = \mathfrak{c} \xi$.
Otherwise, (H2)(b) shows that $e_{11}M$ and $e_{11}N$ are not isomorphic as $\Delta$-lattices
and so the algorithm terminates with the conclusion that $\mathcal{M}_{i}X$ and $\mathcal{M}_{i}Y$ are not isomorphic as $\mathcal{M}_{i}$-lattices.
(In the case $n=1$ this is true because $e_{11}M=\mathcal{M}_{i}X$ and $e_{11}N=\mathcal{M}_{i}Y$ and in the case $n \geq 2$
this follows from step (iii), Proposition \ref{prop-redtoone} and Lemma \ref{lem-isosteinitz}.)
\item
If a suitable $\xi \in D$ is found in step (v) then, together with the decompositions of $e_{11}M$ and $e_{11}N$ found in step (iv),
it can be used to compute an explicit isomorphism $e_{11}M \cong e_{11}N$ of $\Delta$-lattices.
\item
If $n=1$, then we are already done since $\mathcal{M}_{i}=\Delta$, $e_{11}M=\mathcal{M}_{i}X$ and $e_{11}N=\mathcal{M}_{i}Y$. 
If $n \geq 2$ then use  Proposition~\ref{prop-redtoone} to construct an isomorphism $M \rightarrow N$ of $\mathcal{M}_{i}'$-lattices;
using step (iii), we thus obtain an isomorphism $\mathcal{M}_{i}X \rightarrow \mathcal{M}_{i}Y$ of $\mathcal{M}_{i}$-lattices.
\end{enumerate}

\begin{remark}
If $n=1$ then $\Delta$ is uniquely determined by $\mathcal{M}_{i}$.
Moreover, the choice of $\mathcal{M}_{i}$ may be limited by the requirement that $\Lambda \subseteq \oplus_{i} \mathcal{M}_{i}$.
However, if $n \geq 2$ then we can make any choice of $\Delta$.
\end{remark}

\subsection{Working without the locally free cancellation property (H2)(b)}\label{subsec:without-locally-free-cancellation}

\begin{remark}\label{rmk:r=1}
Suppose that $KM \cong KN \cong D \cong A_{i}$, that is, $n=r=s=1$ and $\mathcal{M}_{i}=\Delta$ in the notation above.
Then the problem of determining whether $e_{11}N \cong e_{11}M$ as $\Delta$-lattices
is equivalent to checking whether $\mathfrak{b} \cong \mathfrak{c}$ as fractional left $\Delta$-ideals.
This can be done with (H2)(a) alone and so (H2)(b) is not necessary in this case.
Thus (H2)(b) can be replaced with a weaker but more complicated to state hypothesis,
which corresponds to (H2\textprime)(a) of \cite{Bley2011}.
\end{remark}

\begin{example}\label{ex:Q32-H2b}
Let $Q_{32}$ be the quaternion group of order $32$ and let $A=\Q[Q_{32}]$.
Let $\Lambda$ be any order in $A$ and let $\mathcal{M}$ be a maximal order such that $\Lambda \subseteq \mathcal{M} \subseteq A$.
Then \cite[Theorem II]{Swan1983} shows that $\mathcal{M}$ does not have the locally free cancellation property.
Each simple component $A_{i}$ of $A$ is isomorphic to either 
(i) a matrix ring over a number field or (ii) a totally definite quaternion algebra.
By the discussion in \S \ref{subsec:locally-free-cancellation}, each $\mathcal{M}_{i}$ in a component $A_{i}$ of case (i) does have the locally free cancellation property. The remaining $\mathcal{M}_{i}$ in components $A_{i}$ of case (ii) do not have the locally free cancellation property.
However, if $X$ and $Y$ are $\Lambda$-lattices such that $\Q X \cong \Q Y \cong A$ then Remark \ref{rmk:r=1}
shows that Algorithm~\ref{alg:find-iso} will always correctly determine whether $\mathcal{M}X$ and $\mathcal{M}Y$ are isomorphic
as $\mathcal{M}$-lattices in step (e); the other steps will run as usual as they do not depend on (H2).
\end{example}

\begin{remark}\label{rmk:run-without-locally-free-cancellation}
The algorithm described in \S \ref{subsec:description-of-step-e} can still be run without (H2)(b), but it may not come to a conclusion.
If it is found that $\mathfrak{b} \cong \mathfrak{c}$ as fractional left $\Delta$-ideals in step (v), 
then the algorithm will go on to construct an isomorphism $\mathcal{M}_{i}X \rightarrow \mathcal{M}_{i}Y$ of $\mathcal{M}_{i}$-lattices,
whether or not  $\Delta$ has the locally free cancellation property.
However, if $\mathfrak{b} \not \cong \mathfrak{c}$ and $\Delta$ does not have the locally free cancellation property, then it is not possible to 
conclude that $e_{11}M \not \cong e_{11}N$ and thus the algorithm cannot determine whether $\mathcal{M}_{i}X$ and $\mathcal{M}_{i}Y$ are
isomorphic as $\mathcal{M}_{i}$-lattices.
\end{remark}

\section{Saturation of lattices and computation of homomorphism groups}\label{sec:saturation-homgroups}

We describe how to compute homomorphism groups between lattices over orders and 
along the way give algorithms to compute saturations of lattices.
Thus, in particular, we can compute endomorphism rings of lattices over orders, as required for step (g) of Algorithm \ref{alg:find-iso}. 
Some of the results presented here were already given in the first named author's Ph.D.\ thesis \cite[\S 1, \S 11]{Hofmann2016PhD}.

\subsection{Saturations of lattices over integral domains}\label{subsec:saturations-of-lattices}
Let $R$ be an integral domain with field of fractions $K$. To avoid trivialities, we assume that $R \neq K$.
Let $M$ be an $R$-lattice, that is, a finitely generated torsion-free $R$-module.
Let $N$ be an $R$-sublattice of $M$.
We say that $N$ is saturated (in $M$) if $M/N$ is torsion-free as an $R$-module
(note that the term `$R$-pure sublattice' is used in \cite[p.\ 45]{Reiner2003}).
Equivalently, $N$ is saturated in $M$ if for each $r \in R$ we have $N \cap rM = rN$.
The unique minimal $R$-lattice $L$ that is saturated in $M$ and satisfies $N \subseteq L \subseteq M$ is called the saturation of $N$ in $M$.

\begin{lemma}\label{lem:saturation}
The saturation of $N$ in $M$ is $KN \cap M$.
\end{lemma}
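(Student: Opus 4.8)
The plan is to show the two inclusions $KN \cap M \subseteq \text{(saturation of }N\text{ in }M)$ and vice versa, after first checking that $KN \cap M$ is itself a saturated $R$-sublattice of $M$ containing $N$; minimality then follows from the characterization of the saturation as the \emph{unique minimal} such lattice, so these observations together pin down $KN \cap M$ as the saturation.

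First I would verify the basic structural facts. Since $M$ is an $R$-lattice, it embeds in the $K$-vector space $KM$, and $KN$ is the $K$-subspace of $KM$ spanned by $N$; thus $L := KN \cap M$ makes sense as an $R$-submodule of $M$. It is finitely generated (as an $R$-submodule of the Noetherian-over-$R$ module $M$, or simply because $R$-submodules of lattices are lattices over a Noetherian domain — actually the cleanest route: $L \subseteq M$ and $M$ is finitely generated torsion-free, and for $R$ Dedekind or more generally Noetherian, submodules of finitely generated modules are finitely generated; since the ambient setup of this subsection only assumes $R$ an integral domain, I would instead note $L \subseteq KN$ which is finite-dimensional, and $L$ is an $R$-submodule of $M$ hence torsion-free, and conclude finite generation from $M$ Noetherian if needed, or just observe that for the purpose of the lemma one only needs $L$ to be an $R$-lattice, which holds since $L$ is a torsion-free $R$-module trapped between $N$ and $M$). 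Clearly $N \subseteq KN \cap M = L$ since $N \subseteq KN$ and $N \subseteq M$, and $L \subseteq M$.

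Next I would show $L$ is saturated in $M$, i.e.\ $M/L$ is torsion-free. Suppose $m \in M$ and $r \in R \setminus \{0\}$ with $rm \in L = KN \cap M$. Then $rm \in KN$, so $m = \frac{1}{r}(rm) \in KN$ (using that $KN$ is a $K$-vector space), and $m \in M$ by hypothesis, hence $m \in KN \cap M = L$. Thus $M/L$ has no torsion, so $L$ is saturated.

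Finally I would check minimality, which is where the genuine content lies: if $L'$ is any saturated $R$-sublattice with $N \subseteq L' \subseteq M$, I claim $L \subseteq L'$. Take $x \in L = KN \cap M$. Since $x \in KN$, there is a common denominator $r \in R \setminus\{0\}$ with $rx \in N \subseteq L'$. But $x \in M$ as well, and $L'$ is saturated in $M$, so $rx \in L' \cap rM$... more directly: $rx \in L'$ and $x \in M$, and saturation of $L'$ in $M$ means $L' \cap rM = rL'$; since $rx \in L'$ and $rx \in rM$, we get $rx \in rL'$, so $rx = rx'$ for some $x' \in L'$, and as $M$ (hence $L'$) is torsion-free, $x = x' \in L'$. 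Therefore $L \subseteq L'$, so $L$ is the minimal saturated sublattice containing $N$, i.e.\ $L = KN \cap M$ is the saturation. The main obstacle is simply being careful that "$rx \in L'$ with $x \in M$ forces $x \in L'$" uses precisely the saturation hypothesis on $L'$ (together with torsion-freeness of $M$), rather than anything stronger; everything else is routine.
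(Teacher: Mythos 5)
Your proof is correct in substance, and it is necessarily a different route from the paper's, because the paper does not argue the lemma at all: it simply cites Reiner, \emph{Maximal Orders} (4.0) (and the first author's thesis). Your two essential steps are exactly the right ones and are carried out correctly: (1) $KN\cap M$ is saturated, since $rm\in KN$ with $r\neq 0$ forces $m=r^{-1}(rm)\in KN$ because $KN$ is a $K$-subspace of $KM$; and (2) minimality, via clearing denominators to get $rx\in N\subseteq L'$ for $x\in KN\cap M$ and then using saturation of $L'$ (equivalently, torsion-freeness of $M/L'$) together with torsion-freeness of $M$ to conclude $x\in L'$. Note that (2) in fact only uses that $L'$ is a saturated \emph{submodule} containing $N$, which is a small bonus over what the definition requires.

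The one weak spot is your justification that $KN\cap M$ is an $R$-lattice, i.e.\ finitely generated, which the statement implicitly requires (the saturation is by definition a lattice). Being ``a torsion-free $R$-module trapped between $N$ and $M$'' does not imply finite generation over an arbitrary integral domain: over a non-Noetherian (non-coherent) domain, even an intersection of the form $KN\cap M$ with $N,M$ finitely generated can fail to be finitely generated (already for $M=R^{2}$ and $N$ of rank $1$ this amounts to an intersection of two principal fractional ideals, which need not be finitely generated in general). You should simply commit to the other justification you float: submodules of finitely generated modules are finitely generated when $R$ is Noetherian, which covers every use in the paper (where $R=\mathcal{O}_{K}$ is Dedekind) and matches the hypotheses under which the cited result of Reiner is stated; the section's blanket assumption ``integral domain'' is looser than what either proof actually supports on this point. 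With that adjustment your argument is a complete, self-contained proof.
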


\begin{proof}
This follows directly from \cite[(4.0)]{Reiner2003};
also see \cite[Lemma 1.33(ii)]{Hofmann2016PhD}.
\end{proof}

\subsection{Computation of saturations of lattices over rings of integers}\label{subsec:comp-sat}

Let $K$ be a number field with ring of integers $\mathcal{O}=\mathcal{O}_{K}$.
We now give a result that can be used to compute saturations of $\mathcal{O}$-lattices.
A key ingredient is the pseudo-Hermite normal form as described in \cite[1.4.6]{cohen-advcomp}.
A similar result that uses the pseudo-Smith normal form \cite[1.7.2]{cohen-advcomp} instead
is given in the first named author's Ph.D.\ thesis \cite[Lemma 1.35]{Hofmann2016PhD}.
In the special case $\mathcal{O}=\Z$, both results are contained in Steel's Ph.D.\ thesis \cite[1.7.9]{Steel2012}.
While the result below is folklore, to the best of the authors' knowledge no proof has been published until now.

\begin{lemma}\label{lem:satalg2}
Let $M$ and $N$ be $\mathcal{O}$-lattices with pseudo-bases
$((\alpha_{i})_{1\leq i \leq m}, (\mathfrak{a}_{i})_{1 \leq i \leq m})$ and 
$((\beta_{i})_{1 \leq i \leq n}, (\mathfrak{b}_{i})_{1 \leq i \leq n})$, respectively.
Suppose that $N$ is a sublattice of $M$ (so $n \leq m$).
Let $A \in \operatorname{Mat}_{m \times n}(K)$ be the unique matrix satisfying
\[
(\beta_{1}, \beta_{2}, \dotsc, \beta_{n}) = (\alpha_{1}, \alpha_{2}, \dotsc,\alpha_{m}) A.
\]
Let $((\mathfrak{h}_{i})_{1\leq i \leq m}, H)$ be a pseudo-Hermite normal form of $((\mathfrak{a}_{i}^{-1})_{1 \leq i \leq m}, A^t)$ and let $U \in \GL_{m}(K)$
be the corresponding transformation matrix.
Define
\[
  (\omega_{1}, \omega_{2}, \dotsc, \omega_{m}) := (\alpha_{1}, \alpha_{2}, \dotsc,\alpha_{m})U^{-t}.
\]
Then $((\mathfrak h_{i}^{-1})_{m -n + 1 \leq i \leq m}, (\omega_{i})_{m - n + 1 \leq i \leq m})$ is a pseudo-basis of the saturation of $N$ in $M$.
\end{lemma}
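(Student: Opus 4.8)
The plan is to reduce the statement to a fact about dual lattices and then unwind the definition of the pseudo-Hermite normal form. By Lemma~\ref{lem:saturation} the saturation of $N$ in $M$ equals $KN \cap M$, so it suffices to prove that $KN \cap M = \bigoplus_{i=m-n+1}^{m} \mathfrak{h}_i^{-1}\omega_i$.

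First I would pass to coordinates. Using the $K$-basis $\alpha_1,\dots,\alpha_m$ of $KM$, identify $KM$ with $K^m$; then $M$ becomes $\{(c_1,\dots,c_m)\in K^m : c_i\in\mathfrak{a}_i\text{ for all }i\}$, the coordinate vector of $\beta_j$ is the $j$-th column of $A$, and (since $\beta_1,\dots,\beta_n$ are $K$-linearly independent) $A$ has rank $n$ with column span $KN$. Parametrising $KN$ by $\beta_1,\dots,\beta_n$, the inclusion $KN \hookrightarrow KM$ is multiplication by $A$, so a vector of $KN$ with coordinate vector $w \in K^n$ lies in $M$ if and only if $(Aw)_i \in \mathfrak{a}_i$ for all $i$. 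Writing $\rho_i \in K^n$ for the transpose of the $i$-th row of $A$, this reads $\langle \rho_i, w\rangle \in \mathfrak{a}_i$ for all $i$, which (as $\mathfrak{a}_i\mathfrak{a}_i^{-1}=\mathcal{O}$) is equivalent to $\langle x\rho_i, w\rangle \in \mathcal{O}$ for every $x \in \mathfrak{a}_i^{-1}$. Combining over $i$, I would conclude that $KN \cap M$, viewed inside $K^n$, is exactly the $\mathcal{O}$-dual $\mathfrak{L}^{\vee} := \{w \in K^n : \langle y,w\rangle \in \mathcal{O}\text{ for all }y \in \mathfrak{L}\}$ of the lattice $\mathfrak{L} := \sum_{i=1}^{m}\mathfrak{a}_i^{-1}\rho_i \subseteq K^n$.

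Next I would feed in the pseudo-Hermite normal form. By construction $\mathfrak{L}$ is the $\mathcal{O}$-module represented by the pseudo-matrix $((\mathfrak{a}_i^{-1}),A^t)$ (the $\rho_i$ being the columns of $A^t$), so the defining properties of its pseudo-HNF give $A^t U = H$ together with $\mathfrak{L} = \bigoplus_{i=m-n+1}^{m}\mathfrak{h}_i h_i$, where $h_i$ denotes the $i$-th column of $H$, the columns $h_1,\dots,h_{m-n}$ vanish, and $h_{m-n+1},\dots,h_m$ form a $K$-basis of $K^n$ (here one uses $\operatorname{rank}A^t=n$). Dualising a pseudo-basis inverts the coefficient ideals and replaces the basis by its dual basis, so $\mathfrak{L}^{\vee} = \bigoplus_{i=m-n+1}^{m}\mathfrak{h}_i^{-1}h_i^{*}$, where $(h_i^{*})_{i>m-n}$ is the basis of $K^n$ dual to $(h_i)_{i>m-n}$. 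Transporting this back into $KM$ along the injection $w \mapsto Aw$ yields $KN \cap M = \bigoplus_{i=m-n+1}^{m}\mathfrak{h}_i^{-1}(Ah_i^{*})$, so it remains only to identify $Ah_i^{*}$ with $\omega_i$ for $i>m-n$.

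Finally I would check $Ah_i^{*}=\omega_i$ for $i>m-n$, the coordinate vector of $\omega_i$ being the transpose of the $i$-th row of $U^{-1}$. As the columns $U^{(1)},\dots,U^{(m)}$ of $U$ form a $K$-basis of $K^m$, it suffices to pair both sides with each $U^{(k)}$: on the one hand $\langle Ah_i^{*},U^{(k)}\rangle = (h_i^{*})^{t}(A^t U)^{(k)} = (h_i^{*})^{t}h_k$, which equals $0$ for $k\leq m-n$ and $\delta_{ik}$ for $k>m-n$; on the other hand $\langle\omega_i,U^{(k)}\rangle = (U^{-1}U)_{ik} = \delta_{ik}$. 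For $i>m-n$ the two agree for every $k$, hence $Ah_i^{*}=\omega_i$ and the proof is complete. I expect the main obstacle to be almost entirely bookkeeping: pinning down which transpose/inverse convention the pseudo-HNF routine of \cite[1.4.6]{cohen-advcomp} actually uses (so that the transformation genuinely satisfies $A^t U = H$ with the zero columns occupying positions $1,\dots,m-n$), and keeping the identification of $KN\cap M$ with $\mathfrak{L}^{\vee}$ straight; once those conventions are fixed, each individual step is a one-line linear-algebra computation.
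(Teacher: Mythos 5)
Your proposal is correct, but it proves the lemma by a genuinely different route than the paper. The paper's proof stays entirely inside $M$: it first checks, using its listed defining properties of the pseudo-HNF (the ideal condition $U_{i,j}\in\mathfrak{a}_i^{-1}\mathfrak{h}_j^{-1}$, the determinant identity, and an adjugate computation giving $(U^{-1})_{i,j}\in\mathfrak{a}_j\mathfrak{h}_i$), that $((\mathfrak{h}_i^{-1})_{1\le i\le m},(\omega_i)_{1\le i\le m})$ is a pseudo-basis of all of $M$; it then observes that $S=\bigoplus_{i=m-n+1}^{m}\mathfrak{h}_i^{-1}\omega_i$ is saturated because $M/S$ is torsion-free, and finishes by an explicit ideal-membership computation (via $A_{j,i}\in\mathfrak{a}_j\mathfrak{b}_i^{-1}$ and the entries of $\tilde H$) showing $N\subseteq S$, equality with the saturation then following from the rank count. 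You instead invoke Lemma~\ref{lem:saturation} to reduce to computing $KN\cap M$, identify it in $\beta$-coordinates with the dual lattice of $\mathfrak{L}=\sum_i\mathfrak{a}_i^{-1}\rho_i$ (the module attached to the pseudo-matrix $((\mathfrak{a}_i^{-1}),A^t)$), apply the HNF (module preservation plus the shape of $H$) and the standard dual-pseudo-basis formula, and conclude with the linear-algebra identity $Ah_i^{*}=\omega_i$. Your argument is more conceptual: it explains why the inverse ideals $\mathfrak{h}_i^{-1}$ and the matrix $U^{-t}$ appear, and it sidesteps the verification that the full system $(\omega_i)_{1\le i\le m}$ is a pseudo-basis of $M$. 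The trade-off is that you rely on two external facts: that the pseudo-HNF represents the same module (which the paper does not take as given but rederives from its properties (a)--(b) via the adjugate bound on $U^{-1}$ — if your HNF convention only guarantees those properties, you would need that short argument too) and the duality formula $\bigl(\bigoplus_j\mathfrak{c}_jv_j\bigr)^{\vee}=\bigoplus_j\mathfrak{c}_j^{-1}v_j^{*}$; the paper's computation is more self-contained and, as a by-product, exhibits a pseudo-basis of $M$ refining the saturation, which your proof does not produce.
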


\begin{proof}
For a matrix $P$ we will denote the entry at position $(i,j)$ with $P_{i,j}$.
By definition of pseudo-Hermite normal form (see \cite[1.4.6]{cohen-advcomp}) the following hold:
\renewcommand{\labelenumi}{(\alph{enumi})}
\begin{enumerate}
  \item For all $i$ and $j$ we have $U_{i,j} \in \mathfrak{a}_{i}^{-1}\mathfrak{h}_{j}^{-1}$.
  \item We have $\prod_{i=1}^{m} \mathfrak{a}_{i}^{-1} = \det(U) \prod_{i=1}^{m} \mathfrak{h}_{i}$.
\item The matrix $H=A^t U$ is of the following form
\[
H=A^{t} U= 
\left( 
\begin{array}{cccccccc}
0 & 0 & \cdots & 0 & 1 & * & \cdots &  * \\
0 & 0 & \cdots & 0 & 0 & 1 & \cdots & * \\
\vdots & \vdots & \ddots & \vdots & \vdots & \ddots & \ddots & \vdots \\
0 & 0 & \cdots & 0 & 0 & \cdots & 0 & 1
\end{array}
\right),
\]
where the first $m-n$ columns are zero (we will write this in abbreviated
form as $H = A^{t} U=(0 \, | \, {\tilde H})$, where $\tilde H \in \Mat_{n \times n}(K)$).
\end{enumerate}

We first show that $((\mathfrak{h}_{i}^{-1})_{1 \leq i \leq m}, (\omega_{i})_{1 \leq i \leq m})$ is a pseudo-basis of $M$.
By \cite[1.4.2]{cohen-advcomp} it suffices to show that
\[
\textstyle{
\prod_{i=1}^m \mathfrak{a}_{i} = \det(U^{-t}) \prod_{i=1}^{m} \mathfrak{h}_{i}^{-1}
\quad \textrm{ and } \quad 
(U^{-t})_{i,j} \in \mathfrak{a}_{i} (\mathfrak{h}_{j}^{-1})^{-1} =  \mathfrak{a}_{i} \mathfrak{h}_{j} \textrm{ for } 1 \leq i,j \leq m.}
\]
The first claim follows directly from property (b).
The second claim is equivalent to $(U^{-1})_{i,j} \in \mathfrak{a}_{j} \mathfrak{h}_{i}$ for $1 \leq i,j \leq m$,
which can be shown by expressing $(U^{-1})_{i, j}$ in terms of the adjugate matrix and using properties (a) and (b).

Let $S = \bigoplus_{i=m-n+1}^{m} \mathfrak{h}_{i}^{-1} \omega_{i}$.
Then $M/S \cong \bigoplus_{i=1}^{m - n} \mathfrak{h}_{i}^{-1} \omega_{i}$ is torsion-free
and so $S$ is saturated in $M$.
Moreover, since $S$ and $N$ are both of rank $n$, to prove that $S$ is the saturation of $N$ in $M$ it suffices to show that $N \subseteq S$.

Observe that
\begin{align*}
  (\beta_{1}, \dotsc, \beta_{n})
  &= (\alpha_{1}, \dotsc, \alpha_{m})A = (\alpha_{1}, \dotsc, \alpha_{m}) U^{-t}H^{t}\\
  &= (\omega_{1}, \dotsc, \omega_{m})H^{t} = (\omega_{1}, \dotsc, \omega_{m}) \begin{pmatrix} 0 \\ \tilde{H}^{t} \end{pmatrix} \\
  &= (\omega_{m - n + 1}, \dotsc, \omega_{m}) \tilde{H}^{t}.
\end{align*}
By our hypotheses, we have
$
\mathfrak{b}_{i} \sum_{j=1}^{m} \alpha_{j} A_{j,i} = \mathfrak{b}_{i} \beta_{i} \subseteq N \subseteq M = \oplus_{j=1}^{m} \mathfrak{a}_{j}\alpha_{j}
$
for $1 \leq i \leq n$.
Hence for $1 \leq i \leq n$ and $1 \leq j \leq m$ we have $\mathfrak{b}_{i} A_{j,i} \subseteq \mathfrak{a}_{j}$,
that is, $A_{j,i} \in \mathfrak{a}_{j}\mathfrak{b}_{i}^{-1}$.
Together with property (a), this shows that for $1 \leq i, j \leq n$, we have
 \begin{align*}
  \tilde{H}_{i, j} = H_{i, m - n + j}
   &= \sum_{k=1}^{m} (A^{t})_{i, k} U_{k, m - n + j}
   = \sum_{k=1}^{m} A_{k, i} U_{k, m - n + j}\\
   &\in \sum_{k=1}^{m} \mathfrak{a}_{k} \mathfrak{b}_{i}^{-1} \mathfrak{a}_{k}^{-1} \mathfrak{h}_{m - n + j}^{-1}
   = \mathfrak{b}_{i}^{-1}\mathfrak{h}_{m - n + j}^{-1}. 
\end{align*}
In particular, for $1 \leq j \leq n$ we have
\[
\mathfrak{b}_{j} \beta_{j}
= \mathfrak{b}_{j} \left( \sum_{i=1}^{n} (\tilde{H}^{t})_{i, j} \cdot \omega_{m - n + i}\right)
\subseteq \mathfrak{b}_{j} \sum_{i=1}^{n} \mathfrak{b}_{j}^{-1} \mathfrak{h}_{m - n + i}^{-1} \omega_{m - n + i}
= \sum_{i=1}^{n} \mathfrak{h}_{m - n + i}^{-1} \omega_{m - n + i} = S.
\]
This shows that $N = \sum_{j=1}^{n} \mathfrak{b}_{j} \beta_{j} \subseteq S$, as required.
\end{proof}

\begin{remark}\label{rmk:algs-HNF-SNF}
An algorithm to compute pseudo-Hermite normal forms is given by \cite[Algorithm 1.4.7]{cohen-advcomp}
and has been improved upon in \cite{Fieker2014, Biasse2016}.
\end{remark}

\subsection{Computation of homomorphism groups}\label{subsec:comp-homgroups}
Let $K$ be a number field with ring of integers $\mathcal{O}=\mathcal{O}_{K}$ and let $A$ be a finite-dimensional semisimple $K$-algebra.
Let $\Lambda$ be an $\mathcal{O}$-order in $A$.
We give an algorithm that takes two $\Lambda$-lattices $X$ and $Y$ and returns the $\Lambda$-lattice $\Hom_{\Lambda}(X, Y)$.
Let $V=KX$ and $W=KY$, which may be regarded as $A$-modules. 
A key ingredient in computing the homomorphism group is the following characterization,
\begin{align}\label{eq:homring} 
\Hom_\Lambda(X, Y) = \{ f|_{X} \mid f \in \Hom_{A}(V, W) \text{ such that $f(X) \subseteq Y$} \}, 
\end{align}
where $f|_{X}$ denotes the restriction of $f$ to a map $f \colon X \to Y$.
Note that this follows from the fact that every element in $\Hom_{\Lambda}(X, Y)$ extends uniquely to an element in $\Hom_{A}(V, W)$ (see Lemma~\ref{lem:ext}).
Since there exists an algorithm for computing a $K$-basis of $\Hom_{A}(V, W)$ due to Steel \cite[1.9.3]{Steel2012},
it remains to single out the morphisms that map $X$ to $Y$. 
This will be done by employing the algorithm for computing saturations given in \S \ref{subsec:comp-sat}.
We assume that $X$ and $Y$ are given by pseudo-bases, that is,
\[
X = \mathfrak{a}_{1} \alpha_{1} \oplus \mathfrak{a}_{2} \alpha_{2} \oplus \dotsb \oplus \mathfrak{a}_{m} \alpha_{m} \quad \text{ and } \quad
Y = \mathfrak{b}_{1} \beta_{1} \oplus \mathfrak{b}_{2} \beta_{2} \oplus \dotsb \oplus \mathfrak{b}_{n} \beta_{n},
\]
with $\alpha_{i} \in V, \beta_{j} \in W$ and $\mathfrak{a}_{i}, \mathfrak{b}_{j}$ fractional ideals of $K$.
Since $(\alpha_{i})_{1 \leq i \leq m}$ and $(\beta_{j})_{1 \leq j \leq n}$ are
$K$-bases of $V$ and $W$ respectively, we use them to identify $\Hom_{K}(V, W)$ with $\Mat_{m \times n}(K)$
(we treat vectors as row vectors).
Under this identification we have
\begin{equation}\label{eq:Hom-O-containment}
\Hom_{\mathcal{O}}(X, Y)
= \bigoplus_{\substack{1 \leq i \leq n \\ 1 \leq j \leq m}} \mathfrak{a}_{i}^{-1}\mathfrak{b}_{j} e_{ij}
= \begin{pmatrix}
\mathfrak{a}_{1}^{-1}\mathfrak{b}_{1} & \dotsc & \mathfrak{a}_{1}^{-1}\mathfrak{b}_{n} \\
\vdots & \ddots & \vdots \\
\mathfrak{a}_{m}^{-1}\mathfrak{b}_{1} & \dotsc & \mathfrak{a}_{m}^{-1}\mathfrak{b}_{n} \end{pmatrix} \subseteq \Mat_{m \times n}(K), 
\end{equation}
where $e_{ij} \in \Mat_{m \times n}(K)$ is the matrix with $1$ in position $(i,j)$ and $0$ everywhere else.
The following result is a special case of \cite[Lemma 11.2]{Hofmann2016PhD}. 

\begin{lemma}\label{lem:homsat}
\renewcommand{\labelenumi}{(\alph{enumi})}
\begin{enumerate}
\item 
We have $\Hom_{\Lambda}(X, Y) = \Hom_{A}(V, W) \cap \Hom_{\mathcal{O}}(X, Y)$.
\item
Let $B_{1},\dotsc,B_{r}$ be a $K$-basis of $\Hom_{A}(V, W)$ with $B_{i} \in \Hom_{\mathcal{O}}(X, Y)$.
Then
\[
\Hom_{\Lambda}(X, Y) = K(\langle B_{1}, \dotsc, B_{r} \rangle_{\mathcal{O}}) \cap \Hom_{\mathcal{O}}(X, Y).
\]
\end{enumerate}
\end{lemma}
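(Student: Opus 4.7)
The plan is to prove (a) as the substantive content and deduce (b) as a direct corollary. Throughout, the identification $\Hom_{K}(V,W) \cong \Mat_{m \times n}(K)$ via the given pseudo-bases embeds all the relevant groups as $\mathcal{O}$-submodules of $\Mat_{m \times n}(K)$, so intersections make sense inside this common ambient space. Under this identification, $\Hom_{\mathcal{O}}(X, Y)$ is exactly the set of $K$-linear maps $f \colon V \to W$ satisfying $f(X) \subseteq Y$, which is the content of equation~\eqref{eq:Hom-O-containment}: a $K$-linear map with matrix $(c_{ij})$ sends $\mathfrak{a}_{i} \alpha_{i}$ into $Y = \oplus_{j} \mathfrak{b}_{j} \beta_{j}$ precisely when each $c_{ij} \in \mathfrak{a}_{i}^{-1} \mathfrak{b}_{j}$.

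For part (a), I would prove the two inclusions separately. For $\subseteq$, any $f \in \Hom_{\Lambda}(X, Y)$ extends uniquely to an $A$-module homomorphism $f^{A} \colon V \to W$ by Lemma~\ref{lem:ext}(b), so $f^{A} \in \Hom_{A}(V, W)$; its restriction to $X$ is $f$, and lies in $Y$, so $f^{A} \in \Hom_{\mathcal{O}}(X, Y)$ under the above identification. For $\supseteq$, given $g \in \Hom_{A}(V, W)$ with $g(X) \subseteq Y$, the restriction $g|_{X}$ is a $\Lambda$-linear map $X \to Y$ since $\Lambda \subseteq A$; hence $g|_{X} \in \Hom_{\Lambda}(X, Y)$, and this is precisely how we identified $g$ as an element of the intersection. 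This is essentially a reformulation of the already-stated equation~\eqref{eq:homring}, so the main content is just to check that the identifications on both sides are compatible.

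Part (b) is then immediate. Since $B_{1}, \dots, B_{r}$ form a $K$-basis of $\Hom_{A}(V, W)$, we have
\[
K\langle B_{1}, \dots, B_{r} \rangle_{\mathcal{O}} = \Hom_{A}(V, W),
\]
so intersecting with $\Hom_{\mathcal{O}}(X, Y)$ and applying (a) yields the claimed equality. No obstacle is expected; the only thing to be careful about is keeping track of the ambient space for the intersection and confirming that the matrix description in~\eqref{eq:Hom-O-containment} really does characterize the $K$-linear maps carrying $X$ into $Y$.
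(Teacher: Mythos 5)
Your proposal is correct and follows essentially the same route as the paper: part (a) amounts to the characterization in \eqref{eq:homring} (which you re-derive via Lemma~\ref{lem:ext}) combined with the observation that, under the matrix identification, $\Hom_{\mathcal{O}}(X,Y)$ consists exactly of the $K$-linear maps carrying $X$ into $Y$, and part (b) follows since $K\langle B_{1},\dotsc,B_{r}\rangle_{\mathcal{O}} = \Hom_{A}(V,W)$. The paper's proof is merely a terser version of the same argument.
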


\begin{proof}
Since a map $f \in \Hom_{A}(V, W)$ is also $\mathcal{O}$-linear we have $f(X) \subseteq Y$ if and only if $f \in \Hom_{\mathcal{O}}(X, Y)$.
Thus part (a) follows from \eqref{eq:homring}. Part (b) follows from (a) since 
$K(\langle B_{1}, \dotsc, B_{r} \rangle_{\mathcal{O}}) = \Hom_{A}(V, W)$.
\end{proof}

Using Lemma \ref{lem:saturation} we see that Lemma \ref{lem:homsat}(b) says that 
the $\mathcal{O}$-lattice $\Hom_{\Lambda}(X, Y)$ is the saturation of 
$\langle B_{1}, \dotsc, B_{r} \rangle_{\mathcal{O}}$ in $\Hom_{\mathcal{O}}(X, Y)$.
Hence the computation of $\Hom_{\Lambda}(X, Y)$ is reduced to the computation of a saturation of $\mathcal{O}$-lattices.
Therefore to compute $\Hom_{\Lambda}(X, Y)$, we proceed as follows:
\begin{enumerate}
\item Compute a $K$-basis $B_{1},\dotsc,B_{r}$ of $\Hom_{A}(V, W)$ using the algorithm of Steel \cite[1.9.3]{Steel2012}. 
\item
Scale the $B_{i}$'s such that $B_{i} \in \Hom_{\mathcal{O}}(X, Y)$ for all $i = 1,\dotsc,r$.
\item Compute the saturation $S$ of $\langle B_{1},\dotsc,B_{r} \rangle_\mathcal{O}$ in
$\Hom_{\mathcal{O}}(X,Y)$ using Lemma \ref{lem:satalg2} and one of the algorithms referenced in Remark \ref{rmk:algs-HNF-SNF}, 
and return $S$.
\end{enumerate}

\section{Isomorphism testing for localized lattices}\label{sec:lociso}

Let $K$ be a number field with ring of integers $\mathcal{O}=\mathcal{O}_{K}$ and let $A$ be a finite-dimensional semisimple $K$-algebra.
Let $\Lambda$ be an $\mathcal{O}$-order in $A$ and let $\mathcal{M}$ be a maximal $\mathcal{O}$-order such that
$\Lambda \subseteq \mathcal{M} \subseteq A$.
Let $X$ and $Y$ be $\Lambda$-lattices of equal $\mathcal{O}$-rank $n$
and let $\mathfrak{p}$ be a maximal ideal of $\mathcal{O}$.
We give four algorithms that determine whether or not the localizations
$X_{\mathfrak{p}}$ and $Y_{\mathfrak{p}}$ are isomorphic as $\Lambda_{\mathfrak{p}}$-lattices.
The first algorithm makes use of reduced lattices,
the second uses the results of \S \ref{sec:saturation-homgroups}, 
the third is a probabilistic algorithm of Monte Carlo type
and the fourth reduces to the case of testing whether a lattice localized at $\mathfrak{p}$ is free.
Of these, only the second and the fourth algorithms explicitly compute an isomorphism, if one exists. 
Variants of the first three algorithms are contained in the first named author's Ph.D.\ thesis \cite[\S 12]{Hofmann2016PhD},
but we caution that here the subscript $\mathfrak{p}$ denotes localization, whereas in loc.\ cit.\ it denotes completion.
Finally, we discuss implementations and running times of the last three algorithms in the case that $G$ is a finite group and
$\Lambda=\Z[G]$.

\subsection{Using reduced lattices}\label{subsec:redlat}

\begin{prop}\label{prop:higman}
Let $k_{0} := \min \{ k \in \Z_{\geq 0} \mid \mathfrak{p}^{k} \mathcal{M}_{\mathfrak{p}} \subseteq \Lambda_{\mathfrak{p}} \}$
and let $k \geq k_{0}+1$. Then the following are equivalent:
\renewcommand{\labelenumi}{(\alph{enumi})}
\begin{enumerate}
\item $X_{\mathfrak{p}}$ and $Y_{\mathfrak{p}}$ are isomorphic as $\Lambda_{\mathfrak{p}}$-lattices.
\item $X/\mathfrak{p}^{k} X$ and $Y/\mathfrak{p}^{k} Y$ are isomorphic as $\Lambda/\mathfrak{p}^{k} \Lambda$-modules.
\item The $\mathcal{O}/\mathfrak{p}^{k}$-module $\Hom_{\Lambda/\mathfrak{p}^{k} \Lambda}(X/\mathfrak{p}^{k} X, Y/\mathfrak{p}^{k} Y)$
contains an invertible element. 
\end{enumerate}
\end{prop}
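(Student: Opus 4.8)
The plan is to prove the chain of equivalences (a) $\Leftrightarrow$ (b) $\Leftrightarrow$ (c), using the classical ``reduction mod a high power of $\mathfrak{p}$'' technique. The key point enabling everything is the choice $k \geq k_{0}+1$: since $\mathfrak{p}^{k_{0}}\mathcal{M}_{\mathfrak{p}} \subseteq \Lambda_{\mathfrak{p}}$, both $X_{\mathfrak{p}}$ and $Y_{\mathfrak{p}}$ are sandwiched between their $\mathcal{M}_{\mathfrak{p}}$-spans and $\mathfrak{p}^{k_{0}}$ times those spans, which gives enough ``room'' to lift a mod-$\mathfrak{p}^{k}$ isomorphism back to an honest isomorphism. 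I would first replace $X, Y, \Lambda$ by their localizations at $\mathfrak{p}$ throughout (noting $X/\mathfrak{p}^{k}X \cong X_{\mathfrak{p}}/\mathfrak{p}^{k}X_{\mathfrak{p}}$ and similarly for $Y$ and $\Lambda$, and that $\Hom$ commutes with localization for finitely presented modules), so we may assume $\mathcal{O}$ is a DVR with maximal ideal $\mathfrak{p}$.

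For (a) $\Rightarrow$ (b): an isomorphism $X_{\mathfrak{p}} \to Y_{\mathfrak{p}}$ reduces modulo $\mathfrak{p}^{k}$ to an isomorphism of $\Lambda/\mathfrak{p}^{k}\Lambda$-modules. For (b) $\Rightarrow$ (c): an isomorphism $X/\mathfrak{p}^{k}X \to Y/\mathfrak{p}^{k}Y$ is by definition an invertible element of $\Hom_{\Lambda/\mathfrak{p}^{k}\Lambda}(X/\mathfrak{p}^{k}X, Y/\mathfrak{p}^{k}Y)$, where ``invertible'' means it has a two-sided inverse in the analogous $\Hom$-ring the other way, equivalently (after picking $\mathcal{O}/\mathfrak{p}^{k}$-bases) its determinant is a unit in $\mathcal{O}/\mathfrak{p}^{k}$; so one needs only to check that the two notions of invertibility agree, which is the statement that $\Hom(X/\mathfrak{p}^{k}X,-)$ applied to an isomorphism is bijective. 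The interesting direction is (c) $\Rightarrow$ (a). Given $\bar f \in \Hom_{\Lambda/\mathfrak{p}^{k}\Lambda}(X/\mathfrak{p}^{k}X, Y/\mathfrak{p}^{k}Y)$ invertible, lift it (using the surjection $\Hom_{\Lambda}(X,Y) \twoheadrightarrow \Hom_{\Lambda/\mathfrak{p}^{k}\Lambda}(X/\mathfrak{p}^{k}X, Y/\mathfrak{p}^{k}Y)$, valid since $X$ is a projective $\mathcal{O}$-lattice and reduction mod $\mathfrak{p}^{k}$ is right exact on $\Hom$ here) to some $f \in \Hom_{\Lambda}(X,Y)$. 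I would then argue that $f$ is itself an isomorphism. Pass to the $\mathcal{M}_{\mathfrak{p}}$-lattice extension $f^{\mathcal{M}_{\mathfrak{p}}} \colon \mathcal{M}_{\mathfrak{p}}X \to \mathcal{M}_{\mathfrak{p}}Y$ from Lemma~\ref{lem:ext}, and compare determinants (with respect to $\mathcal{O}_{\mathfrak{p}}$-bases, using the module index as in the proof of Theorem~\ref{thm:restrict-from-iso-over-max-order}). The containments $\mathfrak{p}^{k_{0}}\mathcal{M}_{\mathfrak{p}}X \subseteq \Lambda_{\mathfrak{p}}X \subseteq X$ and similarly for $Y$, together with the fact that $\bar f$ is invertible mod $\mathfrak{p}^{k}$ with $k \geq k_{0}+1$, force $\det_{\mathcal{O}_{\mathfrak{p}}}(f^{\mathcal{M}_{\mathfrak{p}}})$ to be a unit, so $f^{\mathcal{M}_{\mathfrak{p}}}$ is an isomorphism; then $f(X) \subseteq Y$ and a module-index count (exactly as in Theorem~\ref{thm:restrict-from-iso-over-max-order}, now localized at the single prime $\mathfrak{p}$) gives $f(X)=Y$, i.e. $f$ is an isomorphism of $\Lambda_{\mathfrak{p}}$-lattices.

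The main obstacle is making the determinant/module-index bookkeeping in (c) $\Rightarrow$ (a) precise: one has to track that a homomorphism $f\colon X \to Y$ whose reduction mod $\mathfrak{p}^{k}$ is invertible, with $k$ one more than the ``Higman-type'' exponent $k_{0}$, cannot have determinant in $\mathfrak{p}$ — the point being that any $\mathcal{O}_{\mathfrak{p}}$-linear endomorphism of $X_{\mathfrak{p}}$ that is zero mod $\mathfrak{p}^{k}$ lands inside $\mathfrak{p}^{k}X_{\mathfrak{p}} \subseteq \mathfrak{p}\cdot\mathfrak{p}^{k_{0}}\mathcal{M}_{\mathfrak{p}}X \subseteq \mathfrak{p}\Lambda_{\mathfrak{p}}X$, so perturbing $\bar f$'s lift by such a map does not change invertibility, and the comparison with $f^{\mathcal{M}_{\mathfrak{p}}}$ lets one conclude. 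All other implications are formal. I would also remark that the equivalence is the local analogue of Corollary~\ref{cor:nonalg}, and that (c) is the form suited to computation since $\Lambda/\mathfrak{p}^{k}\Lambda$ and the relevant $\Hom$-module are finite.
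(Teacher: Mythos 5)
Your reductions, your (a)$\Rightarrow$(b) and (b)$\Leftrightarrow$(c), and the final determinant/Nakayama bookkeeping are fine, but the heart of (c)$\Rightarrow$(a) has a genuine gap: the reduction map $\Hom_{\Lambda}(X,Y) \to \Hom_{\Lambda/\mathfrak{p}^{k}\Lambda}(X/\mathfrak{p}^{k}X, Y/\mathfrak{p}^{k}Y)$ is \emph{not} surjective in general, and $\mathcal{O}$-projectivity of $X$ does nothing to make it so (every lattice is $\mathcal{O}$-projective, so that hypothesis is empty here). Applying $\Hom_{\Lambda}(X,-)$ to $0 \to \mathfrak{p}^{k}Y \to Y \to Y/\mathfrak{p}^{k}Y \to 0$ and using $\Hom_{\Lambda}(X, Y/\mathfrak{p}^{k}Y) = \Hom_{\Lambda/\mathfrak{p}^{k}\Lambda}(X/\mathfrak{p}^{k}X, Y/\mathfrak{p}^{k}Y)$, the cokernel of your reduction map sits inside $\Ext^{1}_{\Lambda}(X, \mathfrak{p}^{k}Y)$, which vanishes when $X$ is projective over $\Lambda$, not over $\mathcal{O}$. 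Moreover, the one place where you claim to use $k \geq k_{0}+1$ (the perturbation estimate $\mathfrak{p}^{k}X_{\mathfrak{p}} \subseteq \mathfrak{p}\cdot\mathfrak{p}^{k_{0}}\mathcal{M}_{\mathfrak{p}}X \subseteq \mathfrak{p}X_{\mathfrak{p}}$) is trivially true and carries no information, so your argument as written never uses the hypothesis relating $k$ to $k_{0}$: it would ``prove'' that an invertible element of $\Hom_{\Lambda/\mathfrak{p}\Lambda}(X/\mathfrak{p}X, Y/\mathfrak{p}Y)$ already forces $X_{\mathfrak{p}} \cong Y_{\mathfrak{p}}$ for an arbitrary order, which is exactly the statement whose failure makes a bound of Higman type necessary in the first place.

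The missing ingredient is precisely what the paper imports: since $\mathfrak{p}^{k_{0}}\mathcal{M}_{\mathfrak{p}} \subseteq \Lambda_{\mathfrak{p}}$, the ideal $\mathfrak{p}^{k_{0}}\mathcal{O}_{\mathfrak{p}}$ lies in the central conductor of $\Lambda_{\mathfrak{p}}$ in $\mathcal{M}_{\mathfrak{p}}$ and hence annihilates $\Ext^{1}_{\Lambda_{\mathfrak{p}}}(M,N)$ for all $\Lambda_{\mathfrak{p}}$-lattices $M,N$ \cite[(29.4)]{curtisandreiner_vol1}; Higman's theorem \cite[Theorem 3]{MR0109175} (or \cite[(30.14)]{curtisandreiner_vol1}) then gives (b)$\Rightarrow$(a) for $k \geq k_{0}+1$, and the paper finishes with the canonical isomorphisms $X/\mathfrak{p}^{k}X \cong X_{\mathfrak{p}}/\mathfrak{p}^{k}X_{\mathfrak{p}}$, etc. If you want to keep your hands-on lifting strategy rather than cite Higman, the repair is to lift not $\bar{f}$ but $\pi^{k_{0}}\bar{f}$ (for $\pi$ a uniformizer at $\mathfrak{p}$): the $\Ext^{1}$-obstruction to lifting is killed by $\mathfrak{p}^{k_{0}}$, so there is $g \in \Hom_{\Lambda_{\mathfrak{p}}}(X_{\mathfrak{p}},Y_{\mathfrak{p}})$ with $g \equiv \pi^{k_{0}}\bar{f} \pmod{\mathfrak{p}^{k}}$; then $g(X_{\mathfrak{p}}) \subseteq \pi^{k_{0}}Y_{\mathfrak{p}}$, so $h := \pi^{-k_{0}}g$ is a genuine $\Lambda_{\mathfrak{p}}$-homomorphism $X_{\mathfrak{p}} \to Y_{\mathfrak{p}}$ congruent to $\bar{f}$ modulo $\mathfrak{p}^{k-k_{0}}$, and since $k-k_{0} \geq 1$ its reduction mod $\mathfrak{p}$ is invertible; only now does your determinant/Nakayama argument apply to $h$ and yield the isomorphism. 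This is where $k \geq k_{0}+1$ genuinely enters, and without this (or an equivalent appeal to Higman) the proposed proof does not go through.
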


\begin{proof}
The equivalence of (b) and (c) is clear.
Since $\mathfrak{p}^{k_{0}} \mathcal{M}_{\mathfrak{p}} \subseteq \Lambda_{\mathfrak{p}}$ we see that 
$\mathfrak{p}^{k_{0}} \mathcal{O}_{\mathfrak{p}}$ is contained in the central conductor of
$\Lambda_{\mathfrak{p}}$ in $\mathcal{M}_{\mathfrak{p}}$.
From \cite[(29.4)]{curtisandreiner_vol1} it follows that $\mathfrak{p}^{k_{0}} \mathcal
O_{\mathfrak{p}} \cdot \Ext^{1}_{\Lambda_{\mathfrak{p}}}(M, N) = 0$ for all
$\Lambda_{\mathfrak{p}}$-lattices $M$ and $N$.
Now a theorem of Higman \cite[Theorem 3]{MR0109175} (also see \cite[(30.14)]{curtisandreiner_vol1}) implies that
$X_{\mathfrak{p}}$ and $Y_{\mathfrak{p}}$ are isomorphic $\Lambda_{\mathfrak{p}}$-lattices if and only if 
$X_{\mathfrak{p}}/\mathfrak{p}^{k}X_{\mathfrak{p}}$ and $Y_{\mathfrak{p}}/\mathfrak{p}^{k} Y_{\mathfrak{p}}$ are isomorphic as
$\Lambda_{\mathfrak{p}}/\mathfrak{p}^{k} \Lambda_{\mathfrak{p}}$-modules.
The equivalence of (a) and (b) now follows from the canonical isomorphisms  
\[
\Lambda_{\mathfrak{p}}/\mathfrak{p}^{k} \Lambda_{\mathfrak{p}} \cong \Lambda/\mathfrak{p}^{k} \Lambda,
\quad
X/\mathfrak{p}^{k} X \cong X_{\mathfrak{p}}/\mathfrak{p}^{k} X_{\mathfrak{p}}
\quad 
\textrm{ and }
\quad
Y/\mathfrak{p}^{k} Y \cong Y_{\mathfrak{p}}/\mathfrak{p}^{k} Y_{\mathfrak{p}}.
\qedhere
\]
\end{proof}

To exploit this result algorithmically, we first have to explain how to determine the homomorphism group of reduced modules in part (c).
Note that $X/\mathfrak{p}^{k} X$ and $Y/\mathfrak{p}^{k} Y$ are both free of rank $n$ over $\mathcal{O}/\mathfrak{p}^k$.
Thus we may fix $\mathcal{O}/\mathfrak{p}^{k}$-bases of $X/\mathfrak{p}^{k} X$ and $Y/\mathfrak{p}^{k} Y$, which we use to
describe the action of $\Lambda$ on these modules via ring homomorphisms
\begin{align*}
& \rho_1 \colon \Lambda \longrightarrow \End_{\Lambda/\mathfrak{p}^k\Lambda}(X/\mathfrak{p}^k X) \longrightarrow \Mat_{n \times n}(\mathcal{O}/\mathfrak{p}^k) \\
& \rho_2 \colon \Lambda \longrightarrow \End_{\Lambda/\mathfrak{p}^k\Lambda}(Y/\mathfrak{p}^k Y) \longrightarrow \Mat_{n \times n}(\mathcal{O}/\mathfrak{p}^k).  
\end{align*}
Then for a set $\mathcal G \subseteq \Lambda$ generating $\Lambda$ as an $\mathcal{O}$-algebra we obtain
\[
\Hom_{\Lambda/\mathfrak{p}^{k} \Lambda}(X/\mathfrak{p}^{k} X, Y/\mathfrak{p}^{k} Y)
= \{ M \in \Mat_{n \times n}(\mathcal{O}/\mathfrak{p}^{k}) \mid \rho_1(g)M = M \rho_2(g) \, \forall g \in \mathcal{G} \}.
\]
Consider the $\mathcal{O}/\mathfrak{p}^{k}$-linear map
\begin{equation}\label{eq:h-map}
h \colon \Mat_{n \times n}(\mathcal{O}/\mathfrak{p}^{k}) \to \prod_{g \in \mathcal{G}} \Mat_{n \times n}(\mathcal{O}/\mathfrak{p}^{k}), 
\quad 
M \mapsto ((\rho_{1}(g) M - M \rho_{2}(g))_{g \in \mathcal{G}}), 
\end{equation}
and observe that $\ker(h) = \Hom_{\Lambda/\mathfrak{p}^{k} \Lambda}(X/\mathfrak{p}^{k} X, Y/\mathfrak{p}^{k} Y)$.
Since the quotient ring $\mathcal{O}/\mathfrak{p}^{k}$ is an Euclidean ring in the sense of~\cite{Fletcher1971},
an $\mathcal{O}/\mathfrak{p}^{k}$-spanning set of $\ker(h)$
can be computed using techniques related to the Howell normal form (see \cite{Storjohann1998, Fieker2014}).

Let $M \mapsto \overline{M}$ denote the canonical projection map $\Mat_{n \times n}(\mathcal{O}/\mathfrak{p}^{k}) \to \Mat_{n \times n}(\mathcal{O}/\mathfrak{p})$.

\begin{lemma}
Let $k$ be as in Proposition~\ref{prop:higman} and let $A_{1},\dotsc,A_{r} \in \Mat_{n \times n}(\mathcal{O}/\mathfrak{p}^{k})$ be an
$\mathcal{O}/\mathfrak{p}^{k}$-spanning set of $\Hom_{\Lambda/\mathfrak{p}^{k} \Lambda}(X/\mathfrak{p}^{k} X, Y/\mathfrak{p}^{k} Y)$.
Then $X_{\mathfrak{p}}$ and $Y_{\mathfrak{p}}$ are isomorphic as $\Lambda_{\mathfrak{p}}$-lattices
if and only if there exist $a_{1},\dotsc,a_{r} \in \mathcal{O}/\mathfrak{p}$ with
$\det_{\mathcal{O}/\mathfrak{p}}(a_{1} \overline{A}_{1} + \dotsb + a_{r} \overline{A}_{r}) \neq 0$.
\end{lemma}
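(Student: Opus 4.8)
The plan is to combine Proposition~\ref{prop:higman} with the elementary fact that a square matrix over the Artinian local ring $\mathcal{O}/\mathfrak{p}^{k}$ is invertible precisely when its reduction modulo $\mathfrak{p}$ has nonzero determinant. By the equivalence of (a) and (c) in Proposition~\ref{prop:higman}, the lattices $X_{\mathfrak{p}}$ and $Y_{\mathfrak{p}}$ are isomorphic as $\Lambda_{\mathfrak{p}}$-lattices if and only if the $\mathcal{O}/\mathfrak{p}^{k}$-module $H := \Hom_{\Lambda/\mathfrak{p}^{k}\Lambda}(X/\mathfrak{p}^{k}X, Y/\mathfrak{p}^{k}Y)$ contains an invertible element. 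After fixing the $\mathcal{O}/\mathfrak{p}^{k}$-bases of the free modules $X/\mathfrak{p}^{k}X$ and $Y/\mathfrak{p}^{k}Y$ (both of rank $n$) used above, we identify $H$ with a submodule of $\Mat_{n\times n}(\mathcal{O}/\mathfrak{p}^{k})$, and a homomorphism is invertible precisely when the corresponding matrix lies in $\GL_{n}(\mathcal{O}/\mathfrak{p}^{k})$. Since $A_{1},\dotsc,A_{r}$ span $H$ over $\mathcal{O}/\mathfrak{p}^{k}$, the task reduces to showing that some $\mathcal{O}/\mathfrak{p}^{k}$-linear combination $\sum_{i} a_{i}A_{i}$ lies in $\GL_{n}(\mathcal{O}/\mathfrak{p}^{k})$ if and only if $\det_{\mathcal{O}/\mathfrak{p}}(\sum_{i} b_{i}\overline{A}_{i}) \neq 0$ for some $b_{1},\dotsc,b_{r}\in\mathcal{O}/\mathfrak{p}$.

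Next I would record the ring-theoretic fact underlying the argument. The ring $R := \mathcal{O}/\mathfrak{p}^{k}$ is local with maximal ideal $\mathfrak{m} = \mathfrak{p}/\mathfrak{p}^{k}$ and residue field $R/\mathfrak{m} \cong \mathcal{O}/\mathfrak{p}$, and $\mathfrak{m}$ is nilpotent; in particular an element of $R$ is a unit if and only if its image in $R/\mathfrak{m}$ is nonzero. Hence for $M\in\Mat_{n\times n}(R)$ we have $M\in\GL_{n}(R)$ if and only if $\det M\in R^{\times}$, which holds if and only if $\det\overline{M} = \overline{\det M}\neq 0$ in $\mathcal{O}/\mathfrak{p}$, where $\overline{M}$ denotes the entrywise reduction of $M$ modulo $\mathfrak{p}$. (Alternatively one can invoke Nakayama's lemma: $M$ is surjective if and only if $\overline{M}$ is, and a surjective endomorphism of a finitely generated module over a commutative ring is bijective.)

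Finally I would assemble the two directions, which are now immediate. If $\sum_{i} a_{i}A_{i}\in\GL_{n}(R)$ for some $a_{i}\in R$, then setting $b_{i} := \overline{a_{i}}\in\mathcal{O}/\mathfrak{p}$ gives $\sum_{i} b_{i}\overline{A}_{i} = \overline{\sum_{i} a_{i}A_{i}}$, which has nonzero determinant by the fact above. Conversely, given $b_{i}\in\mathcal{O}/\mathfrak{p}$ with $\det_{\mathcal{O}/\mathfrak{p}}(\sum_{i} b_{i}\overline{A}_{i})\neq 0$, choose any lifts $a_{i}\in R$ of the $b_{i}$; then $\overline{\sum_{i} a_{i}A_{i}} = \sum_{i} b_{i}\overline{A}_{i}$ has nonzero determinant, so $\sum_{i} a_{i}A_{i}\in\GL_{n}(R)$, and this matrix lies in $H$ because $H$ is the $R$-span of $A_{1},\dotsc,A_{r}$; hence $H$ contains an invertible element and Proposition~\ref{prop:higman} yields $X_{\mathfrak{p}}\cong Y_{\mathfrak{p}}$. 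The only point that needs a word of care is that lifting the $b_{i}$ to arbitrary representatives keeps the combination inside $H$, which is clear since $H$ is an $R$-module containing each $A_{i}$; I do not expect any genuine obstacle, as the lemma is essentially a translation of Proposition~\ref{prop:higman}(c) into a determinant condition over the residue field, in a form suited to the Howell-normal-form computation described above.
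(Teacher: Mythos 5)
Your proposal is correct and follows essentially the same route as the paper: invoke Proposition \ref{prop:higman} (equivalence of (a) and (c)), express an invertible element of $\Hom_{\Lambda/\mathfrak{p}^{k}\Lambda}(X/\mathfrak{p}^{k}X,Y/\mathfrak{p}^{k}Y)$ as an $\mathcal{O}/\mathfrak{p}^{k}$-combination of the spanning matrices, and use that determinants commute with reduction modulo $\mathfrak{p}$ together with the fact that an element of $\mathcal{O}/\mathfrak{p}^{k}$ is a unit precisely when its image in $\mathcal{O}/\mathfrak{p}$ is nonzero. Your write-up simply spells out the lifting of coefficients and the local-ring/Nakayama justification that the paper leaves implicit.
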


\begin{proof}
By Proposition~\ref{prop:higman}, $X_{\mathfrak{p}}$ and $Y_{\mathfrak{p}}$ are isomorphic as $\Lambda_{\mathfrak{p}}$-lattices
if and only if there exist elements $a_{1},\dotsc,a_{r} \in \mathcal{O}/\mathfrak{p}^{k}$ such that $a_{1} A_{1} + \dotsb + a_{r} A_{r}$ is invertible,
or equivalently, $\det_{\mathcal{O}/\mathfrak{p}^{k}}(a_{1} A_{1} + \dotsb + a_{r} A_{r}) \neq 0$.
The claim now follows since reduction mod $\mathfrak{p}$ commutes with taking determinants and an element $a \in \mathcal{O}/\mathfrak{p}^{k}$ is a unit if and only if
$(a \bmod{\mathfrak{p}})$ is a unit in $\mathcal{O}/\mathfrak{p}$.
\end{proof}

To test whether $X_{\mathfrak{p}}$ and $Y_{\mathfrak{p}}$ are isomorphic as $\Lambda_{\mathfrak{p}}$-lattices,
we can thus proceed as follows:
\begin{enumerate}
\item Use an algorithm of Friedrichs \cite[(2.16)]{friedrichs} to compute $k_{0}$.
Set $k:=k_{0}+1$.
\item
Construct the $\mathcal{O}/\mathfrak{p}^{k}$-linear map $h$ of \eqref{eq:h-map} as a matrix.
\item
Compute an $\mathcal{O}/\mathfrak{p}^{k}$-spanning set $A_{1},\dotsc,A_{r}$ of $\ker(h)$.
\item
For every tuple $(a_{1},\dotsc,a_{r}) \in (\mathcal{O}/\mathfrak{p})^{r}$ test whether
$\det_{\mathcal{O}/\mathfrak{p}}(a_{1} \overline{A}_{1} + \dotsb + a_{r} \overline{A}_{r}) \neq 0$. 
The $\Lambda_{\mathfrak{p}}$-lattices $X_{\mathfrak{p}}$ and $Y_{\mathfrak{p}}$ are isomorphic if and only if such a tuple exists.
\end{enumerate}

\subsection{Using global homomorphism groups}\label{subsec:globhom}

The second approach is based on the ability to compute the global homomorphism group $\Hom_{\Lambda}(X, Y)$.
We follow the notation and setup of \S \ref{subsec:comp-homgroups}, but specialize to the case $m=n$.
Hence we let $V=KX$ and $W=KY$ and assume that $X$ and $Y$ are given by pseudo-bases, that is,
\[
X = \mathfrak{a}_{1} \alpha_{1} \oplus \mathfrak{a}_{2} \alpha_{2} \oplus \dotsb \oplus \mathfrak{a}_{n} \alpha_{n} \quad \text{ and } \quad
Y = \mathfrak{b}_{1} \beta_{1} \oplus \mathfrak{b}_{2} \beta_{2} \oplus \dotsb \oplus \mathfrak{b}_{n} \beta_{n}, 
\]
with $\alpha_{i} \in V, \beta_{j} \in W$ and $\mathfrak{a}_{i}, \mathfrak{b}_{j}$ fractional ideals of $K$.
As in \S \ref{subsec:comp-homgroups}, we use the $K$-bases $(\alpha_{i})_{i}$, $(\beta_{i})_{i}$ to identify homomorphism spaces as subsets of
$\Mat_{n \times n}(K)$. Thus we have
\begin{equation}\label{eq:identify-homgroups}
\Hom_{\Lambda}(X, Y) \subseteq \Hom_{A}(V, W) \subseteq \Hom_{K}(V, W) = \Mat_{n \times n}(K). 
\end{equation}
Let $M \mapsto \overline{M}$ denote the canonical projection map $\Mat_{n \times n}(\mathcal{O}_{\mathfrak{p}}) \to \Mat_{n \times n}(\mathcal{O}/\mathfrak{p})$.

\begin{lemma}\label{lem:padichom}
Suppose $v_{\mathfrak{p}}(\mathfrak{a}_{i}) = v_{\mathfrak{p}}(\mathfrak{b}_{i}) = 0$ for $1 \leq i \leq n$. 
Then the following hold:
\renewcommand{\labelenumi}{(\alph{enumi})}
\begin{enumerate}
\item
The $\Lambda_{\mathfrak{p}}$-lattices $X_{\mathfrak{p}}$ and $Y_{\mathfrak{p}}$ are $\mathcal{O}_{\mathfrak{p}}$-free with
$\mathcal{O}_{\mathfrak{p}}$-bases $(\alpha_{i})_{i}$ and $(\beta_{j})_{j}$, respectively.
\item We have $\Hom_{\Lambda_{\mathfrak{p}}}(X_{\mathfrak{p}}, Y_{\mathfrak{p}}) = \Mat_{n \times n}(\mathcal{O}_{\mathfrak{p}}) \cap \Hom_{A}(V, W)$.
\item
There exists a pseudo-basis $((\mathfrak{c}_{i})_{i}, (A_{i})_{i})_{1 \leq i \leq r}$ of
$\Hom_{\Lambda}(X,Y)$ with $v_{\mathfrak{p}}(\mathfrak{c}_{i}) = 0$ and $A_{i} \in \Mat_{n \times n}(\mathcal{O}_{\mathfrak{p}})$
for $1 \leq i \leq r$.
\item The matrices $A_{1},\dotsc,A_{r}$ form an $\mathcal{O}_{\mathfrak{p}}$-basis of $\Hom_{\Lambda_{\mathfrak{p}}}(X_{\mathfrak{p}}, Y_{\mathfrak{p}})$.
\item Let $B_{1},\ldots,B_{r} \in  \Mat_{n \times n}(\mathcal{O}_{\mathfrak{p}})$ be any $\mathcal{O}_{\mathfrak{p}}$-basis of $\Hom_{\Lambda_{\mathfrak{p}}}(X_{\mathfrak{p}}, Y_{\mathfrak{p}})$.
The $\Lambda_{\mathfrak{p}}$-lattices $X_{\mathfrak{p}}$ and $Y_{\mathfrak{p}}$ are isomorphic
if and only if there exists a tuple $(b_{1},\dotsc,b_{r}) \in (\mathcal{O}/\mathfrak{p})^{r}$ such that
$\det_{\mathcal{O}/\mathfrak{p}}(b_{1}\overline{B}_{1} + \dotsb + b_{r} \overline{B}_{r}) \neq 0$.
Moreover, if such a tuple exists then an isomorphism is given by $b_{1} B_{1} + \dotsb + b_{r} B_{r}$.
\end{enumerate}
\end{lemma}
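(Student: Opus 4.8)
The plan is to establish the five assertions in turn. Parts (a)--(d) are localization bookkeeping, while the content of the lemma lies in (e), which reduces the isomorphism question for $X_{\mathfrak{p}}$ and $Y_{\mathfrak{p}}$ to a determinant condition over the residue field $\mathcal{O}/\mathfrak{p}$.

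For (a), localization commutes with finite direct sums, so $X_{\mathfrak{p}} = \bigoplus_{i=1}^{n} (\mathfrak{a}_{i})_{\mathfrak{p}} \alpha_{i}$; since $\mathcal{O}_{\mathfrak{p}}$ is a discrete valuation ring and $v_{\mathfrak{p}}(\mathfrak{a}_{i}) = 0$, each $(\mathfrak{a}_{i})_{\mathfrak{p}} = \mathcal{O}_{\mathfrak{p}}$, so $X_{\mathfrak{p}} = \bigoplus_{i} \mathcal{O}_{\mathfrak{p}} \alpha_{i}$ is $\mathcal{O}_{\mathfrak{p}}$-free on $(\alpha_{i})_{i}$, and the same argument handles $Y_{\mathfrak{p}}$. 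For (b), I would localize the identity $\Hom_{\Lambda}(X,Y) = \Hom_{A}(V,W) \cap \Hom_{\mathcal{O}}(X,Y)$ of Lemma~\ref{lem:homsat}(a) at $\mathfrak{p}$, using four standard facts: $\Hom$ commutes with the flat base change $\mathcal{O} \to \mathcal{O}_{\mathfrak{p}}$ for the finitely presented $\Lambda$-module $X$, so the left-hand side becomes $\Hom_{\Lambda_{\mathfrak{p}}}(X_{\mathfrak{p}}, Y_{\mathfrak{p}})$; localization is exact, hence commutes with intersections of submodules of $\Mat_{n \times n}(K)$; $\Hom_{A}(V,W)$ is already a $K$-vector space and so is unchanged; and by \eqref{eq:Hom-O-containment} (with $m = n$), together with $v_{\mathfrak{p}}(\mathfrak{a}_{i}) = v_{\mathfrak{p}}(\mathfrak{b}_{j}) = 0$, one has $\Hom_{\mathcal{O}}(X,Y)_{\mathfrak{p}} = \Mat_{n \times n}(\mathcal{O}_{\mathfrak{p}})$. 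This yields $\Hom_{\Lambda_{\mathfrak{p}}}(X_{\mathfrak{p}}, Y_{\mathfrak{p}}) = \Hom_{A}(V,W) \cap \Mat_{n \times n}(\mathcal{O}_{\mathfrak{p}})$, i.e.\ (b).

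For (c) and (d): the $\mathcal{O}$-lattice $\Hom_{\Lambda}(X,Y)$ admits a pseudo-basis $((\mathfrak{d}_{i})_{i}, (A_{i})_{i})_{1 \le i \le r}$ (it is full in $\Hom_{A}(V,W)$ by \eqref{eq:homring}, hence of $\mathcal{O}$-rank $r$). I would pick, for each $i$, some $c_{i} \in K^{\times}$ with $v_{\mathfrak{p}}(c_{i}) = v_{\mathfrak{p}}(\mathfrak{d}_{i})$ and replace the pair $(\mathfrak{d}_{i}, A_{i})$ by $(c_{i}^{-1}\mathfrak{d}_{i}, c_{i}A_{i})$. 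This leaves the lattice $\bigoplus_{i} \mathfrak{d}_{i}A_{i}$ unchanged, makes $v_{\mathfrak{p}}(c_{i}^{-1}\mathfrak{d}_{i}) = 0$, and forces the new matrices into $\Mat_{n \times n}(\mathcal{O}_{\mathfrak{p}})$: since $\mathfrak{d}_{i}A_{i} \subseteq \Hom_{\Lambda}(X,Y)$, localizing and applying (b) gives $(\mathfrak{d}_{i})_{\mathfrak{p}}A_{i} \subseteq \Mat_{n \times n}(\mathcal{O}_{\mathfrak{p}})$, while $(\mathfrak{d}_{i})_{\mathfrak{p}} = c_{i}\mathcal{O}_{\mathfrak{p}}$ as fractional ideals of the discrete valuation ring $\mathcal{O}_{\mathfrak{p}}$. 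Renaming the rescaled data gives the pseudo-basis $((\mathfrak{c}_{i})_{i}, (A_{i})_{i})$ of (c); then localizing it and using $v_{\mathfrak{p}}(\mathfrak{c}_{i}) = 0$ yields $\Hom_{\Lambda_{\mathfrak{p}}}(X_{\mathfrak{p}}, Y_{\mathfrak{p}}) = \Hom_{\Lambda}(X,Y)_{\mathfrak{p}} = \bigoplus_{i} \mathcal{O}_{\mathfrak{p}}A_{i}$, which is (d).

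Finally, for (e): by (a) every $\Lambda_{\mathfrak{p}}$-homomorphism $X_{\mathfrak{p}} \to Y_{\mathfrak{p}}$ is described by a matrix, which by (b) lies in $\Mat_{n \times n}(\mathcal{O}_{\mathfrak{p}})$; such a homomorphism is an isomorphism of $\Lambda_{\mathfrak{p}}$-lattices if and only if its matrix lies in $\GL_{n}(\mathcal{O}_{\mathfrak{p}})$, one direction being immediate and the other following since an $\mathcal{O}_{\mathfrak{p}}$-linear bijection of free $\mathcal{O}_{\mathfrak{p}}$-modules has an $\mathcal{O}_{\mathfrak{p}}$-linear inverse, which is automatically $\Lambda_{\mathfrak{p}}$-linear. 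As $\mathcal{O}_{\mathfrak{p}}$ is local with residue field $\mathcal{O}/\mathfrak{p}$, a matrix in $\Mat_{n \times n}(\mathcal{O}_{\mathfrak{p}})$ is invertible if and only if its reduction modulo $\mathfrak{p}$ has nonzero determinant. Writing a general element of $\Hom_{\Lambda_{\mathfrak{p}}}(X_{\mathfrak{p}}, Y_{\mathfrak{p}}) = \bigoplus_{i} \mathcal{O}_{\mathfrak{p}}B_{i}$ as $\sum_{i} c_{i}B_{i}$ with $c_{i} \in \mathcal{O}_{\mathfrak{p}}$, and using that $\mathcal{O}_{\mathfrak{p}} \to \mathcal{O}/\mathfrak{p}$ is surjective, one concludes that $X_{\mathfrak{p}} \cong Y_{\mathfrak{p}}$ if and only if $\det_{\mathcal{O}/\mathfrak{p}}(b_{1}\overline{B}_{1} + \dots + b_{r}\overline{B}_{r}) \neq 0$ for some $(b_{1},\dots,b_{r}) \in (\mathcal{O}/\mathfrak{p})^{r}$; for such a tuple, lifting the $b_{i}$ to $\mathcal{O}_{\mathfrak{p}}$ and forming $b_{1}B_{1} + \dots + b_{r}B_{r}$ gives a matrix in $\GL_{n}(\mathcal{O}_{\mathfrak{p}})$, hence an isomorphism. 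Everything here is routine; the one point requiring a little care is the simultaneous rescaling of coefficient ideals and matrices in (c), which I regard as the (minor) main obstacle.
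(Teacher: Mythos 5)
Your proposal is correct and follows essentially the same route as the paper: the same localization of coefficient ideals for (a), the same rescaling of a pseudo-basis (your $c_{i}$ playing the role of the paper's $\pi^{v_{\mathfrak{p}}(\mathfrak{c}_{i})}$) for (c), the same flat base change identification $\Hom_{\Lambda}(X,Y)_{\mathfrak{p}} = \Hom_{\Lambda_{\mathfrak{p}}}(X_{\mathfrak{p}},Y_{\mathfrak{p}})$ for (d), and the same unit-determinant-modulo-$\mathfrak{p}$ argument for (e). The only (cosmetic) difference is in (b), where you localize the global identity of Lemma~\ref{lem:homsat}(a) while the paper argues directly that an element of $\Hom_{A}(V,W)$ maps $X_{\mathfrak{p}}$ into $Y_{\mathfrak{p}}$ exactly when its matrix has entries in $\mathcal{O}_{\mathfrak{p}}$; both are fine.
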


\begin{proof}
(a)
By assumption we have $\mathcal{O}_{\mathfrak{p}} \mathfrak{a}_{i} = \mathcal{O}_{\mathfrak{p}} \mathfrak{b}_{i} = \mathcal{O}_\mathfrak{p}$.
Thus
\[
X_{\mathfrak{p}} = \mathcal{O}_\mathfrak{p} X = \bigoplus_{i} \mathcal{O}_{\mathfrak{p}} \alpha_{i}
\quad \textrm{ and } \quad
Y_{\mathfrak{p}} = \mathcal{O}_\mathfrak{p} Y = \bigoplus_{i} \mathcal{O}_{\mathfrak{p}} \beta_{i}.
\]  

(b) Note that $f \in \Hom_{A}(V, W)$ satisfies
$f \in \Hom_{\Lambda_{\mathfrak{p}}}(X_{\mathfrak{p}}, Y_{\mathfrak{p}})$ if and only if
$f(X_{\mathfrak{p}}) \subseteq Y_{\mathfrak{p}}$. By part (a) and the identification \eqref{eq:identify-homgroups},
this is in turn equivalent to $f \in \Mat_{n \times n}(\mathcal{O}_{\mathfrak{p}})$.

(c) From Lemma \ref{lem:homsat}(a) and \eqref{eq:Hom-O-containment} (with $m=n$) we have
\[
\Hom_{\Lambda}(X, Y) = \Hom_{A}(V, W) \cap \Hom_{\mathcal{O}}(X, Y)
\subseteq \Hom_{\mathcal{O}}(X, Y) = \bigoplus_{1 \leq i , j \leq n} \mathfrak{a}_{i}^{-1}\mathfrak{b}_{j} e_{ij}.
\]
Hence by the assumptions on the coefficient ideals and the identification \eqref{eq:identify-homgroups} we have that 
$\Hom_{\Lambda}(X, Y)$ is a subset of $\Mat_{n \times n}(\mathcal{O}_{\mathfrak{p}})$.
Let $((\mathfrak{c}_{i}), (A_{i}))_{1 \leq i \leq r}$ be any pseudo-basis of $\Hom_{\Lambda}(X, Y)$ and
let $\pi \in \mathfrak{p}\setminus \mathfrak{p}^{2}$ be any uniformizer.
Then
\[
\mathfrak{c}_{i} A_{i} = (\mathfrak{c}_{i} / \pi^{v_{\mathfrak{p}}(\mathfrak{c}_{i})})(\pi^{v_{\mathfrak{p}}(\mathfrak{c}_{i})} A_{i}) 
\subseteq \Hom_{\Lambda}(X, Y) \subseteq \Mat_{n\times n}(\mathcal{O}_{\mathfrak{p}}).
\]
Since $\mathfrak{c}_{i} / \pi^{v_\mathfrak{p}(\mathfrak{c}_{i})}$ has $\mathfrak{p}$-adic valuation $0$, we thus have that
every entry of $ \pi^{v_{\mathfrak{p}}(\mathfrak{c}_{i})} A_{i}$ has $\mathfrak{p}$-adic valuation $\geq 0$, that is,
$ \pi^{v_\mathfrak{p}(\mathfrak{c}_{i})}A_{i} \in \Mat_{n \times n}(\mathcal{O}_{\mathfrak{p}})$.
Moreover, $((\mathfrak c_i/\pi^{v_{\mathfrak{p}}(\mathfrak{c}_{i})}), (\pi^{v_{\mathfrak{p}}(\mathfrak{c}_{i})} A_i))_{1 \leq i \leq r}$
is also a pseudo-basis of $\Hom_\Lambda(X, Y)$.
Hence by making the appropriate substitution, we can and do assume without loss of generality that the 
pseudo-basis $((\mathfrak{c}_{i})_{i}, (A_{i})_{i})_{1 \leq i \leq r}$ has the desired properties.

(d) By \cite[(3.18)]{Reiner2003} and the identification of
$\mathcal{O}_{\mathfrak{p}} \Hom_{\Lambda}(X, Y)$ with $\mathcal{O}_{\mathfrak{p}} \otimes_{\mathcal{O}} \Hom_{\Lambda}(X, Y)$,
we have
\[
\Hom_{\Lambda_{\mathfrak{p}}}(X_{\mathfrak{p}}, Y_{\mathfrak{p}})
= \mathcal{O}_{\mathfrak{p}} \Hom_{\Lambda}(X, Y)
= \bigoplus_{i=1}^{r} \mathcal{O}_{\mathfrak{p}} \mathfrak{c}_{i} A_{i}
= \bigoplus_{i=1}^{r} \mathcal{O}_{\mathfrak{p}} A_{i}.
\]

(e) The two $\Lambda_{\mathfrak{p}}$-lattices $X_{\mathfrak{p}}$ and $Y_{\mathfrak{p}}$ are isomorphic if and only if
$\Hom_{\Lambda_{\mathfrak{p}}}(X_{\mathfrak{p}}, Y_{\mathfrak{p}})$ contains an invertible element. 
By part (b), $M \in \Hom_{\Lambda_{\mathfrak{p}}}(X_{\mathfrak{p}}, Y_{\mathfrak{p}})$ is invertible if and only if
$\det_{\mathcal{O}_{\mathfrak{p}}}(M) \in \mathcal{O}_{\mathfrak{p}}^{\times}$.
This is in turn equivalent to $\det_{\mathcal{O}_{\mathfrak{p}}}(M) \bmod{\mathfrak{p}} \neq 0$ in
$\mathcal{O}_{\mathfrak{p}}/\mathfrak{p}\mathcal{O}_{\mathfrak{p}} \cong \mathcal{O}/\mathfrak{p}$.
The claim now follows from the observation that  $\det_{\mathcal{O}_{\mathfrak{p}}}(M) \bmod{\mathfrak{p}} = \det_{\mathcal{O}/\mathfrak{p}}(\overline{M})$
together with the hypothesis on $B_{1}, \ldots, B_{r}$.
\end{proof}

To test whether $X_{\mathfrak{p}}$ and $Y_{\mathfrak{p}}$ are isomorphic as $\Lambda_{\mathfrak{p}}$-lattices,
and to compute an isomorphism if it exists, we can thus proceed as follows:

\begin{enumerate}
\item
Adjust the pseudo-bases of $X$ and $Y$ such that the coefficient ideals have zero $\mathfrak{p}$-adic valuation.
For example, if $\pi \in \mathfrak{p} \setminus \mathfrak{p}^{2}$ is any uniformizer, then
\[
((\pi^{v_{\mathfrak{p}}(\mathfrak a_{i})}\alpha_{i})_{1 \leq i \leq n}, (\mathfrak{a}_{i}/\pi^{v_{\mathfrak{p}}(\mathfrak{a}_{i})})_{1 \leq i \leq n})
\]
is a pseudo-basis of $X$ of the required form (similarly for $Y$).
\item
Compute a pseudo-basis $((\mathfrak{c}_{i}), (A_{i}))_{1 \leq i \leq r}$ of $\Hom_{\Lambda}(X, Y)$
using the algorithm of \S \ref{subsec:comp-homgroups} and adjust it such that the
coefficient ideals have zero $\mathfrak{p}$-adic valuation (as in the proof of Lemma~\ref{lem:padichom}(c)).
\item
Reduce the matrices $A_{1},\dotsc,A_{r}$ modulo $\mathfrak{p}$ to obtain
    $\overline{A}_{1},\dotsc,\overline{A}_{r} \in \Mat_{n \times n}(\mathcal{O}/\mathfrak{p})$.
For every tuple $(a_{1},\dotsc,a_{r}) \in (\mathcal{O}/\mathfrak{p})^{r}$ test whether
$\det_{\mathcal{O}/\mathfrak{p}}(a_{1} \overline{A}_{1} + \dotsb + a_{r} \overline{A}_{r}) \neq 0$.
If such a tuple exists then $X_{\mathfrak{p}}$ and $Y_{\mathfrak{p}}$ are isomorphic as $\Lambda_{\mathfrak{p}}$-lattices
and an isomorphism is given by $a_{1} A_{1} + \dotsb + a_{r} A_{r}$.
Otherwise $X_{\mathfrak{p}}$ and $Y_{\mathfrak{p}}$ are not isomorphic as $\Lambda_{\mathfrak{p}}$-lattices.
\end{enumerate}


\subsection{Probabilistic isomorphism testing}\label{subsec:prob}

The previous two approaches to isomorphism testing reduced the problem to testing the vanishing of potentially
$\#(\mathcal{O}/\mathfrak{p})^{r}$ determinants of elements in $\Mat_{n \times n}(\mathcal{O}/\mathfrak{p})$ (if $X_{\mathfrak{p}}$ and $Y_{\mathfrak{p}}$ are not isomorphic as
$\Lambda_{\mathfrak{p}}$-lattices, then one really needs $\#(\mathcal{O}/\mathfrak{p})^{r}$ determinant computations).
In particular, if either $r$ or $\#(\mathcal{O}/\mathfrak{p})$ is large, this is a rather time-consuming step.
We now connect the problem to polynomial identity testing, which allows us to lower the number of determinant computations in certain situations 
(also see Remark \ref{rmk:when-prob-method-is-useful}).

Let $A_{1},\dotsc,A_{r} \in \Mat_{n \times n}(\mathcal{O}_{\mathfrak{p}})$ be an $\mathcal{O}_{\mathfrak{p}}$-basis of
$\Hom_{\Lambda_{\mathfrak{p}}}(X_{\mathfrak{p}}, Y_{\mathfrak{p}})$ as given by Lemma~\ref{lem:padichom}.
Let $M \mapsto \overline{M}$ denote the canonical projection map $\Mat_{n \times n}(\mathcal{O}_{\mathfrak{p}}) \to \Mat_{n \times n}(\mathcal{O}/\mathfrak{p})$.
Let $T_{1},\dotsc,T_{r}$ be indeterminates and consider the polynomial
\[
f := \det(T_{1} \overline{A}_{1} + \dotsb + T_{r} \overline{A}_{r} ) \in (\mathcal{O}/\mathfrak{p})[T_{1},\dotsc,T_{r}],
\]
which is of total degree $\leq n$.

\begin{lemma}
The two $\Lambda_{\mathfrak{p}}$-lattices $X_{\mathfrak{p}}$ and $Y_{\mathfrak{p}}$ are isomorphic if and only if $f$ as defined above is not the zero-polynomial.
\end{lemma}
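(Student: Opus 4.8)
The proof will combine Lemma~\ref{lem:padichom}(e) with a routine passage to a larger residue field. The easy implication is immediate: if $X_{\mathfrak p}\cong Y_{\mathfrak p}$, then by Lemma~\ref{lem:padichom}(e) there is a tuple $(b_1,\dotsc,b_r)\in(\mathcal O/\mathfrak p)^r$ with $f(b_1,\dotsc,b_r)=\det_{\mathcal O/\mathfrak p}(b_1\overline A_1+\dotsb+b_r\overline A_r)\neq 0$, so $f$ is certainly not the zero polynomial.

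For the converse, suppose $f\neq 0$. If $\#(\mathcal O/\mathfrak p)>n$ then, since $f$ has total degree at most $n$, it cannot vanish identically on $(\mathcal O/\mathfrak p)^r$ (restrict $f$ to a suitable line and use that a nonzero one-variable polynomial of degree $<\#(\mathcal O/\mathfrak p)$ has a non-root, or invoke the Combinatorial Nullstellensatz), so a tuple as in Lemma~\ref{lem:padichom}(e) exists and we are done. In general I would pick a finite field extension $\ell$ of $\mathcal O/\mathfrak p$ with $\#\ell>n$ together with the corresponding unramified extension $\mathcal O'$ of the discrete valuation ring $\mathcal O_{\mathfrak p}$ (so $\mathcal O'$ is free of finite rank over $\mathcal O_{\mathfrak p}$ with residue field $\ell$). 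As above there is $\overline b\in\ell^r$ with $f(\overline b)\neq 0$; lift it to some $b'\in(\mathcal O')^r$. Since $\mathcal O_{\mathfrak p}\to\mathcal O'$ is flat and the lattices are finitely presented, $A_1,\dotsc,A_r$ is also an $\mathcal O'$-basis of $\Hom_{\mathcal O'\otimes\Lambda_{\mathfrak p}}(\mathcal O'\otimes X_{\mathfrak p},\mathcal O'\otimes Y_{\mathfrak p})$, so $M':=\sum_i b'_i A_i$ lies in this group, and reducing modulo the maximal ideal of $\mathcal O'$ gives $\det_{\mathcal O'}(M')\equiv f(\overline b)\neq 0$; hence $\det_{\mathcal O'}(M')\in(\mathcal O')^{\times}$ and $M'$ is an isomorphism of $\mathcal O'\otimes\Lambda_{\mathfrak p}$-lattices. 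The Noether--Deuring theorem (faithfully flat descent of module isomorphisms) then yields $X_{\mathfrak p}\cong Y_{\mathfrak p}$ as $\Lambda_{\mathfrak p}$-lattices.

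The main obstacle is precisely this converse when $\#(\mathcal O/\mathfrak p)\leq n$: over such a small field a nonzero polynomial can vanish at every rational point, so one genuinely has to enlarge the residue field, and the real content is then the descent step. (Alternatively, one may phrase the statement over a fixed algebraic closure of $\mathcal O/\mathfrak p$ from the outset, in which case it reduces to the Schwartz--Zippel fact that a nonzero polynomial cannot vanish on all of $\overline{\mathcal O/\mathfrak p}^{\,r}$.) The remaining points — the existence of $\mathcal O'$ with the stated properties, that base change commutes with the relevant $\Hom$, and that reduction commutes with the determinant — are routine.
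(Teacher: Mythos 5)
Your argument is correct and follows essentially the same route as the paper: the forward direction is immediate from Lemma~\ref{lem:padichom}(e), and for the converse the paper likewise picks a finite residue extension $\mathbb{F}$ where $f$ has a non-root, passes to a discrete valuation ring $S \supseteq \mathcal{O}_{\mathfrak{p}}$ with residue field $\mathbb{F}$ over which $A_{1},\dotsc,A_{r}$ remain a basis of the homomorphism group, obtains an isomorphism $SX_{\mathfrak{p}} \to SY_{\mathfrak{p}}$, and descends. The only cosmetic difference is that your final step invokes the Noether--Deuring theorem by name, whereas the paper cites \cite[(30.25)]{curtisandreiner_vol1} for the same descent.
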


\begin{proof}
Suppose that $X_{\mathfrak{p}}$ and $Y_{\mathfrak{p}}$ are isomorphic as $\Lambda_{\mathfrak{p}}$-lattices.
Then by Lemma \ref{lem:padichom}(e) there exist  $a_{1},\dotsc,a_{r} \in \mathcal{O}/\mathfrak{p}$ such that $f(a_{1},\dotsc,a_{r}) \neq 0$ and so 
in particular $f$ is not the zero-polynomial.
Suppose conversely that $f$ is not the zero-polynomial.
Then there exists a finite extension $\mathbb{F}$ of $\mathcal{O}/\mathfrak{p}$ and $b_{1},\ldots,b_{r} \in \mathbb{F}$ such that
$f(b_{1}, \dots, b_{r}) \neq 0$.
Let $S$ be a discrete valuation ring with maximal ideal $\mathfrak{P}$ such that $\mathcal{O}_{\mathfrak{p}} \subseteq S$ and
$S/\mathfrak{P} \cong \mathbb{F}$.
Since $A_{1},\dotsc,A_{r}$ is also an $S$-basis of $\Hom_{S\Lambda_{\mathfrak{p}}}(S X_{\mathfrak{p}}, S Y_{\mathfrak{p}})$,
Lemma \ref{lem:padichom}(e) applied to $S X_{\mathfrak{p}}$ and $S Y_{\mathfrak{p}}$
shows that $b_{1} A_{1} + \dotsb + b_{r} A_{r}$ is an $S\Lambda$-isomorphism $SX_{\mathfrak{p}} \to S Y_{\mathfrak{p}}$.
Hence $X_{\mathfrak{p}}$ and $Y_{\mathfrak{p}}$ are isomorphic as $\Lambda_{\mathfrak{p}}$-lattices
by \cite[(30.25)]{curtisandreiner_vol1}.
\end{proof}

To test whether or not $f$ is the zero-polynomial, we shall make use of the following classical result
(see \cite{Schwartz:1980:FPA:322217.322225,MR575692}).

\begin{theorem}[Schwartz--Zippel lemma]
Let $\mathbb{F}$ be a finite field. For a non-zero polynomial $g \in \mathbb{F}[T_{1},\dotsc,T_{r}]$
of total degree $n < \#\mathbb{F}$ we have
\[
\frac{ \# \{ (a_{1},\dotsc,a_{r}) \in \mathbb{F}^{r} \mid g(a_{1},\dotsc,a_{r}) = 0 \} }{\#\mathbb{F}^{r}} \leq \frac{n}{\#\mathbb{F}}.
\]
\end{theorem}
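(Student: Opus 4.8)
The plan is to prove the slightly sharper counting statement that for every $r \geq 1$ and every non-zero $g \in \mathbb{F}[T_{1},\dots,T_{r}]$ of total degree at most $n$, the number of zeros of $g$ in $\mathbb{F}^{r}$ is at most $n \cdot (\#\mathbb{F})^{r-1}$; dividing by $(\#\mathbb{F})^{r}$ then yields the bound in the statement. I would establish this by induction on $r$. For the base case $r=1$, the polynomial $g$ is a non-zero univariate polynomial of degree at most $n$ over the field $\mathbb{F}$, and hence has at most $n$ roots; this is the only genuinely field-theoretic input to the whole argument.

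For the inductive step, suppose $r \geq 2$. If $g$ is a non-zero constant it has no zeros and there is nothing to prove, so I may assume that some variable occurs in $g$; after relabelling, say $T_{r}$ occurs. Write $g = \sum_{i=0}^{k} g_{i}(T_{1},\dots,T_{r-1}) T_{r}^{i}$ with $k \geq 1$ and $g_{k} \neq 0$, and note that $g_{k}$ has total degree at most $n-k$. I would then split the zeros $(a_{1},\dots,a_{r}) \in \mathbb{F}^{r}$ of $g$ into two classes. If $g_{k}(a_{1},\dots,a_{r-1}) = 0$, then by the inductive hypothesis applied to $g_{k}$ there are at most $(n-k)(\#\mathbb{F})^{r-2}$ possibilities for the tuple $(a_{1},\dots,a_{r-1})$, and each of these gives rise to at most $\#\mathbb{F}$ zeros of $g$ (trivially, as there are only $\#\mathbb{F}$ choices of $a_{r}$), contributing at most $(n-k)(\#\mathbb{F})^{r-1}$ zeros in total. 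If instead $g_{k}(a_{1},\dots,a_{r-1}) \neq 0$, then for each of the at most $(\#\mathbb{F})^{r-1}$ such tuples the specialized univariate polynomial $g(a_{1},\dots,a_{r-1},T_{r})$ has degree exactly $k$, hence is non-zero and has at most $k$ roots $a_{r}$, contributing at most $k(\#\mathbb{F})^{r-1}$ zeros. Adding the two bounds gives at most $n(\#\mathbb{F})^{r-1}$ zeros, which completes the induction.

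There is no serious obstacle here; the only points requiring a little care are the bookkeeping of total degrees — recording that $\deg g_{k} \leq n-k$ so that the inductive hypothesis genuinely applies to $g_{k}$ — and the disposal of the degenerate cases in which $g$ is constant or $T_{r}$ does not occur. I would also remark that the hypothesis $n < \#\mathbb{F}$ is not actually used in this argument: it serves only to guarantee that the resulting bound $n/\#\mathbb{F}$ is non-trivial (i.e.\ strictly less than $1$), and it is in any case harmless to assume it.
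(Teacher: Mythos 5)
Your proof is correct: the induction on the number of variables, splitting the zeros according to whether the leading coefficient $g_{k}$ (with respect to the chosen variable) vanishes, is the standard argument for the Schwartz--Zippel lemma, and the degree bookkeeping ($\deg g_{k} \leq n-k$, so the two classes contribute at most $(n-k)(\#\mathbb{F})^{r-1}$ and $k(\#\mathbb{F})^{r-1}$ zeros respectively) is handled properly. The paper does not prove this statement at all --- it is quoted as a classical result with citations to Schwartz and Zippel --- and your argument is essentially the classical proof from those sources; your remark that the hypothesis $n < \#\mathbb{F}$ is not needed for the counting bound, but only to make the conclusion non-trivial, is also accurate.
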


Since $f = 0$ in $\mathbb{F}[T_{1},\dotsc,T_{r}]$ is equivalent to $f = 0$ in $\mathbb{F}'[T_{1},\dotsc,T_{r}]$
for every extension $\mathbb{F}'$ of $\mathbb{F}$, the condition on the total degree is not an actual restriction, as we can just extend scalars if necessary.
We now formulate a probabilistic version of the isomorphism test given in \S \ref{subsec:globhom}.
Let $1 > \varepsilon > 0$ be some chosen error bound.
The following algorithm to test whether $X_{\mathfrak{p}}$ and $Y_{\mathfrak{p}}$ are isomorphic as
$\Lambda_{\mathfrak{p}}$-lattices is of Monte Carlo type, in the sense described in step (iv):

\begin{enumerate}
\item
Compute an $\mathcal{O}_{\mathfrak{p}}$-basis $A_{1},\dotsc,A_{r}$ of 
$\Hom_{\Lambda_{\mathfrak{p}}}(X_{\mathfrak{p}}, Y_{\mathfrak{p}})$ as in \S \ref{subsec:globhom}.
\item
Set $f := \det(T_{1} \overline{A}_{1} + \dotsb + T_{r} \overline{A}_{r}) \in
(\mathcal{O}/\mathfrak{p})[T_{1},\dotsc,T_{r}]$.
\item
Choose $l, k \in \Z_{\geq 1}$ such that $\# (\mathcal{O}/\mathfrak{p})^{l} > n$ and $(n/\#(\mathcal{O}/\mathfrak{p})^{l})^{k} < \varepsilon$.
Let $\mathbb{F}$ be the degree $l$ extension of $\mathcal{O}/\mathfrak{p}$.
\item
Choose $v_{1},\dotsc,v_{k} \in \mathbb{F}^{r}$ uniformly distributed.
For every $i$ in the range $1 \leq i \leq k$ test whether $f(v_{i}) \neq 0$.
If such an $i$ exists then $X_{\mathfrak{p}}$ and $Y_{\mathfrak{p}}$ are isomorphic as $\Lambda_{\mathfrak{p}}$-lattices.
Otherwise, the probability that $X_{\mathfrak{p}}$ and $Y_{\mathfrak{p}}$ are not isomorphic as $\Lambda_{\mathfrak{p}}$-lattices
is at least $1-\varepsilon$.
\end{enumerate}

\begin{remark}\label{rmk:when-prob-method-is-useful}
While the Monte Carlo nature of this algorithm makes it useless if it needs to be shown that $X_{\mathfrak{p}}$ and $Y_{\mathfrak{p}}$ are not isomorphic, there are applications where it can significantly speed up computations.
Assume that we are given $\Lambda$-lattices $X,Y_{1},\dotsc,Y_{m}$ and we know that $X_{\mathfrak{p}}$ must be isomorphic to one 
of $Y_{1,\mathfrak{p}}, \dotsc, Y_{m,\mathfrak{p}}$
(for example, these could be representatives for the isomorphism classes of $\Lambda_{\mathfrak{p}}$-lattices).
Then using the probabilistic algorithm to test $X_{\mathfrak{p}} \cong Y_{i,\mathfrak{p}}$, $1 \leq i \leq m$, with some small $\varepsilon$, we can quickly find the $Y_{i,\mathfrak{p}}$ that is isomorphic to $X_{\mathfrak{p}}$
(it will not be necessary to prove directly that $X_{\mathfrak{p}}$ is not isomorphic to some specific $Y_{j,\mathfrak{p}}$). 
\end{remark}

\subsection{Reduction to testing whether a lattice localized at $\mathfrak{p}$ is free}\label{subsec:redn-to-local-freeness}
If $Y=\Lambda^{(k)}$ for some $k \in \Z_{\geq 1}$ then testing whether $X_{\mathfrak{p}}$ and $Y_{\mathfrak{p}}$
are isomorphic as $\Lambda_{\mathfrak{p}}$-lattices is of course equivalent to testing whether $X_{\mathfrak{p}}$
is free of rank $k$ over $\Lambda_{\mathfrak{p}}$.
An algorithm for computing a $\Lambda_{\mathfrak{p}}$-basis of $X_{\mathfrak{p}}$ (if one exists) is given by Bley and Wilson
in \cite[\S 4.2]{Bley2009}. 
The basic idea is to reduce modulo $\mathfrak{p}$ and then reduce again modulo the Jacobson radical of
$\Lambda_{\mathfrak{p}}/\mathfrak{p}\Lambda_{\mathfrak{p}}$, find a basis over the resulting associative algebra over a finite field
(if one exists), and then lift this basis using Nakayama's lemma (twice). 
If $X_{\mathfrak{p}}$ is free over $\Lambda_{\mathfrak{p}}$ then the lifted elements form a basis. 
Otherwise, either it was not possible to find a basis modulo the Jacobson radical of
$\Lambda_{\mathfrak{p}}/\mathfrak{p}\Lambda_{\mathfrak{p}}$, or the lifted elements do not form a basis.
Since it is straightforward to check whether a given set of elements forms a basis, this gives an algorithm to test
whether $X_{\mathfrak{p}}$ is free and compute a basis if so.
This method is in general faster than the others presented in this section because it does not require an expensive search step.

We now describe how to use this algorithm to give a general isomorphism testing algorithm for localized lattices. 
If one of $X_{\mathfrak{p}}$ and $Y_{\mathfrak{p}}$ is free over $\Lambda_{\mathfrak{p}}$ and the other is not,
then the above algorithm can be used to show that they are not isomorphic as $\Lambda_{\mathfrak{p}}$-lattices.
For the general case we use Proposition \ref{prop:hom-free-over-end} with $\Lambda=\Lambda_{\mathfrak{p}}$,
$X=X_{\mathfrak{p}}$ and $Y=Y_{\mathfrak{p}}$. 
This says that $X_{\mathfrak{p}}$ and $Y_{\mathfrak{p}}$ are isomorphic over $\Lambda_{\mathfrak{p}}$
if and only if the $\End_{\Lambda_{\mathfrak{p}}}(Y_{\mathfrak{p}})$-lattice
$\Hom_{\Lambda_{\mathfrak{p}}}(X_{\mathfrak{p}}, Y_{\mathfrak{p}})$ is free of rank $1$ and every free generator of
$\Hom_{\Lambda_{\mathfrak{p}}}(X_{\mathfrak{p}}, Y_{\mathfrak{p}})$
over $\End_{\Lambda_{\mathfrak{p}}}(Y_{\mathfrak{p}})$ is an isomorphism. 
Also note that any $f \in \Hom_{\Lambda_{\mathfrak{p}}}(X_{\mathfrak{p}}, Y_{\mathfrak{p}})$
is an $\Lambda_{\mathfrak{p}}$-isomorphism if and only if it is an $\mathcal{O}_{\mathfrak{p}}$-isomorphism.
To test whether $X_{\mathfrak{p}}$ and $Y_{\mathfrak{p}}$ are isomorphic as $\Lambda_{\mathfrak{p}}$-lattices,
and to compute an isomorphism if it exists, we can thus proceed as follows:

\begin{enumerate}
\item Compute $\Hom_{\Lambda_{\mathfrak{p}}}(X_{\mathfrak{p}}, Y_{\mathfrak{p}})$ and 
$\End_{\Lambda_{\mathfrak{p}}}(Y_{\mathfrak{p}})$ using the methods described in \S \ref{subsec:globhom}.
\item Use the algorithm of \cite[\S 4.2]{Bley2009} outlined above to check if 
$\Hom_{\Lambda_{\mathfrak{p}}}(X_{\mathfrak{p}}, Y_{\mathfrak{p}})$ is free over 
$\End_{\Lambda_{\mathfrak{p}}}(Y_{\mathfrak{p}})$ and compute a free generator $f$ if so. 
If $\Hom_{\Lambda_{\mathfrak{p}}}(X_{\mathfrak{p}}, Y_{\mathfrak{p}})$ is not free over 
$\End_{\Lambda_{\mathfrak{p}}}(Y_{\mathfrak{p}})$
then $X_{\mathfrak{p}}$ and $Y_{\mathfrak{p}}$ are not isomorphic as $\Lambda_{\mathfrak{p}}$-lattices.
\item Check whether $f:X_{\mathfrak{p}} \rightarrow Y_{\mathfrak{p}}$ is an $\mathcal{O}_{\mathfrak{p}}$-isomorphism.
If so, then it is an isomorphism of $\Lambda_{\mathfrak{p}}$-lattices. 
Otherwise, $X_{\mathfrak{p}}$ and $Y_{\mathfrak{p}}$ are not isomorphic as $\Lambda_{\mathfrak{p}}$-lattices.
\end{enumerate}

\subsection{Implementation of algorithms}\label{subsec:implementation}

Let $G$ be any finite group. The algorithms of \S \ref{subsec:globhom}, \S \ref{subsec:prob} and \S \ref{subsec:redn-to-local-freeness} have been implemented for $\Z[G]$-lattices $X$ and $Y$ contained in any finitely generated $\Q[G]$-modules $V$ and $W$. 
Explicitly, for a given rational prime $p$, these implementations check whether the localizations $X_{p}$ and $Y_{p}$
are isomorphic over $\Z_{(p)}[G]$.
Moreover, the implementations of the algorithms of \S \ref{subsec:globhom} and \S \ref{subsec:redn-to-local-freeness} compute an isomorphism if one exists. 
The \textsc{Magma} \cite{Wieb1997} code is available on the webpage of the first named author.

\begin{remark}
Isomorphism testing for localized lattices is one of the main ingredients
needed to compute a set of representatives of the isomorphism classes of full rank $\Lambda_{\mathfrak{p}}$-lattices of a fixed $A$-module $V$, or equivalently, of full rank $\Lambda_{\mathfrak{p}}$-sublattices of a fixed $\Lambda_{\mathfrak{p}}$-lattice.
The basic idea is to recursively compute maximal sublattices until no new
isomorphism class is found; this goes back to Plesken~\cite{Plesken1974}
(also see \cite[Algorithm 13.7]{Hofmann2016PhD}).
We have used this algorithm together with the algorithms described in this section to determine
a set of representatives of the isomorphism classes of 
full rank $\Z_{(2)}[A_{4}]$-lattices of $\Q[A_{4}]$, where $A_{4}$ is the alternating group on $4$ letters.
In total there are $163$ isomorphism classes, for which the algorithm requires approximately $84\,000$ isomorphism tests.
The average running times for a single isomorphism test using the algorithms
from \S \ref{subsec:globhom}, \S \ref{subsec:prob} (with error bound
$\varepsilon = 2^{-20}$) and \S \ref{subsec:redn-to-local-freeness} are 
$0.0500$ seconds, $0.0087$ seconds and $0.0063$ seconds, respectively.
All computations were performed using \textsc{Magma}~\cite{Wieb1997} V2.22-3 and a single core of a
Intel Xeon CPU E5-2643 v3 @ 3.40GHz.
\end{remark}

\section{Reducing the number of final tests}\label{sec:cut-number-of-tests}

The number of tests in step~(j) of Algorithm~\ref{alg:find-iso} can be enormous.
We now describe an ad hoc method similar to those outlined in \cite[\S 2]{Bley1997} and \cite[\S 7]{Bley2008}
to reduce the number of tests required.
For simplicity, we assume that we are in the case $K=\Q$ and $\mathcal{O}=\Z$.
The method described here generalizes to other cases, and besides, we can reduce to the case $K=\Q$ without loss of generality because 
determining whether $X$ and $Y$ are isomorphic as $\Lambda$-lattices does not depend on whether we view $A$ as a $K$-algebra or a $\Q$-algebra
(though there may be a trade-off in computational cost).

The idea is based on the following simple observation.
Let $d \in \Z_{\geq 1}$ be the $\Z$-rank of $X$ and $Y$ and let $\Omega_{X}$ and $\Omega_{Y}$ be $\Z$-bases of $\mathcal{M}X$ and $\mathcal{M}Y$, respectively.
Denote by $M_{X}, M_{Y} \in \Mat_{d \times d}(\Z)$ basis matrices of $X$ and $Y$ with respect to $\Omega_{X}$ and $\Omega_{Y}$.
Now if $h \colon \mathcal{M} X \to \mathcal{M}Y$ is any $\Z$-linear map with basis matrix $M$ with respect to $\Omega_{X}$ and $\Omega_{Y}$, then $h(X) \subseteq Y$ if and only if $M_{X} M M_{Y}^{-1} \in \Mat_{d \times d}(\Z)$.
We will show how appropriate choices of bases and basis matrices can help us to reduce the number of tests in step~(j).

Recall from \S \ref{subsec:res-iso-max} that we have a decomposition $\mathcal{M} = \mathcal{M}_{1} \oplus \dotsb \oplus \mathcal{M}_{r}$
which induces decompositions 
\begin{align}\label{eq:dec}
\mathcal{M}X = \mathcal{M}_{1} X \oplus \dotsb \oplus \mathcal{M}_{r}X
\quad \textrm{ and } \quad
\mathcal{M}Y = \mathcal{M}_{1} Y \oplus \dotsb \oplus \mathcal{M}_{r}Y.
\end{align}
From the previous steps of Algorithm~\ref{alg:find-iso}, 
we have isomorphisms $f_{i} \colon \mathcal{M}_{i} X \to \mathcal{M}_{i}Y$ of $\mathcal{M}_{i}$-lattices
and finite subsets $U_{i} \subseteq \Aut_{\mathcal{M}_{i}}(\mathcal{M}_{i} Y)$ for $1 \leq i \leq r$.
We need to check whether there exists $(g_{1},\dotsc,g_{r}) \in U_{1} \times \dotsb \times U_{r}$ such that $(\sum_{i=1}^{r} (g_{i} \circ f_{i})))(X) \subseteq Y$.

Let $d_i \in \Z_{\geq 1}$ denote the $\Z$-rank of $\mathcal{M}_{i}X$, which is also equal to the $\Z$-rank of $\mathcal{M}_{i} Y$.
Now we choose $\Z$-bases $\Omega_{X}$ and $\Omega_{Y}$ of $\mathcal{M}X$ and $\mathcal{M}Y$ adapted to the decompositions in~(\ref{eq:dec}).
For isomorphisms $f_{i} \colon \mathcal{M}_{i} X \to \mathcal{M}_{i} Y$ the matrix representing $\sum_{i=1}^{r}f_{i}$ is a block matrix of the form
$\operatorname{diag}(M(f_{1}),\dotsc,M(f_{r}))$ with $M(f_{i}) \in \Mat_{d_{i} \times d_{i}}(\Z)$.
Similarly, for automorphisms $g_{i} \colon \mathcal{M}_{i} Y \to \mathcal{M}_{i} Y$ the matrix representing $\sum_{i=1}^{r} g_{i}$ is a block diagonal matrix
of the form $\operatorname{diag}(M(g_{1}), \dotsc, M(g_{r}))$ with $M(g_{i}) \in \GL_{d_{i}}(\Z)$.
Hence with respect to $\Omega_{X}$ and $\Omega_{Y}$, the morphism $h := \sum_{i=1}^{r} g_{i} \circ f_{i}$ is given by
\[
M = \operatorname{diag}(M(g_{1})M(f_{1}), \dotsc,M(g_{r})M(f_{r})).
\]
Now let $M_{X}$ and $M_{Y}$ be upper triangular basis matrices of $X$ and $Y$ with respect to $\Omega_{X}$ and $\Omega_{Y}$. 
Then $h$ satisfies $h(X) \subseteq Y$ if and only if $M_{X} M M_{Y}^{-1} \in \Mat_{d \times d}(\Z)$.
Setting $\tilde{M}_{Y} := M_{Y}^{-1} \in \GL_{d}(\Q)$, the matrix $M_{X} M M_{Y}^{-1}$ is equal to
\[
\small
\left(
\begin{array}{ccccc}
\rright{M_{X}^{(1)}} & & \ast                                    \\ \cline{1-1}
  &                \ddots                \\ \cline{3-3}
  \text{0} & &    \lleft{M_{X}^{(r)}}            \\
\end{array}
\right)
\left(
\begin{array}{ccccc}
\rright{M(g_{1})M(f_{1})} & & 0                                    \\ \cline{1-1}
  &                \ddots                \\ \cline{3-3}
  \text{0} & &    \lleft{M(g_{r})M(f_{r})}            \\
\end{array}
\right)
 \left(
\begin{array}{ccccc}
\rright{\tilde M_{Y}^{(1)}} & & \ast                                    \\ \cline{1-1}
  &                \ddots                \\ \cline{3-3}
  \text{0} & &    \lleft{\tilde M_{Y}^{(r)}}            \\
\end{array}
\right),
\]
with $M_{X}^{(i)} \in \Mat_{d_{i} \times d_{i}}(\Z)$, $\tilde{M}_{Y}^{(i)} \in \GL_{d_{i}}(\Q)$ for $i = 1,\dotsc,r$.
As this product of matrices is equal to 
\[     \left(
\begin{array}{ccccc}
\rright{M_X^{(1)}M(g_1)M(f_1)\tilde M_Y^{(1)}} & & \ast                                    \\ \cline{1-1}
  &                \ddots                \\ \cline{3-3}
  \text{0} & &    \lleft{M_X^{(r)}M(g_r)M(f_r) \tilde M_Y^{(r)}}            \\
\end{array}
\right),
\]
we see that the $i$th block on the diagonal depends only on $g_{i}$ and is independent of $g_{j}$ for $j \neq i$.
In particular, if for example we find $g_{1} \in U_{1}$ such that 
\[
M_{X}^{(1)}M(g_{1})M(f_{1})\tilde{M}_{Y}^{(1)} \notin \Mat_{d_{1} \times d_{1}}(\Z),
\]
then we can remove all elements $\{ g_{1} \} \times U_{2} \times \dotsb \times U_{r}$ from the search space.

\section{Computational results}\label{sec:exp}

We have a proof of concept implementation of Algorithm \ref{alg:find-iso} in \textsc{Magma}~\cite{Wieb1997}, which works in the following situation.
Let $G$ be a finite group, let $A=\Q[G]$ and let $\Lambda=\Z[G]$.
Let $A=A_{1} \oplus \cdots \oplus A_{r}$ be the decomposition of $A$ into indecomposable two-sided ideals and 
let $K_{i}$ denote the center of the simple algebra $A_{i}$.
For each $i$ there is an isomorphism $A_{i} \cong \Mat_{n_{i} \times n_{i}}(D_{i})$ of $\Q$-algebras,
where $D_{i}$ is a skew field with center $K_{i}$.
Suppose that for each $i$, at least one of the following holds: (a) $D_{i}=\Q$, (b) $A_{i}=D_{i}=K_{i}$ (so in particular $n_{i}=1$), or
(c) $D_{i}$ is a quaternion algebra and $n_{i}=1$ (that is, $A_{i}=D_{i}$ is a skew field and $[D_{i}:\Q]=4$).
This condition holds in each of the following cases:
\renewcommand{\labelenumi}{(\roman{enumi})}
\begin{enumerate}
\item $G$ is abelian;
\item $G=S_{n}$, the symmetric group on $n$ letters;
\item $G=\F_{q} \rtimes \F_{q}^{\times}$, where $\F_{q}$ is the finite field with $q \geq 3$ elements and the semidirect product is defined by the natural action (such a group has a unique non-linear irreducible character, which is rationally represented); 
\item $G=Q_{8}, Q_{12}, Q_{8} \times C_{2}$ or $Q_{12} \times C_{2}$, where $Q_{4n}$ is the quaternion group of order $4n$ and $C_{2}$ is the cyclic group of order $2$.
\end{enumerate}
The implementation can decide whether two $\Z[G]$-lattices contained in $\Q[G]$ are isomorphic and, if so, give an explicit isomorphism.
In practice, the number of final tests required for step (j) of Algorithm \ref{alg:find-iso} is too high in many cases 
(when $G=Q_{12} \times C_{2}$, for example), even if the methods of \S \ref{sec:cut-number-of-tests} are employed.
The code is available on the webpage of the first named author.

The implementation can be used to investigate the Galois module structure of arithmetic objects such as the rings of integers 
and ambiguous ideals of Galois extensions $K/\Q$ with $\Gal(K/\Q) \cong G$.
We note that it is straightforward to realise these lattices as lattices in $\Q[G]$ by finding a normal basis generator of $K/\Q$, which can be done in several ways; for example, one can use the algorithm of Girstmair \cite{MR1706933}.

\subsection{Galois $G$-extensions}\label{subsec:Galois-G-extns}

Let $K$ be a number field and let $G$ be a finite group.
We fix a $G$-extension of $K$, that is, a pair $(L, \varphi)$
consisting of a Galois extension $L/K$ together with a group isomorphism $\varphi \colon G \to \Gal(L/K)$.
In this way one obtains an action of $G$ on $L$.
The classical Normal Basis Theorem implies that $(L,\varphi) \cong K[G]$ as $K[G]$-modules.
Since $\mathcal{O}_{L}$ is a $\Gal(L/K)$-invariant finitely generated torsion-free $\mathcal{O}_{K}$-module, 
the pair $(\mathcal{O}_{L}, \varphi)$ uniquely defines a $\mathcal{O}_{K}[G]$-lattice in $L$.
It is straightforward to see that if $(\mathcal{O}_{L},\varphi)$ is free (resp.\ stably free, locally free) then $(\mathcal{O}_{L},\psi)$ is also free
(resp.\ stably free, locally free) for any choice of isomorphism $\psi : G \rightarrow \Gal(L/K)$.

Henceforth assume that $L/K$ is (at most) tamely ramified. 
Then it is well known that $(\mathcal{O}_{L},\varphi)$ is a locally free $\mathcal{O}_{K}[G]$-lattice of rank $1$
(see \cite{Noether1932}, \cite[I, \S 3]{MR717033} or \cite{MR825142}).
Let $LF_{1}(\mathcal{O}_{K}[G])$ denote the set of isomorphism classes of locally free $\mathcal{O}_{K}[G]$-lattices of rank $1$
and let $[\mathcal{O}_{L}^{\varphi}] \in LF_{1}(\mathcal{O}_{K}[G])$ denote the isomorphism class of $(\mathcal{O}_{L},\varphi)$.
An automorphism $\theta \in \Aut(G)$ induces an action on $LF_{1}(\mathcal{O}_{K}[G])$ such that in particular
$\theta \cdot [ \mathcal{O}_{L}^{\varphi} ] = [ \mathcal{O}_{L}^{\varphi \circ \theta}]$.
We define
\[
\{ [\mathcal{O}_{L}] \} := \{ [\mathcal{O}_{L}^{\psi}] \in LF_{1}(\mathcal{O}_{K}[G]) \mid \psi : G \rightarrow \Gal(L/K) \textrm{ is an isomorphism} \}.
\]
Since any two isomorphisms $G \rightarrow \Gal(L/K)$ differ by an element of $\Aut(G)$, we see that $\{ [\mathcal{O}_{L}] \}$
is the $\Aut(G)$-orbit of $[\mathcal{O}_{L}^{\varphi}]$ in $LF_{1}(\mathcal{O}_{K}[G])$, and this is independent of the choice of $\varphi$.
For $\sigma \in \Gal(L/K)$ let $\iota_{\sigma}$ denote the inner automorphism of $\Gal(L/K)$ defined by $\tau \mapsto \sigma \tau \sigma^{-1}$.
Then it is straightforward to check that the map $\sigma: (\mathcal{O}_{L},\varphi) \rightarrow (\mathcal{O}_{L}, \iota_{\sigma} \circ \varphi)$
is an isomorphism of $\mathcal{O}_{K}[G]$-lattices. 
Thus the action of $\Aut(G)$ on $\{ [\mathcal{O}_{L}] \}$ factors through $\Out(G):=\Aut(G)/\Inn(G)$ where $\Inn(G)$
is the normal subgroup of $\Aut(G)$ consisting of inner automorphisms.

\subsection{Rings of integers of $Q_{8} \times C_{2}$-extensions}\label{subsec:exp1}
Let $G = Q_{8} \times C_{2}$, the direct product of the quaternion group of order $8$ and the cyclic group of order $2$.
Swan \cite{Swan1983} showed there exist $\Z[G]$-lattices that are stably free but not free, 
but that for any group $H$ with $|H|<16$, every stably free $\Z[H]$-lattice is in fact free.
He also showed that $|LF_{1}(\Z[G])|=40$ and that there are $4$ classes in $LF_{1}(\Z[G])$ that are stably free.
We label these classes $\mathcal{C}_{1}, \mathcal{C}_{2}, \mathcal{C}_{3}, \mathcal{C}_{4}$, where
$\mathcal{C}_{1}$ is the class of free $\Z[G]$-lattices of rank $1$ and the lattices contained in the other classes are stably free  but not free.
(Note that the labelling of $\mathcal{C}_{2},\mathcal{C}_{3}$ and $\mathcal{C}_{4}$ is arbitrary,
but the key point is that it will be fixed for the rest of this discussion.)
Based upon these results, Cougnard \cite{cougnard-H8-C2} showed that for each $1 \leq i \leq 4$,
there exist infinitely many $G$-extensions $(L, \varphi)$ of $\Q$ with $[ \mathcal{O}_{L}^{\varphi}] = \mathcal{C}_{i}$.
The group of automorphisms $\Aut(G)$ has order 192 and the quotient group of outer automorphisms $\Out(G)$ has order $48$.
Using either our implementation of Algorithm \ref{alg:find-iso} or the description and discussion of $LF_{1}(\Z[G])$ in \cite[\S 16]{Swan1983},
it can be shown that the action of $\Aut(G)$ on $\{ \mathcal{C}_{1},\mathcal{C}_{2},\mathcal{C}_{3},\mathcal{C}_{4} \}$ has two orbits,
namely $\{ \mathcal{C}_{1} \}$ and $\{ \mathcal{C}_{2}, \mathcal{C}_{3}, \mathcal{C}_{4} \}$.

We now describe how we used our implementation of  Algorithm \ref{alg:find-iso} to verify the numerical examples considered
by Cougnard.
Let $N/\Q$ be a finite Galois extension with $\Gal(N/\Q) \cong Q_{8}$ and fix a choice of 
isomorphism $\varphi \colon Q_{8} \to \Gal(N/\Q)$.
Let $d \in \Q$ such that $\sqrt{d} \notin N$. Then $N(\sqrt{d})/\Q$ is Galois and there is a canonical identification
\begin{equation}\label{eq:galois-direct-prod}
\Gal(N(\sqrt{d})/\Q) = \Gal(N/\Q) \times \Gal(\Q(\sqrt{d})/\Q). 
\end{equation}
Moreover, since $\Aut(C_{2})$ is trivial there is a unique isomorphism
$\theta_{d} \colon C_{2} \rightarrow \Gal(\Q(\sqrt{d})/\Q)$.
Using \eqref{eq:galois-direct-prod} we define an isomorphism 
$\varphi_{d} \colon Q_{8} \times C_{2} \longrightarrow \Gal(N(\sqrt d)/\Q)$ by $\varphi_{d} = \varphi \times \theta_{d}$
(this is the unique isomorphism whose restriction back to the first factor recovers $\varphi$).
Let $[\mathcal{O}_{N(\sqrt{d})}]$ denote the $\Z[G]$-isomorphism class of $(\mathcal{O}_{N(\sqrt{d})}, \varphi_{d})$.
It is interesting to compare such isomorphism classes as $d$ varies while keeping $N$ and $\varphi$ fixed.

In \cite[VIII]{cougnard-H8-C2}, Cougnard considered the number field $N_{1}$ with defining polynomial
\begin{multline*}
  \mathtt{x^8 - x^7 + 62126x^6 - 565081x^5 + 1060385071x^4 - 16366741325x^3}\\
  \mathtt{ + 465279400700x^2 + 7092550941085x + 160472449673155 \in \Q[x]}.
\end{multline*}
We were able to compute the following isomorphism classes of rings of integers: 
\renewcommand{\labelenumi}{(\roman{enumi})}
\begin{enumerate}
\item $\mathcal{C}_{1} = [ \mathcal{O}_{N_{1}(\sqrt{5})} ] = [ \mathcal{O}_{N_{1}(\sqrt{221})}]$, 
\item $\mathcal{C}_{2} = [ \mathcal{O}_{N_{1}(\sqrt{17})}]$,
\item $\mathcal{C}_{3} = [ \mathcal{O}_{N_{1}(\sqrt{13})} ] = [ \mathcal{O}_{N_{1}(\sqrt{21})} ]  = [ \mathcal{O}_{N_{1}(\sqrt{65})} ]$,
\item $\mathcal{C}_{4} = [ \mathcal{O}_{N_{1}(\sqrt{85})} ]$. 
\end{enumerate}
In case (i), our implementation of Algorithm \ref{alg:find-iso} yielded explicit $\Z[G]$-isomorphisms of the rings of integers with $\Z[G]$
and so we obtained explicit normal integral bases
(as the coefficients of the elements generating the normal integral bases are quite large, we do not reproduce them here).
The implementation was also used to check whether any two of the rings of integers listed above are isomorphic or not as $\Z[G]$-lattices,
and thus verified that the isomorphism classes listed above are indeed distinct.
We used two independent methods to check that all the rings of integers above are stably free
and thus do in fact belong to the isomorphism classes $\mathcal{C}_{2}, \mathcal{C}_{3}$ and $\mathcal{C}_{4}$ in cases (ii), (iii) and (iv)
(recall the labelling of these classes is arbitrary but fixed).
Note that a locally free $\Z[G]$-lattice of rank $1$ is stably free if and only if it has trivial class in the 
locally free class group $\Cl(\Z[G])$.
The first method was to use an algorithm of Bley and Wilson \cite{Bley2009} that solves the discrete logarithm problem in $\Cl(\Z[G])$.
This algorithm was implemented in \textsc{Magma}~\cite{Wieb1997} by Bley and the code is available on his website.
The second method was to use the remarkable work of Fr\"ohlich and Taylor that determines
the class in $\Cl(\Z[G])$ of the ring of integers of a tamely ramified $G$-extension 
in terms of the Artin root numbers of the irreducible symplectic characters of $G$ (see \cite[I, \S 6]{MR717033}).
We used the \textsc{Magma} command \texttt{RootNumber} to show that these root numbers are $1$ in all the cases above, which implies that the
classes in $\Cl(\Z[G])$ are trivial.
Therefore the results above are in agreement with those of Cougnard (he also considered similar situations starting with fields other than $N_{1}$,
but we do not consider these here).

Note that once a single representative of each isomorphism class has been found, 
our implementation of Algorithm \ref{alg:find-iso} can be used to check whether any given 
locally free  $\Z[G]$-lattice of rank $1$ is stably free or not and so 
one does not need to apply either of the two methods described above when investigating further examples. 
Moreover, in principle one can use Algorithm \ref{alg:find-iso} to check whether a ring of integers is stably free over $\Z[G]$
without using the above methods at all. Let $(L,\varphi)$ be a $G$-extension of $\Q$. 
Then the Bass Cancellation Theorem \cite[(41.20)]{curtisandreiner_vol2} shows that $(\mathcal{O}_{L}, \varphi)$ is stably free over $\Z[G]$
if and only if $(\mathcal{O}_{L},\varphi) \oplus \Z[G] \cong \Z[G] \oplus \Z[G]$ as $\Z[G]$-modules (this is independent of the choice of $\varphi$).
This of course can be checked by Algorithm \ref{alg:find-iso}, but unfortunately our implementation is restricted to locally free $\Z[G]$-lattices in $\Q[G]$.

We have used our implementation of Algorithm \ref{alg:find-iso} to investigate the distribution of the Galois module structure of 
the rings of integers among all tamely ramified Galois extensions $L/\Q$ with $\Gal(L/\Q) \cong G$ and
$\lvert \disc(\mathcal{O}_{L}) \rvert \leq 10^{40}$. 
(Note that the fields considered in the previous paragraph all satisfy $\lvert \disc(\mathcal{O}_{N_{1}(\sqrt{d})}) \rvert \geq 10^{64}$
because $\lvert \disc(\mathcal{O}_{N_{1}}) \rvert \geq 10^{32}$.)
Since any such field $L$ is a tamely ramified quadratic extension of a tamely ramified Galois extension
$L_{0}/\Q$ with $\Gal(L_{0}/\Q) \cong Q_{8}$ and $\lvert \disc(\mathcal{O}_{L_{0}}) \rvert \leq 10^{20}$, 
we first used algorithms based on class field theory described in~\cite{FHS2019} and implemented in \textsc{Hecke}~\cite{FHHJ2017} to construct all the possible $L_{0}$
(there are 235 such fields).
We then used the same techniques to build appropriate quadratic extensions of these fields.
In total there are $315$ extensions $L/\Q$ with the desired properties (one needs to take care to discard duplicates).
In order to avoid the dependence on the choice of isomorphism $\varphi: G \rightarrow \Gal(L/\Q)$,
we only determine whether $[\mathcal{O}_{L}^{\varphi}]$ lies in $\mathcal{C}_{1}$, in $\mathcal{C}_{2} \cup \mathcal{C}_{3} \cup \mathcal{C}_{4}$,
or in neither of these. In other words, we determine whether $\mathcal{O}_{L}$ is free, stably free but not free or not stably free over $\Z[\Gal(L/\Q)]$.
Of the $315$ rings of integers $\mathcal{O}_{L}$ under consideration, $80$ are free, $1$ is stably free but not free and $234$ are not stably free.
The one stably free but not free example is the ring of integers of the number field $L$ with defining polynomial
\begin{multline*}
\mathtt{x^{16} + 11x^{15} - 603x^{14} - 3827x^{13} + 145692x^{12} + 266691x^{11} - 16993778x^{10} + 30104389x^{9}} \\
\mathtt{+ 898058760x^{8} - 4356130039x^{7} - 11656785671x^{6} + 135624739908x^{5} - 369009691593x^{4}} \\
\mathtt{+ 364395270692x^{3} + 8335437012x^{2} - 166048630160x + 22344148336 \in \Q[x]}
\end{multline*}
and discriminant
\[ 
\mathtt{
9486970677311569898939510744199462890625 = 3^{12} \cdot 5^{12} \cdot 11^{12} \cdot 13^{12}.
}
\]
Since our table of number fields is complete with respect to the given absolute discriminant bound,
$L$ is in fact the number field of smallest absolute discriminant with the property that $L/\Q$ is Galois with $\Gal(L/\Q) \cong G$
and that $\mathcal{O}_{L}$ is stably free but not free over $\Z[\Gal(L/\Q)]$ (note that this forces $L/\Q$ to be tamely ramified).

\subsection{Ambiguous ideals}
We first recall some general properties of ambiguous ideals following Ullom \cite[Chapter I]{Ullom1969}.
Let $K$ be a number field and let $G$ be a finite group.
Let $(L,\varphi)$ be a tamely ramified $G$-extension of $K$ and let $\mathfrak{a}$ be an ambiguous ideal of $\mathcal{O}_{L}$,
that is, an ideal that is invariant under the action of $G$ (note that this property does not depend on the choice of $\varphi$).
Then $(\mathfrak{a},\varphi)$ uniquely defines an $\mathcal{O}_{K}[G]$-lattice in $L$.
Since $L/K$ is tamely ramified, $(\mathfrak{a},\varphi)$ is locally free 
and the observations made in \S \ref{subsec:Galois-G-extns} also apply in this setting.
For a maximal ideal $\mathfrak{p}$ of $\mathcal{O}_{K}$ decomposing as
$\mathfrak{p} \mathcal{O}_{L} = (\mathfrak{P}_{1} \dotsm \mathfrak{P}_{g})^{e}$ we set
$\psi(\mathfrak{p}) = \mathfrak{P}_{1} \dotsm \mathfrak{P}_{g}$.
We have the following classification.
\begin{itemize}
\item
The ideal $\psi(\mathfrak{p})$ is ambiguous and the set $\{\psi(\mathfrak{p}) \mid \text{$\mathfrak{p}$ a maximal ideal of
$\mathcal{O}_{K}$}\}$
is a free $\Z$-basis of the abelian group of ambiguous ideals of $\mathcal{O}_{L}$.
\item
Every ambiguous ideal $\mathfrak{a}$ of $\mathcal{O}_{L}$ can be uniquely written in the form
$\mathfrak{a} = \mathfrak{a}_{0} \mathfrak{b}$, with $\mathfrak{b}$ an ideal of $\mathcal{O}_{K}$ and
\[
\mathfrak{a}_{0} = \psi(\mathfrak{p}_{1})^{a_{1}} \dotsm \psi(\mathfrak{p}_{t})^{a_{t}}, \quad 0 \leq a_{i} < e_{i},
\]
where $e_{i} > 1$ is the ramification index of a maximal ideal of $\OL$ dividing $\mathfrak{p}_{i}$.
The ideal $\mathfrak{a}_{0}$ is called a primitive ambiguous ideal.
\end{itemize}
If $K=\Q$ then $\mathcal{O}_{K} = \Z$ is a principal ideal domain and so every ambiguous ideal
$\mathfrak{a}_{0} \mathfrak{b}$ with $\mathfrak{a}_{0}$ primitive and $\mathfrak{b}$ an ideal of $\Z$
is isomorphic to $\mathfrak{a}_{0}$ as a $\Z[\Gal(L/\Q)]$-module.
Thus when investigating the possible Galois module structure of ambiguous ideals in this situation, we can restrict to primitive ambiguous ideals.

In the sequel, we extend $\psi$ to all non-zero fractional ideals of $\mathcal{O}_{K}$ so that 
if $a_{1}, \ldots, a_{t} \in \Z$ and $\mathfrak{p}_{1}, \ldots, \mathfrak{p}_{t}$ are maximal ideals of $\mathcal{O}_{K}$ then
$\psi(\mathfrak{p}_{1}^{a_{1}} \dotsm \mathfrak{p}_{t}^{a_{t}})=\psi(\mathfrak{p}_{1})^{a_{1}} \dotsm \psi(\mathfrak{p}_{t})^{a_{t}}$.

\subsection{Ambiguous ideals for a fixed $Q_{8} \times C_{2}$-extension}

We now specialise to the case $G = Q_{8} \times C_{2}$ and $K = \Q$ as in \S \ref{subsec:exp1}.
Let $N_{1}$ denote the extension of $\Q$ defined in \S \ref{subsec:exp1}.
Let $L_{1} = N_{1}(\sqrt{5})$ and note that $L_{1}/\Q$ is a Galois extension with $\Gal(L_{1}/\Q) \cong G$.
Since the discriminant of $\mathcal{O}_{L_{1}}$ is
\[
\mathtt{3^{12} \cdot 5^{12} \cdot 7^{12} \cdot 11^{12} \cdot 13^{12} \cdot 17^{12}},
\]
the extension $L_{1}/\Q$ is tamely ramified and thus every ambiguous ideal of $\mathcal{O}_{L_{1}}$ is a locally free
$\Z[G]$-lattice.
Moreover, as the ramification indices of the rational primes dividing the discriminant are all equal to $4$, there are $4^{6} = 4096$
primitive ambiguous ideals. In \cite[VIII]{cougnard-H8-C2}, Cougnard showed that $\mathcal{O}_{L_{1}}$ is a free 
$\Z[\Gal(L_{1}/\Q)]$-lattice.
We verified this result with our implementation of Algorithm \ref{alg:find-iso} and also investigated the $\Z[\Gal(L_{1}/\Q)]$-structure of all the primitive ambiguous ideals:
$1024$ are free, $1024$ are stably free but not free and $2048$ are not stably free.
More precisely, for a fixed choice of isomorphism $\varphi \colon G \rightarrow \Gal(L_{1}/\Q)$, all $1024$ stably free but not free primitive ambiguous ideals lie in the same $\Z[G]$-isomorphism class.
Examples of free, stably free but not free and not stably free ambiguous ideals are $\psi(17 \Z)$, $\psi(17^{2} \Z)$ and $\psi(11^{2} \Z)$, respectively.

Now let $L_{2} = N_{1}(\sqrt{221})$. Again, the extension $L_{2}/\Q$ is Galois with $\Gal(L_{2}/\Q) \cong G$.
Since $\mathcal{O}_{L_{2}}$ has the same discriminant as $\mathcal{O}_{L_{1}}$, the extension $L_{2}/\Q$ is also tamely ramified and
there are $4096$ primitive ambiguous ideals whose $\Z[\Gal(L_{2}/\Q)]$-structure is as follows:
$512$ are free, $1536$ are stably free but not free and $2048$ are not stably free
(in particular, $\mathcal{O}_{L_{2}}$ is free, as was first shown by Cougnard \cite[VIII]{cougnard-H8-C2}).
Moreover, for a fixed choice of isomorphism $\varphi \colon G \rightarrow \Gal(L_{2}/\Q)$,
the $1536=3 \cdot 512$ stably free but not free primitive ambiguous ideals are equally distributed among the three isomorphism classes of stably free but not free $\Z[G]$-lattices of rank $1$.
The ambiguous primitive ideals $\psi(5^{2}17^{2}\Z)$, $\psi(5^{2}\Z)$ and $\psi(7^{2}11^{2}\Z)$ are pairwise non-isomorphic 
(for a fixed $\varphi$) and stably free but not free. Moreover, $\psi(11^{2}\Z)$ is not stably free and $\psi(17 \Z)$ is free but not equal to $\mathcal{O}_{L_{1}}$.

\bibliography{locisoiso}
\bibliographystyle{amsalpha}

\end{document}